\theoremstyle{plain}
\newtheorem{lem}{Lemma}[section]
\newtheorem{cor}[lem]{Corollary}
\newtheorem{prop}[lem]{Proposition}
\newtheorem{thm}[lem]{Theorem}
\newtheorem*{tthmm}{Theorem}
\theoremstyle{definition}
\newtheorem{rem}[lem]{Remark}
\newtheorem{dfn}[lem]{Definition}
\newtheorem{defs}[lem]{Definition}
\renewcommand{\phi}{\varphi}
\renewcommand{\leq}{\leqslant}
\renewcommand{\geq}{\geqslant}
\renewcommand{\epsilon}{\varepsilon}
\renewcommand{\kappa}{\varkappa}
\DeclareMathOperator{\spec}{Spec}
\DeclareMathOperator{\proj}{proj}
\DeclareMathOperator{\Sets}{Sets} 
\DeclareMathOperator{\mot}{mot}
 \DeclareMathOperator{\cof}{cof}
 \DeclareMathOperator{\Ab}{Ab}
 \DeclareMathOperator{\nis}{\mathsf{nis}}
\newcommand{\cc}{\mathcal}
\newcommand{\bb}{\mathbb}
\newcommand{\wh}{\widehat}
\newcommand{\SH}{SH}
\newcommand{\eff}{\mathsf{eff}}
\newcommand{\veff}{\mathsf{veff}}
\newcommand{\fr}{\mathsf{fr}}
\newcommand{\pt}{\mathsf{pt}}
\newcommand{\ev}{\mathsf{ev}}
\newcommand{\Shv}{\mathsf{Shv}}
\newcommand{\Cor}{\mathcal{A}}
\newcommand{\Smk}{\mathsf{Sm}_k}
\newcommand{\Psh}{\mathsf{Psh}}
\newcommand{\Sm}{\cc Sm}
\newcommand{\ShvA}{\Shv(\Cor)}
\newcommand{\PshA}{\Psh(\Cor)}
\newcommand{\Set}{\mathrm{Set}}
\newcommand{\Gmn}[1]{\mathbb{G}_m^{\wedge #1}}
\newcommand{\inthom}[3]{\underline{\mathrm{Hom}}_{#1}(#2,#3)}
\newcommand{\semilocalsimplex}{\wh{\Delta}^{\bullet}_{K/k}}
\newcommand{\Gm}{\mathbb{G}_m}
\newcommand{\colim}[1]{\underset{#1}{\mathsf{colim}}}
\newcommand{\Ch}{\mathsf{Ch}}
\newcommand{\Sp}{\mathsf{Sp}}
\newcommand{\SpGm}{\Sp_{\Gm}}
\newcommand{\EM}{\mathrm{EM}}
\newcommand{\motive}[1]{M_{\Cor}^{\Gm}(#1)}
\newcommand{\otimesDK}{\underset{DK}{\otimes}}
\newcommand{\GammaSpc}{\Gamma\mathrm{Spc}^{sp}}
\newcommand{\mathscr}{\cc}
\begin{document}

\footskip30pt

\baselineskip=1.1\baselineskip

\title{Rational Enriched Motivic Spaces}
\address{Department of Mathematics, Swansea University, Fabian Way, Swansea SA1 8EN, UK}
\email{peterjbonart@gmail.com}

\author{Peter Bonart}

\begin{abstract}
Enriched motivic $\Cor$-spaces are introduced and studied in this paper, where $\cc A$ is an
additive category of correspondences. They are linear counterparts of motivic $\Gamma$-spaces.
It is shown that rational special enriched motivic $\widetilde{\mathrm{Cor}}$-spaces recover connective motivic
bispectra with rational coefficients, where $\widetilde{\mathrm{Cor}}$ is the category of Milnor--Witt correspondences.
\end{abstract}

\keywords{Rational motivic stable homotopy theory, motivic $\Gamma$-spaces, enriched category theory}
\subjclass[2010]{14F42, 18D20, 55P42}
\thanks{Supported by the Swansea Science Doctoral Training Partnerships, and the Engineering
and Physical Sciences Research Council (Project Reference: 2484592)}

\maketitle
\thispagestyle{empty}
\pagestyle{plain}

\tableofcontents

\section{Introduction}

In his celebrated paper~\cite{S} Segal introduced $\Gamma$-spaces and showed that they yield
infinite loop spaces. In~\cite{BFgamma} Bousfield and Friedlander defined a model category structure for $\Gamma$-spaces
and showed that its homotopy category recovers connective $S^1$-spectra. They also showed that fibrant
objects in this model category are given by very special $\Gamma$-spaces.

Garkusha, Panin and \O stv\ae r~\cite{garkusha2019framed} have recently introduced and studied motivic $\Gamma$-spaces.
They are $\cc M$-enriched functors in two variables
   $$\cc X:\Gamma^{op}\boxtimes\Smk{}_{,+}\to\cc M,$$
where $\cc M$ is the category of pointed motivic spaces and $\Smk{}_{,+}$ is the $\cc M$-category of framed
correspondences of level 0. Special and very special motivic $\Gamma$-spaces are defined in~\cite{garkusha2019framed}
as $\cc M$-enriched functors 
   $$\cc X:\Gamma^{op}\boxtimes\Smk{}_{,+}\to\cc M^{\fr}$$
satisfying several axioms, where $\cc M^{\fr}$ is the $\cc M$-category of pointed motivic spaces with framed correspondences. The axioms 
are a combination of Segal's axioms and axioms reflecting basic properties of framed motives of algebraic varieties in the sense 
of Garkusha--Panin~\cite{garkusha2021framedmotives} (see~\cite{garkusha2019framed} for details).

Inspired by~\cite{garkusha2019framed} we introduce and study additive versions for motivic $\Gamma$-spaces in this paper. We start with a
reasonable additive category of correspondences $\Cor$, such that the exponential characteristic of the base field $k$
is invertible in $\Cor$, and replace $\cc M$ by the closed symmetric monoidal 
Grothendieck category $\Delta^{op}\ShvA$
of simplicial Nisnevich sheaves with $\Cor$-transfers. The $\cc M$-category $\Smk{}_{,+}$ is replaced here by a $\Delta^{op}\ShvA$-category
$\Sm$ whose objects are those of $\Smk$.

We define enriched motivic $\Cor$-spaces as objects of the Grothendieck category of $\Delta^{op}\ShvA$-enriched functors $[\Sm,\Delta^{op}\ShvA]$.
Special enriched motivic $\Cor$-spaces are defined similarly to special motivic $\Gamma$-spaces with slight modifications due to the additive context
(see Definition~\ref{AspaceDef} for the full list of axioms).
In particular, the category $\Gamma^{op}$ is redundant in this context (see Section~\ref{relatedtogamma}).

The category $[\Sm,\Delta^{op}\ShvA]$ comes equipped with a local and a motivic model structure. 
Denote the model categories by $[\Sm,\Delta^{op}\ShvA]_{\nis}$ and $[\Sm,\Delta^{op}\ShvA]_{\mot}$ respectively (see Section~\ref{motmodel}).
Let $\cc D([\Sm,\Delta^{op}\ShvA])$ be the homotopy category of $[\Sm,\Delta^{op}\ShvA]_{\nis}$. 
Define $\mathrm{Spc}_{\Cor}[\Sm]$ as the full subcategory of 
$\cc D([\Sm,\Delta^{op}\ShvA])$ consisting of special enriched motivic $\Cor$-spaces.
It is worth mentioning that $\cc D([\Sm,\Delta^{op}\ShvA])$ is equivalent to the full subcategory of connective 
chain complexes in the derived category $D([\Sm,\ShvA])$ of the Grothendieck category
$[\Sm,\ShvA]$. Thus $\mathrm{Spc}_{\Cor}[\Sm]$ can also be regarded
as a full subcategory of $D([\Sm,\ShvA])$, so that it can be studied by methods of classical homological algebra.

The following result is reminiscent of Bousfield--Friedlander's theorem mentioned above for classical $\Gamma$-spaces (see Theorem~\ref{corSpcAembedding}).

\begin{tthmm}
	 The category $\mathrm{Spc}_{\Cor}[\Sm]$ is equivalent to the homotopy category of the model category $[\Sm,\Delta^{op}\ShvA]_{\mot}$.
The fibrant objects of $[\Sm,\Delta^{op}\ShvA]_{\mot}$ are the pointwise locally fibrant special enriched motivic $\Cor$-spaces.
\end{tthmm}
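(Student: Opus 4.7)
The plan is to exhibit $[\Sm,\Delta^{op}\ShvA]_{\mot}$ as a left Bousfield localization of $[\Sm,\Delta^{op}\ShvA]_{\nis}$ at an explicit set $S$ of morphisms, and then to identify the $S$-local objects with the special enriched motivic $\Cor$-spaces. Once this identification is in place, both assertions of the theorem follow from two general facts about left Bousfield localizations: the fibrant objects of $L_S \cc C$ are exactly the fibrant objects of $\cc C$ that are $S$-local, and the homotopy category of $L_S \cc C$ is equivalent to the full subcategory of the homotopy category of $\cc C$ on $S$-local objects. Both facts apply in our situation because $\Delta^{op}\ShvA$ is a Grothendieck abelian category, so that the local model structure $[\Sm,\Delta^{op}\ShvA]_{\nis}$ is combinatorial and left proper.

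First I would recall the explicit construction of the motivic model structure from Section~\ref{motmodel} in order to read off the set $S$ of localizing maps. Each axiom in Definition~\ref{AspaceDef} should be encoded as a morphism between enriched representables: the additivity (Segal-type) axiom becomes a map of the form $\Cor(-,X\sqcup Y)\to \Cor(-,X)\oplus \Cor(-,Y)$; the $\bb A^1$-invariance axiom becomes $\Cor(-,X\times \bb A^1)\to \Cor(-,X)$; and Nisnevich excision becomes a morphism built out of an elementary distinguished square. A simplifying feature of the additive setting, noted in Section~\ref{relatedtogamma}, is that the classical Segal family of maps indexed over $\Gamma^{op}$ collapses to a single additivity generator. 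By the enriched Yoneda lemma, an object $\cc X\in[\Sm,\Delta^{op}\ShvA]$ is $S$-local (in the derived sense) if and only if $\cc X$ satisfies the corresponding axioms from Definition~\ref{AspaceDef}, namely if and only if $\cc X$ is a special enriched motivic $\Cor$-space.

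Combining the two steps gives both parts of the theorem simultaneously. An object is fibrant in $[\Sm,\Delta^{op}\ShvA]_{\mot}$ precisely when it is fibrant in $[\Sm,\Delta^{op}\ShvA]_{\nis}$, which is pointwise local fibrancy, and additionally $S$-local, which by the previous paragraph is exactly the axioms of Definition~\ref{AspaceDef}. This proves the fibrancy assertion. Moreover, the homotopy category of the localization is the full subcategory of $\cc D([\Sm,\Delta^{op}\ShvA])$ on $S$-local objects, which is $\mathrm{Spc}_{\Cor}[\Sm]$ by definition, giving the equivalence.

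The main obstacle I expect is the axiom-by-axiom verification that the generators of $S$ coming from Section~\ref{motmodel} translate precisely into Definition~\ref{AspaceDef}: one must be careful with the enriched Yoneda computation so that $S$-locality produces no more and no fewer conditions than the stated axioms. For the $\bb A^1$-invariance and Nisnevich excision generators the analysis also uses, in an essential way, the invertibility of the exponential characteristic of $k$ in $\Cor$ to ensure compatibility of the localization with the additive transfer structure on $\Cor$; this hypothesis is what allows the Nisnevich-local and motivic fibrancy conditions to be expressed entirely in terms of enriched representables.
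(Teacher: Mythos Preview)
Your overall strategy is correct and is exactly the paper's approach: $[\Sm,\Delta^{op}\ShvA]_{\mot}$ is defined as a left Bousfield localization of the local model structure, and the theorem follows once one identifies the local objects with the special enriched motivic $\Cor$-spaces. However, several of the details you sketch are wrong in ways that would prevent the argument from going through.

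First, your list of generators does not match Definition~\ref{AspaceDef}. There is no Segal/additivity axiom in that definition; the additive context makes such an axiom automatic, and the paper simply omits it (cf.\ Section~\ref{relatedtogamma}). Conversely, you have omitted two of the four actual axioms: axiom~(1), the $\bb A^1$-invariance of the presheaves of homotopy groups (a contravariant condition), and axiom~(2), cancellation. The paper's localizing set $\sim$ consists of four families $\bb A^1_1$, $\tau$, $\bb A^1_2$, $Nis$ corresponding precisely to axioms (1)--(4), and the cancellation family $\tau$ in particular is not of the simple ``representable'' type you describe. The localizing morphisms also live in $[\Sm,\Delta^{op}\ShvA]$ and are built from the enriched representables $\Sm(U,-)$, not from the sheaves $\Cor(-,X)$; your maps $\Cor(-,X\times\bb A^1)\to\Cor(-,X)$ are objects of $\ShvA$, not of the functor category. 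Finally, a technical point you pass over: the paper localizes at $\widehat{\sim}=\{s\otimes Z : s\in\sim,\ Z\in\Smk\}$ and needs a separate lemma relating the enriched (``strict'') notion of $\sim$-locality to ordinary $\widehat{\sim}$-locality.

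Second, your last paragraph is incorrect: the invertibility of the exponential characteristic plays no role at this stage. Section~\ref{motmodel} nowhere invokes that hypothesis; the identification of $\sim$-local objects with special $\Cor$-spaces is a formal Yoneda/adjunction computation and goes through for any $\Cor$. The hypothesis on $p$ is used elsewhere in the paper (e.g.\ in the R\"ondigs--{\O}stv{\ae}r theorem and in the comparison with $SH(k)$), not here.
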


As applications of the preceding theorem we recover connective motivic bispectra 
with rational coefficients $\SH(k)_{\bb Q, \geq 0}$ (respectively very effective 
motivic bispectra with rational coefficients $\SH^{\veff}(k)_{\bb Q}$) from
special rational enriched motivic $\Cor$-spaces $\mathrm{Spc}_{\Cor}[\Sm]$
(respectively very effective rational enriched motivic $\Cor$-spaces $\mathrm{Spc}^{\veff}_{\Cor}[\Sm]$) ---
see Theorems~\ref{thmSHgeq} and~\ref{finish}. 
Here we take $\cc A$ to be the category of finite Milnor--Witt correspondences with
rational coefficients $\widetilde{Cor}\otimes\bb Q$.

\begin{tthmm}
	The $(S^1,\Gm)$-evaluation functor induces equivalences of categories
	$$ev_{S^1,\Gm}: \mathrm{Spc}_{\widetilde{\mathrm{Cor}}, \bb Q}[\Sm] \rightarrow \SH(k)_{\bb Q,\geq 0}.$$
and
	$$ev_{S^1,\Gm}: \mathrm{Spc}^{\veff}_{\widetilde{\mathrm{Cor}}, \bb Q}[\Sm] \rightarrow \SH^{\veff}(k)_{\bb Q}.$$
\end{tthmm}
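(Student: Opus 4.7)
The plan is to reduce both equivalences to the preceding theorem, which identifies $\mathrm{Spc}_{\widetilde{\mathrm{Cor}},\bb Q}[\Sm]$ with the homotopy category of the motivic model structure $[\Sm,\Delta^{op}\Shv(\widetilde{\mathrm{Cor}}\otimes\bb Q)]_{\mot}$. The statement then amounts to exhibiting $ev_{S^1,\Gm}$ as the derived functor of a left Quillen equivalence from this model category onto a suitable model for connective rational motivic bispectra $\SH(k)_{\bb Q,\geq 0}$, with an analogous equivalence in the very effective case.

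First I would make the evaluation functor explicit: an enriched functor $\cc X:\Sm\to\Delta^{op}\Shv(\widetilde{\mathrm{Cor}}\otimes\bb Q)$ is evaluated at $\Gm^{\wedge n}$ to produce, after forgetting MW-transfers, a simplicial pointed Nisnevich sheaf; the $\Gm$-bispectrum structure arises from the enriched action of the mapping objects, while the $S^1$-direction comes from the simplicial degree via Dold--Kan. Connectivity is automatic since simplicial objects live in non-negative degrees. One then checks that Nisnevich-local and $\bb A^1$-local equivalences on the enriched side become stable motivic equivalences of bispectra, using the explicit description of motivically fibrant objects from the preceding theorem, so that $ev_{S^1,\Gm}$ descends to the homotopy category and lands in $\SH(k)_{\bb Q,\geq 0}$.

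The key input is the rational comparison between Milnor--Witt correspondences and the full rational stable motivic homotopy category. After tensoring with $\bb Q$, the category $\widetilde{\mathrm{Cor}}\otimes\bb Q$ models all of $\SH(k)_{\bb Q}$ --- both the plus and the minus parts --- in the spirit of D\'eglise--Fasel and Garkusha--Neshitov. Combined with the cancellation theorem for MW-correspondences, this identifies $\Gm$-loops on the sheaf side with $\Gm$-desuspension on the bispectrum side. Consequently, $ev_{S^1,\Gm}$ sends the representable sheaves $\widetilde{\mathrm{Cor}}\otimes\bb Q(-,X)$, which generate $\mathrm{Spc}_{\widetilde{\mathrm{Cor}},\bb Q}[\Sm]$, to rationalized suspension bispectra $\Sigma^{\infty}_{S^1,\Gm}X_+\otimes\bb Q$, establishing full faithfulness on generators; essential surjectivity onto the connective part follows because $\SH(k)_{\bb Q,\geq 0}$ is generated under homotopy colimits by the images of these generators.

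The main obstacle will be the passage from the local to the motivic model structure: one must show that $ev_{S^1,\Gm}$ both preserves and reflects motivic weak equivalences, which requires that a pointwise locally fibrant special enriched $\widetilde{\mathrm{Cor}}$-space actually evaluates to an $(S^1,\Gm)$-$\Omega$-bispectrum over the base field. This is clean rationally thanks to the comparison above but would fail integrally. The very effective variant then follows by restricting the equivalence to the reflective subcategories $\mathrm{Spc}^{\veff}_{\widetilde{\mathrm{Cor}},\bb Q}[\Sm]$ and $\SH^{\veff}(k)_{\bb Q}$; one verifies that $ev_{S^1,\Gm}$ respects the very effective condition on each side (triviality of the negative $\Gm$-slices), so that the equivalence restricts as claimed.
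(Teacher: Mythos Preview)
Your approach differs substantially from the paper's, and contains a genuine gap.

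The paper does \emph{not} attempt to exhibit $ev_{S^1,\Gm}$ as a direct left Quillen equivalence from $[\Sm,\Delta^{op}\Shv(\widetilde{\mathrm{Cor}}\otimes\bb Q)]_{\mot}$ to a model for $\SH(k)_{\bb Q,\geq 0}$. Instead it factors the comparison through the triangulated category $DM_{\widetilde{\mathrm{Cor}},\bb Q}$: first Theorem~\ref{geqtheorem} gives an equivalence $ev_{\Gm}:\mathrm{Spc}_{\Cor}[\Sm]\to DM_{\Cor,\geq 0}$ for any admissible $\Cor$ (this in turn rests on the equivalence $DM_{\Cor}[\Sm]\simeq DM_{\Cor}$ of the companion paper~\cite{bonart2022paper1}, together with a truncation argument to handle connectivity); then Garkusha's reconstruction theorem~\cite[Theorem~5.5]{garkusha2019compositio} supplies the equivalence $\mathcal U:DM_{\widetilde{\mathrm{Cor}},\bb Q}\xrightarrow{\sim}\SH(k)_{\bb Q}$; and Lemma~\ref{evforgetllemma} identifies $\mathcal U\circ ev_{\Gm}$ with $ev_{S^1,\Gm}$.

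The gap in your argument is the sentence ``$ev_{S^1,\Gm}$ sends the representable sheaves\dots to rationalized suspension bispectra\dots, establishing full faithfulness on generators''. Knowing where generators go does not establish full faithfulness; you must compute the mapping spaces between generators on both sides and show they agree. That computation is precisely what the factorisation through $DM_{\widetilde{\mathrm{Cor}},\bb Q}$ accomplishes: full faithfulness of $ev_{\Gm}$ comes from the enriched-functor machinery of~\cite{bonart2022paper1}, and full faithfulness of $\mathcal U$ is the content of the rational reconstruction theorem. Without an analogue of one of these ingredients your direct route does not close.

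For the very effective statement there is a second issue. In this paper ``very effective'' for an enriched motivic $\Cor$-space means Suslin's contractibility (Definition~\ref{suslindef}), not vanishing of negative $\Gm$-slices. Matching this with $\SH^{\veff}(k)_{\bb Q}=\SH^{\eff}(k)_{\bb Q}\cap\SH(k)_{\bb Q,\geq 0}$ is not a formal restriction: the paper needs Proposition~\ref{effcomparsion} (which uses the R\"ondigs--{\O}stv{\ae}r-type Theorem~\ref{ROLemma}) together with the framed bispectrum criterion~\cite[Theorem~3.6]{garkusha2018triangulated} to show that Suslin contractibility of $\mathcal X$ is equivalent to effectivity of $ev_{S^1,\Gm}(\mathcal X)$ in $\SH(k)$. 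Your proposal does not address this translation.
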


In particular, the preceding theorem makes $SH(k)_{\bb Q}$ more amenable to methods of homological algebra.
It is also worth mentioning that the results of this paper are based on the techniques developed in \cite{bonart2022paper1}.

The results of the paper were first presented at the Conference on Motivic and Equivariant Topology in May 2023 (Swansea, UK).
The author expresses his gratitude to his supervisor Prof. Grigory Garkusha
whose patience and keen insight have been indispensable throughout this work.

\subsubsection*{\bf Notation}
Throughout the paper we use the following notation.\\

\begin{tabular}{l|l}
	$k$ & field of exponential characteristic $p$\\
	$\Smk$ & smooth separated schemes of finite type over $k$ \\
	$\Cor$ & symmetric monoidal additive $V$-category of correspondences\\ 
	$\PshA$ & presheaves of abelian groups on $\Cor$ \\
	$\ShvA$ & Nisnevich sheaves of abelian groups on $\Cor$ \\
	$DM_{\Cor}$ & triangulated category of big motives with $\Cor$-correspondences\\
	$SH(k)$ & stable motivic homotopy category over $k$\\
	$M_{\Cor}(X)$ & $\Cor$-motive of $X\in\Smk$\\
	$\mathscr M$ & category of motivic spaces\\
	$f\mathscr M$ & category of finitely presented motivic spaces
\end{tabular}

Also, we assume that $0$ is a natural number.

\section{Preliminaries}
Let $\Smk$ be the category of smooth separated schemes of finite type over a filed $k$. 
Throughout this paper we work with an additive category of correspondences $\Cor$
that is symmetric monoidal and satisfies the strict $V$-property and the cancellation property 
in the sense of \cite{garkusha2019compositio}. Basic examples are given by the categories of finite
correspondences $Cor$ or Milnor--Witt correspondences $\widetilde{Cor}$.
We also assume that in $\Cor$ the exponential characteristic $p$ of $k$ is invertible. 
Note that for any additive category of correspondences $\Cor$ we can form an 
additive category of correspondences $\Cor[1/p]$ in which $p$ is invertible by 
tensoring all morphism groups of $\Cor$ with $\bb Z[1/p]$. 
Let $\ShvA$ be the category of Nisnevich sheaves on $\Cor$ with values in abelian groups.

We shall adhere to the following notations from \cite{garkusha2019compositio}.
Let $\Sp_{S^1,\bb G_m}(k)$ denote the category of symmetric $(S^1,\bb
G_m)$-bispectra, where the $\bb G_m$-direction is associated with
the pointed motivic space $(\bb G_m,1)$. It is equipped with a
stable motivic model category structure. Denote by $SH(k)$
its homotopy category. The category $SH(k)$ has a closed symmetric
monoidal structure with monoidal unit being the motivic sphere
spectrum $\bb S$. Given $p>0$, the
category $\Sp_{S^1,\bb G_m}(k)$ has a further model structure whose
weak equivalences are the maps of bispectra $f:X\to Y$ such that
the induced map of bigraded Nisnevich sheaves
$f_*:\underline{\pi}_{*,*}^{\bb A^1}(X)\otimes\bb Z[1/p]\to\underline{\pi}_{*,*}^{\bb A^1}(Y)\otimes\bb Z[1/p]$
is an isomorphism. In what follows we denote its homotopy category by
$SH(k)[1/p]$. The category $SH(k)_{\bb Q}$ is defined in a similar fashion.

Following~\cite{bonart2022paper1},
we define a $\ShvA$-enriched category $\Sm$, whose objects are those of $\Smk$, and whose morphism sheaves are defined by
$$\Sm(X,Y) := \inthom{\ShvA}{\Cor(-,X)_{\nis}}{\Cor(-,Y)_{\nis}}.$$

In Section \ref{sectionmodelstructures} we will define a natural local model structure on $\Delta^{op}\ShvA$.
Weak equivalences in this model structure are the local equivalences.

	According to \cite[Theorem 4.3.12]{clarke2019grothendieck}, if $\cc G$ is a Grothendieck category with a generator $G$, then 
	the category of simplicial objects $\Delta^{op}\cc G$ in $\cc G$ is also Grothendieck and
	the set 
	$\{ G \otimes \Delta[n] \mid n \geq 0 \} $
	is a family of generators for $\Delta^{op}\cc G$. 
	In particular, a family of generators for the Grothendieck category $\Delta^{op}\ShvA$ is given by the set $$\{ \Cor(-,X)_{\nis} \otimes \Delta[n] \mid X \in \Smk, n \geq 0 \}.$$
	Also, the category of enriched functors $[\Sm,\ShvA]$ is Grothendieck by~\cite{AlGG}. Its family of generators is given by
	$\{ \Sm(X,-)\otimes_{\ShvA}\Cor(-,Y)_{\nis}\mid X,Y \in \Smk \}.$ Hence $\Delta^{op}[\Sm,\ShvA]$ is Grothendieck by~\cite{clarke2019grothendieck}. 
	Its family of generators is given by
	$\{ \Sm(X,-)\otimes_{\ShvA}\Cor(-,Y)_{\nis}\otimes \Delta[n]\mid X,Y \in \Smk, n \geq 0 \}.$

Note that $\Delta^{op}[\Sm,\ShvA]$ and $[\Sm,\Delta^{op}\ShvA]$ are equivalent, and we will freely pass back and forth between the two.

\begin{dfn} \label{AspaceDef}
	An {\it enriched motivic $\Cor$-space\/} is an object of the Grothendieck category $\Delta^{op}[\Sm,\ShvA]$. 
	Similarly to \cite[Axioms 1.1]{garkusha2019framed}, an enriched motivic $\Cor$-space $\mathcal{X}$ is said to be \textit{special} if it satisfies the following axioms:
	\begin{enumerate}
		\item
		For all $n\geq 0$ and $U\in \Smk$ the presheaf of homotopy groups
		$
		V
		\longmapsto
		\pi_n(\cc X (U))(V)
		$
		is ${\bb A}^{1}$-invariant.
		\vspace{0.05in}
		
		\item (Cancellation)
		Let $\Gmn{1}$ denote the direct summand of the $1$-section 
		$\Cor(-,pt)_{\nis} \longrightarrow\Cor(-,\bb G_{m})_{\nis} $ in $\ShvA$ and for $n \geq 1$ inductively define $\Gmn{n+1}:= \Gmn{n} \otimes \Gmn{1}$.
		For all $n\geq 0$ and $U\in \Smk$ the canonical map
		$$
		\cc X(\Gmn{n}\times U)
		\longrightarrow \inthom{\Delta^{op}\ShvA}{\Gmn{1}}{\mathcal{X}(\Gmn{n+1}\times U)}
		$$ is a local equivalence.
		
		\item (${\bb A}^{1}$-invariance)
		For all $U\in \Smk$ the canonical map
		$
		\cc X(U\times\bb A^1)\longrightarrow\cc X(U)
		$ is a local equivalence.
		
		\item (Nisnevich excision)
		For every elementary Nisnevich square in $\Smk$
		$$
		\xymatrix{
			U'\ar[r]\ar[d]&V'\ar[d]\\
			U\ar[r]&V }
		$$
		the induced square
		$$
		\xymatrix{
			\cc X(U')\ar[r]\ar[d]&\cc X(V')\ar[d]\\
			\cc X(U)\ar[r]&\cc X(V)
		}
		$$ is homotopy cartesian in the local model structure on $\Delta^{op}\ShvA$.
		
	\end{enumerate}
\end{dfn}

For $n\geq 0$ and every finitely generated field extension $K/k$,
we have the standard algebraic $n$-simplex
$$
\Delta^n_K
=
\spec(K[x_0,\ldots,x_n]/(x_0+\cdots+x_n-1)).
$$
For every $0\leq i\leq n$ we define a closed subscheme $v_i$ of $\Delta^n_K$ by the equations $x_{j}=0$ for $j\neq i$.
We write $\wh{\Delta}^{n}_{K/k}$ for the semilocalization of the standard algebraic $n$-simplex $\Delta^n_K$
with closed points the vertices $v_{0},\dots,v_{n}\in \Delta^n_K$.

\begin{dfn}\label{suslindef}
    Similarly to \cite[Axioms 1.1]{garkusha2019framed}, we say that $\mathcal{X}$ is \textit{very effective} or \textit{satisfies Suslin's contractibility} 
    if for every $U \in \Sm$ and every finitely generated field extension $K/k$ the diagonal of the bisimplicial abelian group
    $\mathcal{X}(\Gmn{1} \times U)(\semilocalsimplex) $
    is contractible.
\end{dfn}

Since we assume that $p$ is invertible in $\Cor$ the following lemma holds.
\begin{lem}\label{invertplemma}
	If $F : \Cor \rightarrow \Ab$ is an additive functor, then $F$ factors over the full subcategory of $\bb Z[1/p]$-modules $\mathrm{Mod}_{\bb Z[1/p]} \subseteq \Ab$.
	In particular the inclusion functor $\mathrm{Mod}_{\bb Z[1/p]} \rightarrow \Ab$ induces an equivalence of categories
	$\Shv(\Cor,\Ab) \simeq \Shv(\Cor,\mathrm{Mod}_{\bb Z[1/p]}).$
\end{lem}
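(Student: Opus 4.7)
The proof should be fairly short and formal; the whole content is extracted from the hypothesis that $p$ is invertible in $\Cor$. My plan is as follows.

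First I would unpack what ``$p$ is invertible in $\Cor$'' means. Since $\Cor$ is preadditive, this just says every morphism group $\Hom_{\Cor}(X,Y)$ is naturally a $\bb Z[1/p]$-module; equivalently, for each object $X$ the central ring element $p\cdot\id_X\in\End_{\Cor}(X)$ admits a two-sided inverse $\alpha_X\in\End_{\Cor}(X)$, so that $p\alpha_X=\alpha_X p = \id_X$. This is the only thing I would need from the hypothesis.

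Next, given any additive functor $F\colon\Cor\to\Ab$, I would observe that additivity and functoriality force $F(p\cdot\id_X)=p\cdot\id_{F(X)}$ and $F(\alpha_X)\circ p\cdot\id_{F(X)} = p\cdot\id_{F(X)}\circ F(\alpha_X) = \id_{F(X)}$. Thus multiplication by $p$ is an automorphism of the abelian group $F(X)$, which by definition means $F(X)$ is a $\bb Z[1/p]$-module, and $F$ factors through $\mathrm{Mod}_{\bb Z[1/p]}\subseteq\Ab$. The same argument applied to $F$ on morphisms shows that $F(f)$ is $\bb Z[1/p]$-linear, so the factorization is through an additive functor $\overline F\colon\Cor\to\mathrm{Mod}_{\bb Z[1/p]}$.

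For the second assertion, I would use that the inclusion $\iota\colon\mathrm{Mod}_{\bb Z[1/p]}\hookrightarrow\Ab$ is fully faithful and, being right adjoint to $-\otimes_{\bb Z}\bb Z[1/p]$, preserves all limits; in particular it preserves equalizers and products, so it sends Nisnevich sheaves to Nisnevich sheaves and reflects the sheaf condition. Postcomposition with $\iota$ therefore gives a fully faithful functor $\iota_*\colon\Shv(\Cor,\mathrm{Mod}_{\bb Z[1/p]})\to\Shv(\Cor,\Ab)$. Essential surjectivity is exactly the content of the first part of the lemma: any $\Ab$-valued Nisnevich sheaf on $\Cor$ is, as an additive functor out of $\Cor^{op}$, of the form $\iota\circ\overline{\mathcal F}$ for a unique $\mathrm{Mod}_{\bb Z[1/p]}$-valued sheaf $\overline{\mathcal F}$.

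There is no real obstacle here; the only thing to be slightly careful about is to verify that factorization through $\mathrm{Mod}_{\bb Z[1/p]}$ really upgrades to an additive factorization (so that one obtains an equivalence of additive sheaf categories, not merely of underlying categories), and that the sheaf condition on $\Cor$ is insensitive to whether we take values in $\Ab$ or in $\mathrm{Mod}_{\bb Z[1/p]}$ — both of which follow immediately from $\iota$ being fully faithful and limit-preserving.
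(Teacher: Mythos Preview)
Your proof is correct and, in fact, more elementary than the paper's. The paper argues via the $\Ab$-enriched co-Yoneda lemma: it writes $F(U)$ as the coend $\int^{X} F(X)\otimes\Cor(U,X)$, observes that $\Cor(U,X)\cong\Cor(U,X)\otimes\bb Z[1/p]$ because $p$ is invertible in $\Cor$, and then pulls the $\bb Z[1/p]$-factor out of the coend using that $-\otimes\bb Z[1/p]$ is a left adjoint, concluding $F(U)\cong F(U)\otimes\bb Z[1/p]$. Your argument bypasses this machinery entirely by simply applying the additive functor $F$ to the inverse of $p\cdot\id_X$ in $\End_{\Cor}(X)$; this is the more direct route and uses nothing beyond the definition of an additive functor. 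The paper's coend argument has the virtue of making the $\bb Z[1/p]$-module structure appear ``globally'' via a natural isomorphism $F\cong F\otimes\bb Z[1/p]$, but for the statement at hand your pointwise argument is both shorter and conceptually clearer. The paper does not spell out the second assertion about sheaf categories; your treatment of it via fully-faithfulness and limit-preservation of $\iota$ is the standard one and fills that gap cleanly.
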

\begin{proof}
	If $F : \Cor \rightarrow \Ab$ is additive, then $F$ is an $\Ab$-enriched functor. By the $\Ab$-enriched co-Yoneda lemma we can write $F$ as the following coend: 
	for all $U \in \Smk$ we have an isomorphism in $\Ab$,
	$$F(U) \cong \overset{X \in \Smk}{\int} F(X) \otimes \Cor(U,X). $$
	Since $p$ is invertible in $\Cor(U,X)$ we have a canonical isomorphism $\Cor(U,X) \cong \Cor(U,X) \otimes \bb Z[1/p]$.
	Since the functor $-\otimes \bb Z[1/p] : \Ab \rightarrow \Ab$ is a left adjoint, it preserves coends, so we can compute
	$$F(U) \cong \overset{X \in \Smk}{\int} F(X) \otimes \Cor(U,X) \cong \overset{X \in \Smk}{\int} F(X) \otimes (\Cor(U,X) \otimes \bb Z[1/p]) \cong $$
	$$  \bb Z[1/p] \otimes \overset{X \in \Smk}{\int} F(X) \otimes \Cor(U,X) \cong \bb Z[1/p] \otimes F(U)$$
	which shows that $F(U)$ is a $\bb Z[1/p]$-module.
\end{proof}

For some of our results we will also have to make additional assumptions on the category of correspondences $\Cor$.

\begin{dfn}Let $\mathrm{Fr}_*(k)$ be the category of Voevodksy's framed correspondences (see \cite[Definition 2.3]{garkusha2021framedmotives}).
		For each $V \in \Smk$ let $\sigma_V : V \rightarrow V$ be the level 1 
		explicit framed correspondence $(\{0\}\times V, \bb A^1 \times V, \mathrm{pr}_{\bb A^1}, \mathrm{pr}_V )$.	\begin{enumerate} \label{framedCorDef}
	
		\item 
		We say that the category of correspondences $\Cor$ \textit{has framed correspondences} if there is a functor $\Phi: \mathrm{Fr}_*(k) \rightarrow \Cor$ which is the identity on objects and which takes every $\sigma_V$ to the identity of $V$. 
		
		\item We say that $\Cor$ \textit{satisfies the $\widehat{\Delta}$-property}
		 if for every  $ n > 0$ and for every finitely generated field extension $K/k$ the diagonal of $M_{\Cor}(\Gmn{n})(\semilocalsimplex)$ is quasi-isomorphic to $0$.
		 Here  $M_{\Cor} : \Sm \rightarrow \Ch(\ShvA)$ is the enriched motive functor
		 $M_{\Cor}(U):= C_*\Cor(-,U)_{\nis}.$
	\end{enumerate}
Basic examples satisfying both items are given by the categories of finite
correspondences $Cor$ or Milnor--Witt correspondences $\widetilde{Cor}$.
\end{dfn}

\section{The local model structure}\label{sectionmodelstructures}

In \cite[Section 3]{bonart2022paper1} we constructed a model structure on $\Ch(\ShvA)$ that is cellular, strongly left proper, weakly finitely generated, monoidal and satisfies the monoid axiom.
In this section we construct a model structure on $\Ch_{\geq 0}(\ShvA)$ that is cellular, strongly left proper, weakly finitely generated, monoidal, satisfies the monoid axiom, and in which weak equivalences are local quasi-isomorphisms. We construct the model structure by taking the right transferred model structure along the inclusion $\Ch_{\geq 0}(\ShvA) \rightarrow \Ch(\ShvA)$.
We then transfer the model structure along the Dold-Kan correspondence, to get a model structure on $\Delta^{op}\ShvA$ that is cellular, strongly left proper, weakly finitely generated, monoidal, satisfies the monoid axiom, and in which weak equivalences are stalkwise weak equivalences of simplicial sets.

Let us now start by constructing the model structure on $\Ch_{\geq 0}(\ShvA)$.
We have an inclusion functor $\iota : \Ch_{\geq 0}(\ShvA) \rightarrow \Ch(\ShvA)$. The inclusion functor $\iota$ has a left adjoint $\tau_{\text{naive}} : \Ch(\ShvA) \rightarrow \Ch_{\geq 0}(\ShvA)$, called the naive truncation functor. It sends
$\dots \rightarrow A_1 \rightarrow A_0 \rightarrow A_{-1} \rightarrow \dots $
to 
$\dots \rightarrow A_1 \rightarrow A_0.$
The inclusion functor $\iota$ also has a right adjoint $\tau_{\text{good}}$, called the good truncation functor. It sends 
$$\dots \rightarrow A_1 \rightarrow A_0 \overset{\partial^0_A}{\rightarrow} A_{-1} \rightarrow \dots $$
to 
$\dots \rightarrow A_1 \rightarrow \ker(\partial^0_A).$
So we have $\tau_{\text{naive}} \dashv \iota \dashv \tau_{\text{good}}.$

\begin{lem}\label{iotaLcofib}
	The endofunctor $\iota\tau_{\text{naive}} : \Ch(\ShvA) \rightarrow \Ch(\ShvA)$ preserves cofibrations.
\end{lem}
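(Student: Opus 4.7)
The plan is to exploit the fact that $\iota\tau_{\text{naive}}$ is itself a left adjoint. From the given chain of adjunctions $\tau_{\text{naive}} \dashv \iota \dashv \tau_{\text{good}}$, both $\tau_{\text{naive}}$ and $\iota$ are left adjoints, hence so is their composite $\iota\tau_{\text{naive}}$. In particular, $\iota\tau_{\text{naive}}$ preserves all small colimits and retracts. Since the model structure on $\Ch(\ShvA)$ constructed in \cite[Section 3]{bonart2022paper1} is cellular and hence cofibrantly generated, every cofibration is a retract of a transfinite composition of pushouts of generating cofibrations. Therefore it suffices to check that $\iota\tau_{\text{naive}}$ sends each generating cofibration to a cofibration.

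Next I would recall that the generating cofibrations of $\Ch(\ShvA)$ may be taken to be the standard maps $S^{n-1} G \hookrightarrow D^n G$, indexed by $n \in \mathbb{Z}$ and a chosen set of generators $G$ of $\ShvA$, where $S^n G$ denotes $G$ placed in degree $n$ and $D^n G$ the two-term complex $G \xrightarrow{\mathrm{id}} G$ concentrated in degrees $n$ and $n-1$. I would then split into three cases according to the sign of $n$. If $n \geq 1$, both source and target live in nonnegative degrees and the map is left unchanged by $\iota\tau_{\text{naive}}$, so it remains a generating cofibration. If $n \leq -1$, both complexes are sent to $0$ and the image is the zero map, trivially a cofibration. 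The only substantive case is $n = 0$: the map $S^{-1}G \to D^0 G$ is sent to $0 \to S^0 G$, which I would identify with the pushout of the generating cofibration $S^{-1}G \to D^0 G$ along $S^{-1}G \to 0$ (since $D^0 G / S^{-1}G \cong S^0 G$), thereby exhibiting it as a cofibration.

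The main obstacle I expect is reconciling the explicit form of the generating cofibrations in \cite[Section 3]{bonart2022paper1} with the standard description above; since that model structure is built by localizing from a projective-type structure, its cofibrations should coincide with the projective cofibrations, so this should be a bookkeeping rather than a conceptual issue. Once this identification is in place, the three-case analysis above closes the proof.
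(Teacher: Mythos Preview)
Your proposal is correct and follows essentially the same approach as the paper: reduce to generating cofibrations using that $\iota\tau_{\text{naive}}$ is a left adjoint, then do a three-case analysis on the degree, handling the boundary case via a pushout square. The only cosmetic difference is an index shift in your convention for $D^n$ (you take $D^nG$ in degrees $n,n-1$ with boundary $S^{n-1}G$, whereas the paper writes the generating cofibration as $\Cor(-,X)_{\nis}\otimes S^n\bb Z \to \Cor(-,X)_{\nis}\otimes D^n\bb Z$ with $D^n$ in degrees $n+1,n$), so your case $n=0$ corresponds to the paper's $n=-1$; the arguments match exactly under this shift, and your anticipated bookkeeping obstacle is indeed nonexistent.
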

\begin{proof}
	Since $\iota\tau_{\text{naive}}$ is a left adjoint functor, it suffices to check it on the set of generating cofibrations $$I_{\Ch(\ShvA)} = \{ \Cor(-,X)_{\nis} \otimes S^n\bb Z \rightarrow \Cor(-,X)_{\nis} \otimes D^n\bb Z  \mid n \in \bb Z, X \in \Smk  \}.$$
	So take $n \in \bb Z$, $X \in \Smk$ and consider the map $$f : \Cor(-,X)_{\nis} \otimes S^n\bb Z \rightarrow \Cor(-,X)_{\nis} \otimes D^n\bb Z. $$
	If $n \geq 0$ then $\iota\tau_{\text{naive}}(f) = f$ is a cofibration.
	If $n \leq -2$ then $\iota\tau_{\text{naive}}(f) = 0$ is a cofibration.
	If $n = -1$ then $\iota\tau_{\text{naive}}(f)$ is the map
	$0 \rightarrow \Cor(-,X)_{\nis} \otimes S^0\bb Z$
	which is a cofibration, due to the following pushout square
	$$\xymatrix{ \Cor(-,X)_{\nis} \otimes S^{-1}\bb Z  \ar[r] \ar[d] & \Cor(-,X)_{\nis} \otimes D^{-1}\bb Z \ar[d] \\
	0 \ar[r] & \Cor(-,X)_{\nis} \otimes S^0\bb Z } $$
as required.
\end{proof}

\begin{dfn}
	Given a model category $M$ and an adjunction $L : N \rightleftarrows M : R$, we say that the right transferred model structure along the adjunction $L \dashv R$ exists, if there exists a model structure on $N$, such that a morphism $f$ is a weak equivalence (respectively cofibration) in $N$ if and only if $L(f)$ is a weak equivalence (respectively cofibration) in $M$. 
\end{dfn}

\begin{lem}\label{lemmacofibgen}
	The left transferred model structure on $\Ch_{\geq 0}(\ShvA)$ along the adjunction 
	$$ \iota : \Ch_{\geq 0}(\ShvA) \rightleftarrows \Ch(\ShvA) : \tau_{\textrm{good}}  $$
	exists. The resulting model structure on $\Ch_{\geq 0}(\ShvA)$ is cofibrantly generated.
\end{lem}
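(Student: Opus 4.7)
The plan is to apply Kan's standard transfer theorem for cofibrantly generated model categories to the adjunction $\tau_{\text{naive}} : \Ch(\ShvA) \rightleftarrows \Ch_{\geq 0}(\ShvA) : \iota$. The existing local model structure on $\Ch(\ShvA)$ from \cite[Section 3]{bonart2022paper1} transfers to $\Ch_{\geq 0}(\ShvA)$, with candidate generating cofibrations $I^{\geq 0} := \tau_{\text{naive}}(I)$ and candidate generating trivial cofibrations $J^{\geq 0} := \tau_{\text{naive}}(J)$. This will produce a model structure in which weak equivalences (and, as will be verified, cofibrations) are reflected by $\iota$, matching the statement of the lemma.

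First I would describe $I^{\geq 0}$ explicitly using the case analysis in the proof of Lemma \ref{iotaLcofib}: generators of $I$ lying in non-negative degrees are preserved, generators in sufficiently negative degrees become the trivial map $0\to 0$ and can be discarded, and the one boundary case $n=-1$ produces the extra generator $0 \to \Cor(-,X)_{\nis}$. A similar analysis describes $J^{\geq 0}$. This makes $I^{\geq 0}$ and $J^{\geq 0}$ into genuine (small) sets of maps in $\Ch_{\geq 0}(\ShvA)$.

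Second, I would verify the hypotheses of Kan's transfer theorem. Smallness of the sources of $I^{\geq 0}$ and $J^{\geq 0}$ transfers from the corresponding smallness in $\Ch(\ShvA)$ via the fully faithful inclusion $\iota$, which preserves filtered colimits since it is left adjoint to $\tau_{\text{good}}$. For the acyclicity condition, every $J^{\geq 0}$-cell complex must become a trivial cofibration in $\Ch(\ShvA)$ after applying $\iota$; since $\iota$ preserves all colimits, this reduces to showing that $\iota\tau_{\text{naive}}(j)$ is a trivial cofibration in $\Ch(\ShvA)$ for each $j \in J$. The cofibration half is exactly Lemma \ref{iotaLcofib}, while the local-quasi-isomorphism half must be checked for the specific list of generators from \cite{bonart2022paper1}.

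Having obtained the transferred model structure, one verifies that its cofibrations coincide with those $f$ for which $\iota(f)$ is a cofibration in $\Ch(\ShvA)$: every cofibration is a retract of an $I^{\geq 0}$-cell complex, and applying $\iota$ yields a retract of an $\iota\tau_{\text{naive}}(I)$-cell complex, which is a cofibration in $\Ch(\ShvA)$ by Lemma \ref{iotaLcofib}; the converse uses the adjunction $\tau_{\text{naive}} \dashv \iota$ together with fully-faithfulness of $\iota$. The main obstacle is the acyclicity step: naive truncation does not in general preserve weak equivalences (for instance $\tau_{\text{naive}}(D^{0}\bb Z) = S^{0}\bb Z$ is not contractible), so the verification depends on the precise form of the generators in $J$ from \cite{bonart2022paper1}; any problematic generator must be replaced in $J^{\geq 0}$ by an acyclic alternative that still generates the same class of trivial cofibrations in $\Ch_{\geq 0}(\ShvA)$.
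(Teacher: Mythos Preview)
Your route differs from the paper's: you attempt a \emph{right} transfer along $\tau_{\text{naive}} \dashv \iota$ via Kan's theorem, whereas the paper uses a \emph{left} transfer along $\iota \dashv \tau_{\text{good}}$ via \cite[Theorem~2.23]{bayeh2015lefttransfer}. Both would target the same model structure, since both create weak equivalences through $\iota$ and, as you argue, the cofibrations would end up agreeing as well.

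The obstacle you flag at the end is a genuine gap, not a detail to be filled in. The generating set $J$ for the local model structure on $\Ch(\ShvA)$ arises from a Bousfield localisation and is not given explicitly in \cite{bonart2022paper1}, so there is no finite list of generators to inspect and repair. Even on the projective part, the map $0 \to \Cor(-,X)_{\nis}\otimes D^n\bb Z$ for the one value of $n$ whose disk straddles degree $0$ is sent by $\tau_{\text{naive}}$ to $0 \to \Cor(-,X)_{\nis}\otimes S^0\bb Z$, which is not acyclic; so acyclicity of $\tau_{\text{naive}}(J)$-cell complexes already fails, and your proposed fix (``replace problematic generators by acyclic alternatives generating the same fibrations'') is not carried out and is hard to make precise without an explicit $J$. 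The paper's approach avoids this entirely: the left-transfer criterion asks only that every map $p$ with the RLP against $\iota^{-1}(\text{cofibrations})$ satisfy $\iota(p)\in\{\text{weak equivalences}\}$. One shows $\iota(p)$ is even a trivial fibration by converting a lifting problem against an arbitrary cofibration $f$ in $\Ch(\ShvA)$ into a lifting problem of $p$ against $\tau_{\text{naive}}(f)$ via the adjunction $\tau_{\text{naive}}\dashv\iota$; since $\iota\tau_{\text{naive}}(f)$ is again a cofibration by Lemma~\ref{iotaLcofib}, the lift exists. No knowledge of $J$ is required.
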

\begin{proof}
	We use \cite[Theorem 2.23]{bayeh2015lefttransfer}. All involved categories are locally presentable, and $\Ch(\ShvA)$ is cofibrantly generated, so the theorem is applicable.
	We now have to show that $$\mathrm{RLP}( \iota^{-1}(\{ \text{cofibrations} \}) ) \subseteq \iota^{-1}(\{ \text{weak equivalences} \}).$$
	
	So take $p : X \rightarrow Y$ with $p \in \mathrm{RLP}( \iota^{-1}(\{ \text{cofibrations} \}) )$. We want to show that $\iota(p)$ is a weak equivalence in $\Ch(\ShvA)$. We will show that $\iota(p)$ is a trivial fibration, by showing that it has the right lifting property with respect to cofibrations.
	Let $f : A \rightarrow B$ be a cofibration in $\Ch(\ShvA)$ and consider a lifting problem
	$$\xymatrix{A \ar[d]_f \ar[r] & \iota X \ar[d]^{\iota p} \\
	B \ar[r] \ar@{-->}[ur] & \iota Y  & .}$$
By adjunction this diagram has a lift, if and only if the following diagram has a lift
$$\xymatrix{\tau_{\text{naive}}A \ar[d]_{\tau_{\text{naive}}f} \ar[r] & X \ar[d]^{p} \\
	\tau_{\text{naive}}B \ar[r] \ar@{-->}[ur] & Y  & .}$$
Since $p \in \mathrm{RLP}( \iota^{-1}(\{ \text{cofibrations} \}) )$, one has to show that $\tau_{\text{naive}}f \in \iota^{-1}(\{ \text{cofibrations} \})$. 
One has to show that $\iota\tau_{\text{naive}}f$ is a cofibration. As $f$ is a cofibration, this follows from Lemma~\ref{iotaLcofib}.
\end{proof}

We now have a model structure on $\Ch_{\geq 0}(\ShvA)$, in which a morphism $f$ is weak equivalence (respectively cofibration) if and only if $\iota f$ is a weak equivalence (respectively cofibration) in $\Ch(\ShvA)$, and a morphism is a fibration in $\Ch_{\geq 0}(\ShvA)$ if and only if it has the right lifting property with respect to all trivial cofibrations. Furthermore, the adjunction $$\iota : \Ch_{\geq 0}(\ShvA) \rightleftarrows \Ch(\ShvA) : \tau_{\text{good}}$$
is a Quillen adjunction.
Since weak equivalences in $\Ch(\ShvA)$ are the local quasi-isomorphisms, 
it follows that also weak equivalences in $\Ch_{\geq 0}(\ShvA)$ are the local quasi-isomorphisms.

\begin{lem}
	$\Ch_{\geq 0}(\ShvA)$ is a monoidal model category.
\end{lem}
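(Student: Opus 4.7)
The plan is to deduce the monoidal model structure on $\Ch_{\geq 0}(\ShvA)$ from the one on $\Ch(\ShvA)$ established in \cite[Section 3]{bonart2022paper1}, by exploiting the adjoint triple $\tau_{\text{naive}}\dashv \iota\dashv\tau_{\text{good}}$. The key point is that the inclusion $\iota$, having both a left and a right adjoint, preserves all limits and colimits, and is also strong monoidal: if $A,B$ lie in $\Ch_{\geq 0}(\ShvA)$, then their tensor product computed in $\Ch(\ShvA)$ is again non-negatively graded, and the canonical map $\iota(A)\otimes \iota(B)\to \iota(A\otimes B)$ is an isomorphism. Since $\iota$ preserves pushouts and tensor products, it commutes with the formation of the pushout-product; that is, for any maps $f,g$ in $\Ch_{\geq 0}(\ShvA)$ one has a natural isomorphism
$$\iota(f\square g)\;\cong\;\iota(f)\square \iota(g).$$

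With this identity in hand, the pushout-product axiom is immediate. Given two cofibrations $f,g$ in $\Ch_{\geq 0}(\ShvA)$, Lemma~\ref{lemmacofibgen} says that $\iota(f),\iota(g)$ are cofibrations in $\Ch(\ShvA)$; by the monoidal structure on $\Ch(\ShvA)$ their pushout-product $\iota(f)\square\iota(g)=\iota(f\square g)$ is a cofibration; and applying the definition of cofibrations in the transferred model structure once more yields that $f\square g$ itself is a cofibration in $\Ch_{\geq 0}(\ShvA)$. Exactly the same argument, using that weak equivalences in both categories are detected by $\iota$ (and coincide with local quasi-isomorphisms), handles the case in which one of $f,g$ is additionally a weak equivalence: $\iota(f)\square\iota(g)$ is then a trivial cofibration in $\Ch(\ShvA)$, so $f\square g$ is a trivial cofibration in $\Ch_{\geq 0}(\ShvA)$.

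It remains to verify the unit axiom. The tensor unit of $\Ch(\ShvA)$ is $\Cor(-,\mathrm{pt})_{\nis}$ concentrated in degree $0$, which lies in $\Ch_{\geq 0}(\ShvA)$ and serves as the tensor unit there as well. This object is cofibrant in $\Ch(\ShvA)$ (it is obtained as a pushout of generating cofibrations, cf.\ the pushout diagram in the proof of Lemma~\ref{iotaLcofib}), hence $\iota$ of the map $0\to \one$ is a cofibration in $\Ch(\ShvA)$, so $\one$ is cofibrant in $\Ch_{\geq 0}(\ShvA)$. Consequently the unit axiom is automatic.

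The only genuine obstacle is the bookkeeping needed to check that $\iota$ is strong monoidal and that it really does commute with the pushout-product construction; once that is in place, the monoidal axioms transport across the adjunction essentially for free. No separate hands-on computation with generating (trivial) cofibrations is required.
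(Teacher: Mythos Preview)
Your proposal is correct and follows essentially the same approach as the paper: both arguments use that $\iota$ is strong monoidal and a left adjoint to obtain $\iota(f\square g)\cong\iota(f)\square\iota(g)$, then transport the pushout-product and unit axioms from $\Ch(\ShvA)$ back to $\Ch_{\geq 0}(\ShvA)$ via the definition of the transferred model structure.
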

\begin{proof}
	Let us verify the pushout product axiom.
	Let $f, g$ be two cofibrations in $\Ch_{\geq 0}(\ShvA)$, and let $f \square g$ be their pushout-product.
	Since $\iota: \Ch_{\geq 0}(\ShvA) \rightarrow \Ch(\ShvA)$ is a strong monoidal left adjoint functor, we have an isomorphism of arrows
	$\iota(f\square g) \cong \iota(f) \square \iota(g).$
	As $f, g$ are cofibrations in $\Ch_{\geq 0}(\ShvA)$, we see that $\iota(f), \iota(g)$ are cofibrations in $\Ch(\ShvA)$. Since $\Ch(\ShvA)$ is a monoidal model category, $\iota(f) \square \iota(g)$ is a cofibration in $\Ch(\ShvA)$. So $f\square g$ is a cofibration in $\Ch_{\geq 0}(\ShvA)$.
	Also, if $f$ or $g$ is a trivial cofibration in $\Ch_{\geq 0}(\ShvA)$, then $\iota(f)$ or $\iota(g)$ is a trivial cofibration in $\Ch(\ShvA)$. Thus $\iota(f) \square \iota(g)$ is a trivial cofibration, hence $f\square g$ is a trivial cofibration. Therefore $\Ch_{\geq 0}(\ShvA)$ satisfies the pushout-product axiom.
	
	Let us verify the unit axiom.
	If $\mathbbm{1}_{\geq 0}$ is the monoidal unit of $\Ch_{\geq 0}(\ShvA)$, and $\mathbbm{1}$ is the monoidal unit of $\Ch(\ShvA)$, then since $\iota$ is strong monoidal we have an isomorphism
	$ \iota \mathbbm{1}_{\geq 0} \cong \mathbbm{1}.$
	As $\mathbbm{1} = \Cor(-,pt)_{\nis}$ is cofibrant in $\Ch(\ShvA)$ it follows that $\mathbbm{1}_{\geq 0}$ is cofibrant in  $\Ch_{\geq 0}(\ShvA)$. This implies the unit axiom.
\end{proof}

\begin{lem}
	$\Ch_{\geq 0}(\ShvA)$ satisfies the monoid axiom.
\end{lem}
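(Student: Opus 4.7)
The plan is to deduce the monoid axiom for $\Ch_{\geq 0}(\ShvA)$ directly from the monoid axiom for $\Ch(\ShvA)$, which is known from \cite[Section 3]{bonart2022paper1}, by transporting the relevant class of maps through the adjunction $\iota \dashv \tau_{\textrm{good}}$. The key structural facts I will use are that $\iota$ is a strong monoidal left adjoint, hence preserves all colimits and tensor products strictly, that $\iota$ sends (trivial) cofibrations of $\Ch_{\geq 0}(\ShvA)$ to (trivial) cofibrations of $\Ch(\ShvA)$ by the very definition of the transferred model structure from Lemma~\ref{lemmacofibgen}, and that weak equivalences in both categories are local quasi-isomorphisms, so $\iota$ both preserves and reflects weak equivalences.

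With these facts in hand, the proof proceeds as follows. Let $\cc J$ denote the class of trivial cofibrations in $\Ch_{\geq 0}(\ShvA)$, and consider the class $(\cc J \otimes \Ch_{\geq 0}(\ShvA))\text{-cell}$ of transfinite compositions of pushouts of morphisms of the form $j \otimes X$ with $j \in \cc J$ and $X \in \Ch_{\geq 0}(\ShvA)$. I need to show every map in this class is a weak equivalence. Take such a map $f$. Because $\iota$ is a strong monoidal left adjoint, $\iota(j \otimes X) \cong \iota(j) \otimes \iota(X)$, and $\iota(j)$ is a trivial cofibration in $\Ch(\ShvA)$. Moreover $\iota$ preserves pushouts and transfinite compositions, so $\iota(f)$ lies in the analogous class $(\iota\cc J \otimes \Ch(\ShvA))\text{-cell}$ inside $\Ch(\ShvA)$.

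Since $\Ch(\ShvA)$ satisfies the monoid axiom by \cite[Section 3]{bonart2022paper1}, $\iota(f)$ is a local quasi-isomorphism. But weak equivalences in $\Ch_{\geq 0}(\ShvA)$ are exactly the maps $f$ such that $\iota(f)$ is a weak equivalence, so $f$ is a weak equivalence in $\Ch_{\geq 0}(\ShvA)$, as required.

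I do not expect any real obstacle here: the argument is essentially a transport statement, and all the needed preservation properties of $\iota$ are formal consequences of it being a strong monoidal left adjoint that both preserves and reflects the class of weak equivalences. The only point worth double-checking is that the family of generating trivial cofibrations one would use, if one wanted to verify the monoid axiom on a generating set (as is often the case for weakly finitely generated model categories), is likewise compatible with $\iota$; but since $\iota$ takes generating trivial cofibrations of $\Ch_{\geq 0}(\ShvA)$ (inherited via the transfer) to trivial cofibrations of $\Ch(\ShvA)$, this presents no additional difficulty.
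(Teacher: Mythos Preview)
Your proposal is correct and follows essentially the same route as the paper: both arguments push the class of maps through the strong monoidal left adjoint $\iota$, use that $\iota$ sends trivial cofibrations to trivial cofibrations and reflects weak equivalences, and then invoke the monoid axiom for $\Ch(\ShvA)$ from \cite{bonart2022paper1}. The only cosmetic difference is that the paper phrases the argument for the full $-\cof$ closure while you work with $-\mathrm{cell}$; since weak equivalences are closed under retracts this is immaterial.
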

\begin{proof}
	Let $W_{\geq 0}$ denote the class of weak equivalences and $CW_{\geq 0}$ denote the class of trivial cofibrations in $\Ch_{\geq 0}(\ShvA)$.
	Let $W$ denote the class of weak equivalences and $CW$ denote the class of trivial cofibrations in $\Ch(\ShvA)$.
	We need to show that
	$$((CW_{\geq 0})\otimes \Ch_{\geq 0}(\ShvA))-\cof \subseteq W_{\geq 0}.$$
	Since $W_{\geq 0} = \iota^{-1}(W)$, this means we have to show that
	$$\iota(((CW_{\geq 0})\otimes \Ch_{\geq 0}(\ShvA))-\cof) \subseteq W.$$
	
	Since $\iota$ is a strong monoidal left adjoint functor we have
	$$\iota(((CW_{\geq 0})\otimes \Ch_{\geq 0}(\ShvA))-\cof) \subseteq (\iota(CW_{\geq 0}) \otimes \Ch(\ShvA) )-\cof.$$
	Since $\iota$ preserves trivial cofibrations we have $\iota(CW_{\geq 0}) \subseteq CW$.
	Since $\Ch(\ShvA)$ satisfies the monoid axiom (see~\cite{bonart2022paper1}), it follows that
	$$(\iota(CW_{\geq 0}) \otimes \Ch(\ShvA) )-\cof \subseteq (CW \otimes \Ch(\ShvA) )-\cof \subseteq W.$$
	Hence $\Ch_{\geq 0}(\ShvA)$ satisfies the monoid axiom.
\end{proof}

\begin{lem}\label{lemmaIgeq0}
	Let $I_{\Ch(\ShvA)}$ be a set of generating cofibrations for $\Ch(\ShvA)$. Then 
	the set $\tau_{\text{naive}}(I_{\Ch(\ShvA)})$ is a set of generating cofibrations of $\Ch_{\geq 0}(\ShvA)$.
	In particular, the model category $\Ch_{\geq 0}(\ShvA)$ has a set of generating cofibrations with finitely presented domains and codomains.
\end{lem}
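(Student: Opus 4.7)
The plan is to use the adjunction $\tau_{\text{naive}} \dashv \iota$ to transport the generating cofibrations $I_{\Ch(\ShvA)}$ across and then verify Quillen's recognition criterion for the model structure built in Lemma~\ref{lemmacofibgen}. Concretely, I will establish the identity
\[
  \tau_{\text{naive}}(I_{\Ch(\ShvA)})\text{-inj} \;=\; \text{trivial fibrations in } \Ch_{\geq 0}(\ShvA),
\]
check that $\tau_{\text{naive}}(I_{\Ch(\ShvA)})$ consists of cofibrations in $\Ch_{\geq 0}(\ShvA)$, and confirm that its domains are small enough for the small object argument. The cofibration claim is immediate: for any $f \in I_{\Ch(\ShvA)}$, Lemma~\ref{iotaLcofib} gives that $\iota\tau_{\text{naive}} f$ is a cofibration in $\Ch(\ShvA)$, which by the definition of the transferred model structure means $\tau_{\text{naive}} f$ is a cofibration in $\Ch_{\geq 0}(\ShvA)$.

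The main work, and the step I expect to be the principal obstacle, is to show that $p$ is a trivial fibration in $\Ch_{\geq 0}(\ShvA)$ if and only if $\iota p$ is a trivial fibration in $\Ch(\ShvA)$. For the forward direction I would essentially reuse the argument already present inside the proof of Lemma~\ref{lemmacofibgen}: given any cofibration $f$ in $\Ch(\ShvA)$, a lifting problem for $\iota p$ against $f$ transposes via $\tau_{\text{naive}} \dashv \iota$ to a lifting problem for $p$ against $\tau_{\text{naive}} f$, and $\tau_{\text{naive}} f$ is a cofibration in $\Ch_{\geq 0}(\ShvA)$ by Lemma~\ref{iotaLcofib}, so the lift exists and transposes back. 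For the reverse direction the full faithfulness of $\iota$ puts lifting problems for $p$ against a cofibration $g$ in $\Ch_{\geq 0}(\ShvA)$ in bijection with lifting problems for $\iota p$ against the cofibration $\iota g$ in $\Ch(\ShvA)$, so existence of solutions transfers directly. Combined with the adjunction isomorphism asserting that $p$ has RLP w.r.t.\ $\tau_{\text{naive}}(I_{\Ch(\ShvA)})$ iff $\iota p$ has RLP w.r.t.\ $I_{\Ch(\ShvA)}$, this yields the central identity, and the standard cofibrantly-generated argument (cofibrations = LLP of trivial fibrations) then shows that $\tau_{\text{naive}}(I_{\Ch(\ShvA)})$ generates the cofibrations of $\Ch_{\geq 0}(\ShvA)$.

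For the finite presentation claim, I would inspect the generators $\Cor(-,X)_{\nis} \otimes S^n\bb Z \to \Cor(-,X)_{\nis} \otimes D^n\bb Z$ case by case on $n$, mirroring the trichotomy in the proof of Lemma~\ref{iotaLcofib}: the map is either left unchanged, collapses to $0 \to 0$, or becomes the map $0 \to \Cor(-,X)_{\nis} \otimes S^0\bb Z$. In every case the resulting domains and codomains are bounded complexes of finite direct sums of the representable sheaves $\Cor(-,X)_{\nis}$, which are compact in $\ShvA$, hence finitely presented in $\Ch_{\geq 0}(\ShvA)$. Abstractly one also observes that $\iota$ preserves filtered colimits (being itself a left adjoint to $\tau_{\text{good}}$), so $\tau_{\text{naive}}$ preserves finitely presented objects. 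This simultaneously validates the small object argument invoked in the previous paragraph, completing the proof.
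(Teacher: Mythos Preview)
Your argument is correct, but it takes a different route from the paper's. The paper works directly on the cofibration side: given any cofibration $f$ in $\Ch_{\geq 0}(\ShvA)$, one has $\iota f \in I_{\Ch(\ShvA)}\text{-}\cof$, and since $\tau_{\text{naive}}$ is a left adjoint it carries $I_{\Ch(\ShvA)}\text{-}\cof$ into $\tau_{\text{naive}}(I_{\Ch(\ShvA)})\text{-}\cof$; the counit isomorphism $\tau_{\text{naive}}\iota f \cong f$ then finishes the job in one line. Your approach instead characterises the trivial fibrations of $\Ch_{\geq 0}(\ShvA)$ as exactly those $p$ with $\iota p$ a trivial fibration in $\Ch(\ShvA)$, and then uses the adjunction lifting bijection to identify $\tau_{\text{naive}}(I_{\Ch(\ShvA)})\text{-inj}$ with the trivial fibrations. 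The paper's route is shorter and avoids analysing fibrations altogether; yours yields, as a by-product, an independent description of the trivial fibrations in the transferred model structure, which can be useful elsewhere. For the finite-presentation claim both arguments coincide (the same case split on $n$), and your abstract remark that $\tau_{\text{naive}}$ preserves finitely presented objects because its right adjoint $\iota$ preserves filtered colimits is a clean way to justify the small object argument that the paper leaves implicit.
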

\begin{proof}
	By Lemma \ref{iotaLcofib} all morphisms from $\tau_{\text{naive}}(I_{\Ch(\ShvA)})$ are cofibrations in $\Ch_{\geq 0}(\ShvA)$.
	Let $f$ be a cofibration in $\Ch_{\geq 0}(\ShvA)$. We claim that $f \in (\tau_{\text{naive}}(I_{\Ch(\ShvA)}))-\cof.$
	Since $f$ is a cofibration in $\Ch_{\geq 0}(\ShvA)$, also $\iota f$ is a cofibration in $\Ch(\ShvA)$. Since $I_{\Ch(\ShvA)}$ is a set of generating cofibrations for $\Ch(\ShvA)$, it follows that $\iota f \in I_{\Ch(\ShvA)}-\cof.$
	But then $$f \cong \tau_{\text{naive}}\iota f \in \tau_{\text{naive}}(I_{\Ch(\ShvA)}-\cof) \subseteq (\tau_{\text{naive}}(I_{\Ch(\ShvA)}))-\cof.$$
	Therefore $\tau_{\text{naive}}(I_{\Ch(\ShvA)})$ is a set of generating cofibrations for $\Ch_{\geq 0}(\ShvA)$.
	
	Since the set
	$$\{\Cor(-,X)_{\nis} \otimes S^n \bb Z \rightarrow \Cor(-,X)_{\nis} \otimes D^n \bb Z \mid X \in \Smk, n \in \bb Z \}$$
	is a set of generating cofibrations with finitely presented domains and codomains for $\Ch(\ShvA)$, it follows that $$\{\Cor(-,X)_{\nis} \otimes S^n \bb Z \rightarrow \Cor(-,X)_{\nis} \otimes D^n \bb Z \mid X \in \Smk, n \geq 0 \} \cup \{ 0 \rightarrow \Cor(-,X)_{\nis} \otimes S^0\bb Z \mid X \in \Smk \}$$
	is a set of generating cofibrations with finitely presented domains and codomains of $\Ch_{\geq 0}(\ShvA)$.
\end{proof}

Next, we want to show that $\Ch_{\geq 0}(\ShvA)$ is weakly finitely generated. To this end, 
we need to define a set of weakly generating trivial cofibrations $J^\prime$.
For this we need to construct a certain set of morphisms similar to \cite[Definition 3.3]{bonart2022paper1}.
\begin{dfn}
	For every elementary Nisnevich square $Q \in \cc Q$ of the form $$\xymatrix{ U^\prime \ar[r]^\beta \ar[d]^\alpha & X^\prime \ar[d]^\gamma \\
		U \ar[r]^\delta & X} $$
	we have a square 
	$$ \xymatrix{ \Cor(-,U^\prime)_{\nis} \ar[r]^{\beta_*} \ar[d]^{\alpha_*} & \Cor(-,X^\prime)_{\nis} \ar[d]^{\gamma_*} \\
		\Cor(-,U)_{\nis} \ar[r]^{\delta_*}  & \Cor(-,X)_{\nis} } $$
	in $\Ch(\ShvA)$.
	Take the homological mapping cyinder $C$ of the map $\Cor(-,U^\prime)_{\nis} \rightarrow \Cor(-,X^\prime)_{\nis}$, so that the map factors as  $\xymatrix{\Cor(-,U^\prime)_{\nis} \: \ar[r]& C \ar[r] & \Cor(-,X^\prime)_{\nis} }$. Define an object $s_Q := \Cor(-,U)_{\nis} \underset{\Cor(-,U^{\prime})_{\nis}}{\coprod} C$.
	Next take the homological mapping cylinder $t_Q$ of the map $s_Q = \Cor(-,U)_{\nis} \underset{\Cor(-,U^{\prime})_{\nis}}{\coprod} C \rightarrow \Cor(-,X)_{\nis}$, so that it factors as $\xymatrix{s_Q \: \ar[r]^{p_Q}& t_Q \ar[r] & \Cor(-,X)_{\nis} }$.
	The map $p_Q : s_Q \rightarrow t_Q$ is a trivial cofibration between finitely presented objects of $\Ch_{\geq 0}(\ShvA)$.
	
	Let $\mathcal{Q}$ be the set of all elementary Nisnevich squares. Define
	a set of morphisms $J_{\mathcal{Q}} := \{ p_Q \mid Q \in \mathcal{Q}  \}.$
	Let $I_{\Ch_{\geq 0}(\Ab)}$ be a set of generating cofibrations with finitely presented domains and codomains for Quillen's standard projective model structure on $\Ch(\Ab)_{\geq 0}$. We define sets of morphisms in $\Ch_{\geq 0}(\ShvA)$
	$$J_{\proj} := \{ 0 \rightarrow \Cor(-,X)_{\nis} \otimes D^n\bb Z \mid X \in \Smk, n \geq 0 \}$$
	and
	$$J^\prime:=J_{\proj} \cup (J_{\mathcal{Q}} \square I_{\Ch_{\geq 0}(\Ab)}),$$
	where $J_{\mathcal{Q}} \square I_{\Ch_{\geq 0}(\Ab)}$ is the set of all morphisms which are a pushout product of a morphisms from $J_{\mathcal{Q}}$ and $I_{\Ch(\Ab)_{\geq 0}}$.
\end{dfn}
Note that all morphisms from $I_{\Ch_{\geq 0}(\Ab)}$ are cofibrations and all morphisms from $J_{\proj}$ and $J_{\mathcal{Q}}$ are trivial cofibrations. Since $\Ch_{\geq 0}(\ShvA)$ is a monoidal model category it follows that all morphisms from $J^\prime$ are trivial cofibrations.

\begin{lem}\label{lemmaJproj}
	A morphism $f : A \rightarrow B$ in $\Ch_{\geq 0}(\ShvA)$ has the right lifting property with respect to $J_{\proj}$ if and only if for every $n \geq 1$ the map $f_n : A_n \rightarrow B_n$ is sectionwise surjective.
\end{lem}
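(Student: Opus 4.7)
The plan is to unwind the lifting condition by adjunction and the Yoneda lemma. First, I would fix the convention for $D^n\bb Z$: the proof of Lemma~\ref{iotaLcofib} (where $\iota\tau_{\text{naive}}$ sends $S^{-1}\bb Z\to D^{-1}\bb Z$ to $0\to S^0\bb Z$) forces $D^n\bb Z$ to be the complex with $\bb Z$ in degrees $n$ and $n+1$ and identity differential. Tensoring with the representable sheaf $\Cor(-,X)_{\nis}$ is degreewise, so $\Cor(-,X)_{\nis}\otimes D^n\bb Z$ has $\Cor(-,X)_{\nis}$ in degrees $n$ and $n+1$ with identity differential.

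Second, I would observe that a chain map $\phi:\Cor(-,X)_{\nis}\otimes D^n\bb Z\to B$ in $\Ch_{\geq 0}(\ShvA)$ is completely determined by its component in degree $n+1$. Indeed, the identity differential of $D^n\bb Z$ forces $\phi_n=\partial^B_{n+1}\phi_{n+1}$, and the chain map equation in all other degrees is automatic. By the Yoneda lemma applied in the Grothendieck category $\ShvA$ (using that each $B_{n+1}$ is already a Nisnevich sheaf), maps $\Cor(-,X)_{\nis}\to B_{n+1}$ are in natural bijection with sections $B_{n+1}(X)$.

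Third, I would translate the lifting diagram
\[
\xymatrix{ 0\ar[r]\ar[d] & A\ar[d]^f \\ \Cor(-,X)_{\nis}\otimes D^n\bb Z \ar[r] \ar@{-->}[ur] & B }
\]
through this bijection: specifying the bottom map is the same as choosing an element $b\in B_{n+1}(X)$, and finding a lift is the same as finding $a\in A_{n+1}(X)$ with $f_{n+1}(X)(a)=b$. Hence, for a fixed pair $(X,n)$ with $n\geq 0$, the RLP against $0\to\Cor(-,X)_{\nis}\otimes D^n\bb Z$ is equivalent to surjectivity of $f_{n+1}(X):A_{n+1}(X)\to B_{n+1}(X)$.

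Finally, ranging $X$ over $\Smk$ and $n$ over $n\geq 0$, I would conclude that $f$ has the right lifting property with respect to $J_{\proj}$ if and only if $f_m(X)$ is surjective for every $X\in\Smk$ and every $m\geq 1$, which is precisely the claim. There is no real obstacle here beyond the bookkeeping of degrees; the only care needed is matching the convention for $D^n\bb Z$ so that the index shift $m=n+1$ yields the stated range $m\geq 1$.
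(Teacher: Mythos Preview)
Your proof is correct and is essentially the same argument as the paper's: both reduce the lifting problem against $0\to\Cor(-,X)_{\nis}\otimes D^n\bb Z$ to surjectivity of $f_{n+1}(X)$ via Yoneda and the observation that a chain map out of $D^n\bb Z$ is determined by its degree-$(n+1)$ component. The paper states this equivalence in a single line, whereas you spell out the degree conventions and the index shift explicitly.
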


\begin{proof}
	For every $n \geq 0, X \in \Smk$ we can solve the lifting problem
	$$\xymatrix{0 \ar[d] \ar[r] & A \ar[d]^f \\
	            \Cor(-,X)_{\nis} \otimes D^n\bb Z \ar[r] \ar@{..>}[ur] & B} $$
            in $\Ch_{\geq 0}(\ShvA)$ if and only if $f_{n+1}: A(X)_{n+1} \rightarrow B(X)_{n+1}$ is surjective in $\Ab$.
\end{proof}

\begin{lem}\label{lemmaifibrant}
	For an object $A$ in $\Ch_{\geq 0}(\ShvA)$ the following are equivalent:
	\begin{enumerate}
		\item $\iota(A)$ is fibrant in $\Ch(\ShvA)$.
		\item $A$ is fibrant in $\Ch_{\geq 0}(\ShvA)$.
		\item $A \rightarrow 0$ has the right lifting property with respect to $J_{\mathcal{Q}} \square I_{\Ch_{\geq 0}(\Ab)}$.
	\end{enumerate}
\end{lem}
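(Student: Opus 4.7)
The plan is to prove the cycle of implications $(1) \Rightarrow (2) \Rightarrow (3) \Rightarrow (1)$. For $(1) \Rightarrow (2)$, the key point is that the inclusion $\iota$ is fully faithful and, by construction of the left-transferred model structure, sends trivial cofibrations of $\Ch_{\geq 0}(\ShvA)$ to trivial cofibrations of $\Ch(\ShvA)$. Hence, given a trivial cofibration $g : B \rightarrow C$ in $\Ch_{\geq 0}(\ShvA)$ and a lifting problem against $A \rightarrow 0$, applying $\iota$ produces a lifting problem in $\Ch(\ShvA)$ that is solvable by fibrancy of $\iota(A)$, and full faithfulness of $\iota$ returns the resulting lift $\iota(C) \rightarrow \iota(A)$ to a lift $C \rightarrow A$ in $\Ch_{\geq 0}(\ShvA)$.

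For $(2) \Rightarrow (3)$, the implication is immediate: every morphism in $J_{\mathcal{Q}} \square I_{\Ch_{\geq 0}(\Ab)}$ is a trivial cofibration in $\Ch_{\geq 0}(\ShvA)$, as observed just before Lemma~\ref{lemmaJproj}, and a fibrant object has the right lifting property against every trivial cofibration.

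The substantive direction is $(3) \Rightarrow (1)$. I plan to invoke the analogous characterization of fibrancy in $\Ch(\ShvA)$ from \cite{bonart2022paper1}, which expresses fibrancy as the right lifting property against a set of weakly generating trivial cofibrations of the shape $\{0 \rightarrow \Cor(-,X)_{\nis} \otimes D^n\bb Z \mid X \in \Smk,\ n \in \bb Z\} \cup (J_{\mathcal{Q}} \square I_{\Ch(\Ab)})$. Lifts of $\iota(A) \rightarrow 0$ against the first set are automatic, since the target is $0$. For a pushout product $j \square i$ with $j : s_Q \rightarrow t_Q$ in $J_{\mathcal{Q}}$ and $i: S^{n-1}\bb Z \rightarrow D^n\bb Z$ in $I_{\Ch(\Ab)}$, when $n \geq 1$ the generator $i$ already lies in $I_{\Ch_{\geq 0}(\Ab)}$, so the lifting problem is obtained by applying $\iota$ to the corresponding problem in $\Ch_{\geq 0}(\ShvA)$, which is solved by (3) and full faithfulness of $\iota$. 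When $n \leq 0$, passing through the adjunction $\tau_{\text{naive}} \dashv \iota$ reduces the lifting problem for $\iota(A) \rightarrow 0$ to one in $\Ch_{\geq 0}(\ShvA)$ for $\tau_{\text{naive}}(j \square i)$ against $A \rightarrow 0$.

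The hard part will be this final case $n \leq 0$. Since $\tau_{\text{naive}}$ does not commute with tensor products, the morphism $\tau_{\text{naive}}(j \square i)$ is not simply $j \square \tau_{\text{naive}}(i)$, so a direct degree-by-degree analysis is required. Using that $\tau_{\text{naive}}$ preserves pushouts and that $s_Q, t_Q$ are non-negatively graded, one should be able to exhibit $\tau_{\text{naive}}(j \square i)$ as a relative cell complex built from $J_{\mathcal{Q}} \square I_{\Ch_{\geq 0}(\Ab)}$, at which point the hypothesis (3) supplies the needed lift and closes the implication.
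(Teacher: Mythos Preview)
Your implications $(1)\Rightarrow(2)$ and $(2)\Rightarrow(3)$ match the paper's (the paper phrases $(1)\Rightarrow(2)$ via $A\cong\tau_{\text{good}}\iota(A)$ and the fact that $\tau_{\text{good}}$ is right Quillen, which is the same content as your full-faithfulness argument).

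For $(3)\Rightarrow(1)$ your route diverges from the paper's, and the divergence is exactly where you run into your self-declared ``hard part''. The paper does \emph{not} attempt to lift $\iota(A)\to 0$ directly against the unbounded set $J_{\mathcal{Q}}\square I_{\Ch(\Ab)}$. Instead it invokes \cite[Lemma~3.4]{bonart2022paper1}, which characterizes fibrancy of $\iota(A)$ in $\Ch(\ShvA)$ intrinsically: $A(\emptyset)\simeq 0$ and $A$ takes every elementary Nisnevich square $Q$ to a homotopy pullback square. The first condition is free since $A$ is a complex of sheaves. For the second, the paper rephrases ``$A(Q)$ is a homotopy pullback'' as ``$p_Q^*:\mathrm{map}^{\Ch}(t_Q,A)\to\mathrm{map}^{\Ch}(s_Q,A)$ is a quasi-isomorphism'', where the mapping complex is taken \emph{in} $\Ch_{\geq 0}(\ShvA)$. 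Because this map already lives in $\Ch_{\geq 0}(\Ab)$, it suffices to show it is a trivial fibration there, i.e.\ to test only against $I_{\Ch_{\geq 0}(\Ab)}$; adjunction then converts this precisely into the hypothesis~(3). Thus the paper never sees a negative-degree generator at all.

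Your plan, by contrast, commits you to analysing $\tau_{\text{naive}}(j\square i)$ for $i:S^{n-1}\bb Z\to D^n\bb Z$ with $n\leq 0$. Since $s_Q,t_Q$ are concentrated in degrees $0,1$, the cases $n\leq -2$ are indeed trivial (everything truncates to zero), but $n=0$ and $n=-1$ require an explicit computation, and your claim that $\tau_{\text{naive}}(j\square i)$ is a relative $(J_{\mathcal{Q}}\square I_{\Ch_{\geq 0}(\Ab)})$-cell complex is asserted rather than proved. This is a genuine gap: $\tau_{\text{naive}}$ does not commute with $\otimes$ or with pushout products, so you cannot read off the cell structure formally, and I do not see an obvious reason the truncated map must lie in that saturated class rather than merely being some trivial cofibration. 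The paper's detour through the homotopy-pullback criterion is what buys you the restriction to non-negative $n$ for free; I would recommend adopting it rather than trying to push the degreewise analysis through.
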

\begin{proof}
$(1)\Longrightarrow(2)$.
	If $\iota(A)$ is fibrant in $\Ch(\ShvA)$, then $A \cong \tau_{\text{good}}(\iota(A))$ is fibrant in $\Ch_{\geq 0}(\ShvA)$ because $\tau_{\text{good}}$ is a right Quillen functor.
	
$(2)\Longrightarrow(3)$.	If $A$ is fibrant in $\Ch_{\geq 0}(\ShvA)$, then $A \rightarrow 0$ has the right lifting property with respect to all trivial cofibrations, hence it has the right lifting property with respect to $J_{\mathcal{Q}} \square I_{\Ch_{\geq 0}(\Ab)}$. 
	
$(3)\Longrightarrow(1)$. Assume that $A \rightarrow 0$ has the right lifting property with respect to $J_{\mathcal{Q}} \square I_{\Ch_{\geq 0}(\Ab)}$. We want to show that $\iota(A)$ is fibrant in $\Ch(\ShvA)$.
	By \cite[Lemma 3.4]{bonart2022paper1} we have to show that $A(\emptyset) \rightarrow 0$ is a quasi-isomorphism, and that $A$ sends elementary Nisnevich squares to homotopy pullback squares.
	Since $A$ is a chain complex of sheaves, we have $A(\emptyset) = 0$. Let us now show that $A$ sends elementary Nisnevich squares to homotopy pullback squares. Let $Q$ be an elementary Nisnevich square.
	For $X, Y \in \Ch_{\geq0}(\ShvA)$ let $\inthom{\Ch_{\geq 0}(\ShvA)}{X}{Y}$ be the internal hom of $\Ch_{\geq 0}(\ShvA)$ and let $$\mathrm{map}^{\Ch}(X,Y) \in \Ch_{\geq 0}(\Ab)$$ be defined by $$\mathrm{map}^{\Ch}(X,Y) := \inthom{\Ch_{\geq 0}(\ShvA)}{X}{Y}(pt).$$
	
	The square $A(Q)$ will be a homotopy pullback square in $\Ch(\Ab)$ if and only if the map
	$$p_Q^* :  \mathrm{map}^{\Ch}(t_Q,A) \rightarrow \mathrm{map}^{\Ch}(s_Q,A) $$
	is a quasi-isomorphism in $\Ch_{\geq 0}(\Ab)$.
	To show that $p_Q^*$ is a quasi-isomorphism, it suffices to show that $p_Q^*$ is a trivial fibration in $\Ch_{\geq 0}(\Ab)$. For that we need to show that  $p_Q^*$ has the right lifting property with respect to $I_{\Ch(\Ab)_{\geq 0}}$.  Now for every map $f : M \rightarrow N$ in $I_{\Ch(\Ab)_{\geq 0}}$ a square
	$$\xymatrix{M \ar[d]_f \ar[r] & \mathrm{map}^{\Ch}(t_Q,A) \ar[d]^{p_Q^*}  \\
		N \ar[r] \ar@{..>}[ur] & \mathrm{map}^{\Ch}(s_Q,A) }$$
	has a lift in $\Ch_{\geq 0}(\Ab)$ if and only if the square
	$$\xymatrix{ t_Q \otimes M \underset{s_Q \otimes M}{\coprod} s_Q \otimes N \ar[d]_{p_Q \square f} \ar[r] & A \ar[d]  \\
		t_Q \otimes N \ar[r] \ar@{..>}[ur] & 0 }$$
	has a lift in $\Ch_{\geq 0}(\ShvA)$. This lift exists, because $A \rightarrow 0$ has the right lifting property with respect to $J_{\mathcal{Q}} \square I_{\Ch(\Ab)_{\geq 0}}$.
\end{proof}

In what follows, let $\Ch(\PshA)_{\proj}$ be the model category $\Ch(\PshA)$ with standard projective model structure. Let $\Ch(\PshA)_{\nis}$ be the model category $\Ch(\PshA)$ with local projective model structure. See \cite[Section 3]{bonart2022paper1} for details.
	Let $L_{\nis} : \Ch(\PshA) \leftrightarrows \Ch(\Shv)A : U_{\nis}$ be the adjunction consisting of the sheafification and the forgetful functors.

\begin{prop}\label{propJprimegeq0}
	Let $f : A \rightarrow B$ be a morphism in $\Ch_{\geq 0}(\ShvA)$ such that $B$ is fibrant and $f$ has the right lifting property with respect to $J^\prime$.
	Then $f$ is a fibration in $\Ch_{\geq 0}(\ShvA)$.
\end{prop}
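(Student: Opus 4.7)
The proof follows the structure of the analogous proposition for $\Ch(\ShvA)$ in \cite[Section 3]{bonart2022paper1}: small object argument, retract argument, and a Bousfield-localization reduction.

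First, I would apply the small object argument with respect to $J^\prime$ to factor $f = q \circ i$, where $i : A \to A'$ is a transfinite composition of pushouts of maps in $J^\prime$ and $q : A' \to B$ has the right lifting property with respect to $J^\prime$. Since each element of $J^\prime$ is a trivial cofibration and trivial cofibrations are closed under pushouts and transfinite compositions, $i$ is itself a trivial cofibration. Because $f$ inherits the right lifting property against $J^\prime$-cell complexes, there is a lift $r : A' \to A$ with $r \circ i = \id_A$ and $f \circ r = q$, exhibiting $f$ as a retract of $q$ in the arrow category of $\Ch_{\geq 0}(\ShvA)$. Since the class of fibrations is closed under retracts, it suffices to prove that $q$ is a fibration.

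Next, I would show that $A'$ is fibrant. The composite $A' \xrightarrow{q} B \to 0$ has the right lifting property with respect to $J^\prime$: the map $q$ does by construction, and $B \to 0$ does by fibrancy of $B$, using Lemma \ref{lemmaifibrant} for the $J_{\mathcal{Q}} \square I_{\Ch_{\geq 0}(\Ab)}$-part and Lemma \ref{lemmaJproj} (combined with fibrancy) for the $J_{\proj}$-part. In particular $A' \to 0$ has the right lifting property with respect to $J_{\mathcal{Q}} \square I_{\Ch_{\geq 0}(\Ab)}$, and Lemma \ref{lemmaifibrant} yields that $A'$ is fibrant.

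It then remains to show that $q : A' \to B$ is a local fibration. By the standard theory of left Bousfield localizations — the local model structure on $\Ch_{\geq 0}(\ShvA)$ being obtained from a pointwise projective structure by enforcing Nisnevich descent — a morphism between locally fibrant objects is a local fibration if and only if it is a projective (sectionwise) fibration. Since both $A'$ and $B$ are locally fibrant, it suffices to verify that $q$ is a projective fibration. But Lemma \ref{lemmaJproj} shows that the right lifting property of $q$ with respect to $J_{\proj}$ forces $q_n$ to be sectionwise surjective for every $n \geq 1$, which is precisely Quillen's projective fibration condition in $\Ch_{\geq 0}(\Ab)$ applied sectionwise. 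Hence $q$ is a projective fibration between locally fibrant objects, therefore a local fibration.

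The main obstacle is the invocation of the Bousfield-localization principle in the last step — that a morphism between locally fibrant objects is a local fibration iff it is a projective fibration — in the current setting of $\Ch_{\geq 0}(\ShvA)$. This requires that the local structure be realized (up to Quillen equivalence) as a left Bousfield localization of a pointwise projective structure, and that the usual technical hypotheses (cellularity, left properness, finite presentability of the generating cofibrations) hold; all of these have been established or follow from the corresponding facts for $\Ch(\ShvA)$ proved in \cite[Section 3]{bonart2022paper1} together with the earlier lemmas of this section.
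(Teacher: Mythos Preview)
Your overall strategy is reasonable, but there is a genuine gap in the final step, and the opening detour is unnecessary.

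First, the small-object/retract argument is redundant: $f$ itself already has the right lifting property with respect to $J'$ and $B$ is fibrant, so everything you prove about $q$ you could prove directly about $f$. Nothing is gained by passing to $q$.

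The real problem is your invocation of the Bousfield-localization principle (Hirschhorn 3.3.16) at the level of $\Ch_{\geq 0}(\ShvA)$. You assert that ``the local model structure on $\Ch_{\geq 0}(\ShvA)$ [is] obtained from a pointwise projective structure by enforcing Nisnevich descent,'' but that is not how the paper constructs it: the local structure on $\Ch_{\geq 0}(\ShvA)$ is defined by \emph{left transfer} along $\iota:\Ch_{\geq 0}(\ShvA)\to\Ch(\ShvA)$, not as a Bousfield localization of any projective structure on $\Ch_{\geq 0}(\ShvA)$. No such projective model structure on $\Ch_{\geq 0}(\ShvA)$ has been set up here, and ``up to Quillen equivalence'' is not enough for Hirschhorn 3.3.16; one needs the actual identity functor to be a localization. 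So your reduction ``local fibration between fibrant objects $\Leftrightarrow$ projective fibration'' is unjustified in this category.

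The paper's proof sidesteps this by pushing the question into $\Ch(\PshA)$, where the projective and Nisnevich-local structures \emph{are} a Bousfield-localization pair (this is what \cite[Section 3]{bonart2022paper1} establishes). The difficulty there is that $\iota(f)$ is only known to be sectionwise surjective in degrees $\geq 1$ (from $J_{\proj}$), not in degree $0$, so $\iota(f)$ need not be a projective fibration in $\Ch(\PshA)$. The paper fixes this with a concrete trick: it adds the acyclic summand $D^{-1}B_0$ to the source, producing a map $\iota(f)+g:\iota(A)\oplus D^{-1}B_0\to\iota(B)$ that \emph{is} surjective in every degree (the extra $B_0$ handles degree $0$), hence a projective fibration, hence a local fibration between local-fibrant objects by Hirschhorn 3.3.16 applied in $\Ch(\PshA)_{\nis}$. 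Finally $\tau_{\text{good}}(\iota(f)+g)\cong f$, and since $\tau_{\text{good}}$ is right Quillen, $f$ is a fibration in $\Ch_{\geq 0}(\ShvA)$. That degree-$0$ correction is the missing idea in your argument.
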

\begin{proof}	
	Our first claim is that $A$ is fibrant.
	Since $B$ is fibrant, by Lemma \ref{lemmaifibrant} $B \rightarrow 0$ has the right lifting property with respect to $J_{\mathcal{Q}} \square I_{\Ch_{\geq 0}(\Ab)}$. Since $f$ has the right lifting property with respect to $J_{\mathcal{Q}} \square I_{\Ch_{\geq 0}(\Ab)}$ it follows that $A \rightarrow 0$ has the right lifting property with resepct to $J_{\mathcal{Q}} \square I_{\Ch_{\geq 0}(\Ab)}$. Lemma \ref{lemmaifibrant} implies $A$ is fibrant.
	
	Next, let $D^{-1}B_0 \in \Ch(\ShvA)$ denote the chain complex
	$$\dots 0 \rightarrow 0 \rightarrow B_0 \overset{id}{\rightarrow} B_0 \rightarrow 0 \rightarrow \dots$$
	that is $B_0$ in degree $0$ and $-1$, and which is $0$ everywhere else.
	We claim that $D^{-1}B_0$ is fibrant in $\Ch(\ShvA)$. Indeed, the map $U_{\nis}D^{-1}B_0 \rightarrow 0$ is a trivial fibration in $\Ch(\PshA)_{\proj}$, hence it is also a trivial fibration in $\Ch(\PshA)_{\nis}$. Therefore $D^{-1}B_0 \rightarrow 0$ is a trivial fibration in $\Ch(\ShvA)$, and so
	$D^{-1}B_0$ is fibrant. Note that $\tau_{\text{good}}(D^{-1}B_0) = 0$ in $\Ch_{\geq 0}(\ShvA)$.
	
	In particular, $\iota(A) \oplus D^{-1}B_0$ is fibrant in $\Ch(\ShvA)$ and we have that
	$$\tau_{\text{good}}(\iota(A) \oplus D^{-1}B_0) \cong \tau_{\text{good}}(\iota(A)) \oplus \tau_{\text{good}}(D^{-1}B_0) \cong A \oplus 0 = A.$$
	
	Define $g : D^{-1}B_0 \rightarrow \iota(B)$ in $\Ch(\ShvA)$ as the map
	$$\xymatrix{ \dots \ar[r] \ar[d] & 0 \ar[r] \ar[d]& B_0 \ar[r]^{id} \ar[d]^{id}& B_0 \ar[r] \ar[d]& 0 \ar[r] \ar[d]& \dots \\
	\dots \ar[r] & B_1 \ar[r] & B_0 \ar[r] & 0 \ar[r] & 0 \ar[r] & \dots   }$$
	
	Then $\iota(f) + g : \iota(A) \oplus D^{-1}B_0 \rightarrow \iota(B)$ is a map between fibrant objects, and we have a commutative diagram where the horizontal maps are isomorphisms
	$$\xymatrix{ \tau_{\text{good}}(\iota(A) \oplus D^{-1}B_0) \ar[r]^(0.65)\sim \ar[d]_{\tau_{\text{good}}(\iota(f)+g)}  & A \oplus 0 \ar[d]^{f+0} \ar@{=}[r] & A \ar[d]^f \\ 
	\tau_{\text{good}}(\iota(B)) \ar[r]^\sim & B \ar@{=}[r] & B } $$
	We want to show that $f$ is a fibration in $\Ch_{\geq 0}(\ShvA)$.
	Since $\tau_{\text{good}}$ is a right Quillen functor, we now just need to show that $\iota(f) + g$ is a fibration in $\Ch(\ShvA)$.
	For this it suffices to show that $U_{\nis}(\iota(f) + g)$ is a fibration in $\Ch(\PshA)_{\nis}$.
	Since $U_{\nis}\iota(A \oplus D^{-1}B_0)$ and $U_{\nis}\iota(B)$ are fibrant in $\Ch(\PshA)_{\nis}$, it suffices by \cite[Proposition 3.3.16]{hirschhorn2003model} to show that $U_{\nis}(\iota(f) + g)$ is a fibration in $\Ch(\PshA)_{\proj}$.
	So we have to show that the map $\iota(f) + g$ is sectionwise an epimorphism in $\Ch(\Ab)$.
	In degree $n \geq 1$ the map $\iota(f) : \iota(A) \rightarrow \iota(B)$ is sectionwise surjective, because of Lemma \ref{lemmaJproj} and the fact that $f$ satisfies the right lifting property with respect to $J_{\proj}$.
	In degree $n\leq -1$ the map  $\iota(f) + g$ is sectionwise surjective, because $\iota(B)_n = 0$.
	Finally, in degree $n = 0$ the map $\iota(f) + g$ is sectionwise surjective, because $g : D^{-1}B_0 \rightarrow \iota(B)$ is sectionwise surjective in degree $0$.
	So $U_{\nis}(\iota(f) + g)$ is a fibration in $\Ch(\PshA)_{\proj}$. Then $\iota(f) + g$ is a fibration in $\Ch(\ShvA)$, and then $f \cong \tau_{\text{good}}(\iota(f) + g)$ is a fibration in $\Ch_{\geq 0}(\ShvA)$.
\end{proof}

\begin{cor}\label{corweakfingen}
	$\Ch_{\geq 0}(\ShvA)$ is weakly finitely generated and $J^\prime$ is a set of weakly generating trivial cofibrations for $\Ch_{\geq 0}(\ShvA)$.
\end{cor}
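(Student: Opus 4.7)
The plan is to unpack the definition of \emph{weakly finitely generated} (in the sense of Dundas--R\o ndigs--\O stv\ae r) and verify each of its requirements using the lemmas and proposition just established. Recall that a cofibrantly generated model category is weakly finitely generated if it admits a set $I$ of generating cofibrations with finitely presented domains and codomains, together with a set $J^\prime$ of trivial cofibrations whose codomains are finitely presented, such that a morphism $f:A\to B$ with fibrant codomain $B$ is a fibration if and only if it has the right lifting property with respect to $J^\prime$.

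The first requirement on $I$ is exactly the conclusion of Lemma~\ref{lemmaIgeq0}, which supplies an explicit set of generating cofibrations with finitely presented domains and codomains. For the second requirement, I would verify that every morphism of $J^\prime$ is a trivial cofibration between finitely presented objects. This is noted after the definition: morphisms of $J_{\proj}$ are trivial cofibrations with $0$ and $\Cor(-,X)_{\nis}\otimes D^n\bb Z$ as domain and codomain (finitely presented); morphisms of $J_{\mathcal Q}$ are trivial cofibrations $p_Q:s_Q\to t_Q$ between finitely presented objects (since $s_Q$ and $t_Q$ are obtained as finite colimits of representables $\Cor(-,X)_{\nis}$); the generators of $I_{\Ch_{\geq 0}(\Ab)}$ can be chosen with finitely presented domains and codomains, and pushout-products of morphisms between finitely presented objects have finitely presented domain and codomain. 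Since $\Ch_{\geq 0}(\ShvA)$ is a monoidal model category by the previous lemma, the resulting pushout-products $J_{\mathcal Q}\square I_{\Ch_{\geq 0}(\Ab)}$ are trivial cofibrations.

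The third, characterizing requirement is exactly what Proposition~\ref{propJprimegeq0} delivers in one direction: if $B$ is fibrant and $f$ has the right lifting property with respect to $J^\prime$, then $f$ is a fibration. The reverse direction is immediate because every element of $J^\prime$ has already been shown to be a trivial cofibration, and fibrations lift against trivial cofibrations by definition of the model structure transferred in Lemma~\ref{lemmacofibgen}. Combining these observations yields both assertions of the corollary: $\Ch_{\geq 0}(\ShvA)$ is weakly finitely generated with $I$ from Lemma~\ref{lemmaIgeq0} and $J^\prime$ as defined above, and $J^\prime$ is a set of weakly generating trivial cofibrations.

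No real obstacle remains to overcome, since the substantive content has been packaged into Proposition~\ref{propJprimegeq0}; the corollary is a matter of bookkeeping, with the only nontrivial check being that the pushout-product construction preserves finite presentability, which follows from the fact that finite colimits of finitely presented objects are finitely presented in any locally finitely presentable category.
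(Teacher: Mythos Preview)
Your proposal is correct and follows essentially the same approach as the paper: invoke Lemma~\ref{lemmaIgeq0} for the generating cofibrations, check that the morphisms of $J^\prime$ are trivial cofibrations between finitely presented objects, and use Proposition~\ref{propJprimegeq0} for the key lifting condition. The paper's proof adds one small bookkeeping remark you omit, namely that by Lemma~\ref{lemmacofibgen} there exists a set $J$ of generating trivial cofibrations whose domains and codomains are small (which is automatic since every object in the Grothendieck category $\Ch_{\geq 0}(\ShvA)$ is small); otherwise the two arguments are the same.
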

\begin{proof}
	By Lemma \ref{lemmacofibgen} $\Ch_{\geq 0}(\ShvA)$ is cofibrantly generated, so there exists a set $J$ of generating trivial cofibrations. Since every object in $\Ch_{\geq 0}(\ShvA)$ is small, the domains and codomains from $J$ are small.
	By Lemma \ref{lemmaIgeq0} $\Ch_{\geq 0}(\ShvA)$ has a set of generating cofibrations with finitely presented domains and codomains.
	All morphisms from $J^\prime$ are trivial cofibrations with finitely presented domains and codomains, so 
	Proposition \ref{propJprimegeq0} implies that $J^\prime$ is set of weakly generating trivial cofibrations for $\Ch_{\geq 0}(\ShvA)$.
\end{proof}

\begin{lem}
	The model category $\Ch_{\geq 0}(\ShvA)$ is cellular.
\end{lem}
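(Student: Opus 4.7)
The plan is to verify the conditions of Hirschhorn's definition of a cellular model category (\cite[Definition 12.1.1]{hirschhorn2003model}): on top of cofibrant generation, one needs $(a)$ compactness of the domains and codomains of a chosen set of generating cofibrations $I$, $(b)$ smallness of the domains of the generating trivial cofibrations relative to $I$, and $(c)$ that every cofibration is an effective monomorphism.

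Cofibrant generation is Lemma~\ref{lemmacofibgen}. Conditions $(a)$ and $(b)$ will follow from Lemma~\ref{lemmaIgeq0}: the generating cofibrations can be chosen with finitely presented domains and codomains, and $\Ch_{\geq 0}(\ShvA)$ is locally finitely presentable, so every object is small and finitely presented objects are in particular compact in the sense of Hirschhorn. Since every object is small, condition $(b)$ is automatic.

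The main step is $(c)$: showing that every cofibration $f : A \to B$ in $\Ch_{\geq 0}(\ShvA)$ is an effective monomorphism. My plan is to transfer this property along $\iota$ from $\Ch(\ShvA)$, which is cellular by \cite{bonart2022paper1}. By construction of the transferred model structure, $\iota(f)$ is a cofibration in $\Ch(\ShvA)$, hence an effective monomorphism. Now $\iota$ is fully faithful and is simultaneously a left adjoint to $\tau_{\text{good}}$ and a right adjoint to $\tau_{\text{naive}}$, so it preserves and reflects all limits and colimits. Consequently the cokernel pair of $f$ in $\Ch_{\geq 0}(\ShvA)$ is carried by $\iota$ to the cokernel pair of $\iota(f)$, and the equalizer diagram witnessing $\iota(f)$ as an effective monomorphism descends, via full faithfulness, to the analogous equalizer diagram in $\Ch_{\geq 0}(\ShvA)$. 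This shows $f$ is the equalizer of its own cokernel pair.

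I expect the effective monomorphism step to be the only delicate point. Should the explicit descent of the equalizer diagram prove awkward to spell out, a clean alternative is to observe that $\Ch_{\geq 0}(\ShvA)$ is an abelian category, so every monomorphism is automatically an effective monomorphism; it would then suffice to note that $\iota(f)$ is a cofibration hence a monomorphism in $\Ch(\ShvA)$ (by its cellularity), and that the limit-preserving faithful functor $\iota$ reflects monomorphisms.
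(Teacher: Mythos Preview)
Your proposal is correct. The paper's proof is precisely your ``clean alternative'': it observes that $\iota(f)$ is a cofibration in $\Ch(\ShvA)$, hence a monomorphism, so $f$ is a monomorphism in the abelian category $\Ch_{\geq 0}(\ShvA)$ and therefore automatically an effective monomorphism. Your primary route---transferring the effective-monomorphism property directly along $\iota$ using that $\iota$ is fully faithful and bi-adjoint---is also valid and would work in a non-abelian setting, but here it is more elaborate than necessary; the abelian shortcut is the natural choice. For the smallness and compactness conditions the paper simply invokes Corollary~\ref{corweakfingen} (weak finite generation), which packages the same content you extract from Lemma~\ref{lemmaIgeq0} and local presentability.
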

\begin{proof}
	Due to Corollary \ref{corweakfingen} we just need to show that cofibrations in $\Ch_{\geq 0}(\ShvA)$ are effective monomorphisms.
	If $f$ is a cofibration in $\Ch_{\geq 0}(\ShvA)$, then $\iota(f)$ is a cofibration in $\Ch(\ShvA)$. Then $f$ is a monomorphism in $\Ch(\ShvA)$
	and in $\Ch_{\geq 0}(\ShvA)$.  Since $\Ch_{\geq 0}(\ShvA)$ is an abelian category, every monomorphism is effective. Hence $f$ is an effective monomorphism.
\end{proof}

\begin{lem}
	The model category 
	$\Ch_{\geq 0}(\ShvA)$ is strongly left proper in the sense of \cite[Definition 4.6]{dundas2003enriched}
\end{lem}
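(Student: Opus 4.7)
The plan is to transfer strong left properness from $\Ch(\ShvA)$, which was established in \cite{bonart2022paper1}, along the inclusion $\iota : \Ch_{\geq 0}(\ShvA) \to \Ch(\ShvA)$. The essential observation is that $\iota$ admits both a left adjoint $\tau_{\text{naive}}$ and a right adjoint $\tau_{\text{good}}$, so it preserves all limits and colimits; moreover, by the proof of the monoidal model structure above, $\iota$ is strong monoidal. Combined with the defining property that a map $f$ is a cofibration (respectively a weak equivalence) in $\Ch_{\geq 0}(\ShvA)$ if and only if $\iota f$ is so in $\Ch(\ShvA)$, this means $\iota$ creates and reflects every piece of structure that appears in Definition 4.6 of \cite{dundas2003enriched}.

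Concretely, I would take an arbitrary test diagram for strong left properness in $\Ch_{\geq 0}(\ShvA)$ -- a cofibration $i : A \to B$ together with a weak equivalence $A \to A'$, possibly tensored with a further object $X$, depending on the precise clause of the definition -- and form the pushout $B \cup_A A'$. Applying $\iota$ yields the analogous diagram in $\Ch(\ShvA)$, since $\iota$ carries cofibrations to cofibrations, weak equivalences to weak equivalences, pushouts to pushouts, and tensor products to tensor products. Strong left properness of $\Ch(\ShvA)$, which holds by \cite{bonart2022paper1}, then gives the desired conclusion there, and it descends back to $\Ch_{\geq 0}(\ShvA)$ because $\iota$ reflects weak equivalences.

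The main obstacle I anticipate is purely bookkeeping: matching every hypothesis of Dundas--R\"ognvaldsson--\O stv\ae r's Definition 4.6 against the preservation properties of $\iota$. For instance, if cofibrancy of the source of the weak equivalence is required, one uses that $\iota$ is a left Quillen functor with respect to $\iota \dashv \tau_{\text{good}}$ and hence sends cofibrant objects to cofibrant objects; if smallness or finite presentability conditions appear, one appeals to Lemma~\ref{lemmaIgeq0}. Beyond this bookkeeping, no new technical input is expected, since all the heavy lifting is already packaged into the behaviour of $\iota$ on cofibrations, weak equivalences, colimits, and the monoidal product.
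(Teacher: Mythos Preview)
Your proposal is correct and matches the paper's proof essentially verbatim: the paper applies $\iota$ to the strong-left-properness pushout square (with $g\otimes Z$ a cofibration tensored with an object and $f$ a weak equivalence), uses that $\iota$ is strong monoidal, preserves pushouts, cofibrations and weak equivalences, invokes strong left properness of $\Ch(\ShvA)$ from \cite{bonart2022paper1}, and reflects the resulting weak equivalence back. The only refinement is that the paper knows the precise shape of Definition~4.6 and writes the diagram explicitly, whereas you hedge on its form; once you look it up, your outline becomes exactly the paper's argument.
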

\begin{proof}
	If we have a pushout square
	$$\xymatrix{A \otimes Z \ar[r]^f \ar[d]_{g \otimes Z} & B \ar[d] \\
	C \otimes Z \ar[r]_h & D}$$
in $\Ch_{\geq 0}(\ShvA)$ with $f$ a weak equivalence and $g : A \rightarrow C$ a cofibration,
then the square
$$\xymatrix{\iota(A) \otimes \iota(Z) \ar[r]^\sim \ar[d]_{\iota(g) \otimes \iota(Z)} & \iota(A \otimes Z) \ar[r]^{\iota(f)} \ar[d]_{\iota(g \otimes Z)} & \iota(B) \ar[d] \\
\iota(C) \otimes \iota(Z) \ar[r]^\sim & \iota(C \otimes Z) \ar[r]_{\iota(h)} & \iota(D)  }$$
is a pushout square in $\Ch(\ShvA)$. Since $\iota(f)$ is a weak equivalence, $\iota(g)$ is a cofibration, and $\Ch(\ShvA)$ is strongly left proper
by~\cite{bonart2022paper1}, it follows that $\iota(h)$ is a weak equivalence in $\Ch(\ShvA)$. So $h$ is a weak equivalence in $\Ch_{\geq 0}(\ShvA)$.
\end{proof}

In summary, we have a model category $\Ch_{\geq 0}(\ShvA)$ that is cellular, weakly finitely generated and where the weak equivalences are the local quasi-isomorphisms. With respect to the usual tensor product of chain complexes $\otimes$ it is monoidal, strongly left proper and satisfies the monoid axiom.

We can transfer this model structure along the Dold-Kan correspondence
$$DK : \Ch_{\geq 0}(\ShvA) \overset{\sim}{\leftrightarrow} \Delta^{op}(\ShvA) : DK^{-1}.$$
So we define a model structure on $\Delta^{op}(\ShvA)$, where a morphism $f$ is a weak equivalence (respectively fibration, cofibration), if and only if $DK^{-1}(f)$ is a weak equivalence (respectively fibration, cofibration) in $\Ch_{\geq 0}(\ShvA)$.
Then weak equivalences in $\Delta^{op}\ShvA$ are the stalkwise weak equivalences of simplicial sets. Furthermore $\Delta^{op}\ShvA$ is weakly finitely generated and cellular.
From now on, weak equivalences in $\Delta^{op}\ShvA$ be called local equivalences, fibrations in $\Delta^{op}\ShvA$ will be called local fibrations, and fibrant objects in $\Delta^{op}\ShvA$ will be called locally fibrant objects.

Let $\otimes$ be the degreewise tensor product of $\Delta^{op}\ShvA$.
We want to show that $\Delta^{op}\ShvA$ is monoidal, strongly left proper and satisfies the monoid axiom with respect to $\otimes$.

The Dold-Kan correspondence is unfortunately not strongly monoidal with respect to the degreewise tensor product $\otimes$ on $\Delta^{op}\ShvA$ and the usual tensor product of chain complexes on $\Ch_{\geq 0}(\ShvA)$.
We define on $\Ch_{\geq 0}(\ShvA)$ the \textit{Dold-Kan twisted tensor product} $\otimesDK$ by
$$A \otimesDK B := DK^{-1}(DK(A) \otimes DK(B)). $$
Then the Dold-Kan correspondences is strongly monoidal with respect to the degreewise tensor product $\otimes$ on $\Delta^{op}\ShvA$ and the Dold-Kan twisted tensor product $\otimesDK$ on $\Ch_{\geq 0}(\ShvA)$.
So to show that $\Delta^{op}\ShvA$ is monoidal, strongly left proper and satisfies the monoid axiom with respect to $\otimes$, we now just need to show that $\Ch_{\geq 0}(\ShvA)$ is monoidal, strongly left proper and satisfies the monoid axiom with respect to $\otimesDK$.

\begin{lem}\label{lemfZmono}
	Let $f$ be a cofibration and $Z$ an object in $\Ch_{\geq 0}(\ShvA)$. Then $f \otimesDK Z$ is a monomorphism.
\end{lem}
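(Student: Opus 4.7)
The plan is to reduce the statement, via the Dold-Kan correspondence, to a pointwise computation on generating cofibrations. Since $DK$ and $DK^{-1}$ are exact equivalences of abelian categories, they preserve and reflect monomorphisms, and by the very definition $f \otimesDK Z = DK^{-1}(DK(f) \otimes DK(Z))$ it is enough to show that $DK(f) \otimes DK(Z)$ is a monomorphism in $\Delta^{op}\ShvA$. Monomorphisms in this Grothendieck abelian category are detected degreewise, so the task becomes showing $DK(f)_m \otimes DK(Z)_m$ is a monomorphism in $\ShvA$ for each $m \geq 0$.

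Next I would reduce to the case where $f$ is a generating cofibration. The functor $-\otimesDK Z$ is a left adjoint, its right adjoint being the Dold-Kan-twisted internal hom $DK^{-1}\circ \underline{\mathrm{Hom}}(DK(Z),-)\circ DK$, so it preserves colimits and retracts. Monomorphisms in the Grothendieck abelian category $\Ch_{\geq 0}(\ShvA)$ are stable under pushouts, transfinite compositions, and retracts. Hence the class of morphisms $g$ satisfying the property ``$g \otimesDK Z$ is a monomorphism'' is closed under the cofibrant generation procedure, and Lemma~\ref{lemmaIgeq0} reduces the claim to the generating cofibrations listed there.

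The case $0 \to \Cor(-,X)_{\nis} \otimes S^0\bb Z$ is trivial. For $f : \Cor(-,X)_{\nis} \otimes S^n\bb Z \to \Cor(-,X)_{\nis} \otimes D^n\bb Z$ with $n \geq 0$, I would apply the standard Dold-Kan direct-sum decomposition
\[ DK(C)_m \;\cong\; \bigoplus_{\phi : [m] \twoheadrightarrow [k]} C_k. \]
The domain of $f$ is concentrated in chain degree $n$ and its codomain in chain degrees $n$ and $n+1$, so in each simplicial degree $m$ the map $DK(f)_m$ is identified with the inclusion of $\bigoplus_{\phi : [m] \twoheadrightarrow [n]} \Cor(-,X)_{\nis}$ as a direct summand of $\bigoplus_{\phi : [m] \twoheadrightarrow [n]} \Cor(-,X)_{\nis} \oplus \bigoplus_{\psi : [m] \twoheadrightarrow [n+1]} \Cor(-,X)_{\nis}$. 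Thus $DK(f)_m$ is a split monomorphism in $\ShvA$, and split monomorphisms are preserved by any additive functor, in particular by $-\otimes DK(Z)_m$, so $DK(f)_m \otimes DK(Z)_m$ is a split monomorphism and hence a monomorphism. The only mild obstacle is correctly reading off this decomposition and tracking the splitness through the tensor product; once that is set up, the closure argument handles the rest.
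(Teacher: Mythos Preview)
Your argument is correct. The paper's proof is more direct: it uses without reduction the structural fact that every cofibration $f$ in $\Ch_{\geq 0}(\ShvA)$ is already a degreewise split monomorphism, then applies the Dold--Kan direct-sum formula to conclude that $DK(f)$ is degreewise a direct sum of split monomorphisms, hence degreewise split, and tensoring with $DK(Z)$ preserves this. Your route instead bypasses the ``cofibrations are degreewise split monos'' step by reducing to the generating cofibrations via the closure argument (monomorphisms in a Grothendieck category are stable under pushouts, transfinite compositions and retracts, and $-\otimesDK Z$ is cocontinuous), and then checks the split-inclusion property only on generators. Both arrive at the same key degreewise computation; the paper's version is shorter if one accepts the degreewise-split characterisation of cofibrations, while yours is more self-contained and avoids invoking that characterisation.
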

\begin{proof}
	If $f : A \rightarrow B$ is a cofibration in $\Ch_{\geq 0}(\ShvA)$ then $f$ is a degreewise split monomorphism.
	The functor $DK: \Ch_{\geq 0}(\ShvA) \rightarrow \Delta^{op}\ShvA$ can be explicitly computed in degree $n \geq 0$ by
	$$DK(X)_n = \underset{\underset{\text{surjective}}{[n] \rightarrow [k]} }{\bigoplus} X_k .$$
	So $DK(f)$ is computed as the morphism
	$$DK(f)_n = \underset{\underset{\text{surjective}}{[n] \rightarrow [k]} }{\bigoplus} f_k : \underset{\underset{\text{surjective}}{[n] \rightarrow [k]} }{\bigoplus} A_k \rightarrow \underset{\underset{\text{surjective}}{[n] \rightarrow [k]} }{\bigoplus} B_k .$$
	This is a direct sum of split monomorphisms.
	So $DK(f)$ is a degreewise split monomorphism in $\Delta^{op}\ShvA$.
	Hence, if $Z$ is an object in $\Ch_{\geq 0}(\ShvA)$, then the degreewise tensor product
	$$DK(f) \otimes DK(Z) $$
	is again a split monomorphism in $\Delta^{op}\ShvA$.
	Since $DK^{-1}$ preserves monomorphisms, this then implies that
	$$f \otimesDK Z = DK^{-1}(DK(f) \otimes DK(Z))$$
	is a monomorphism in $\Ch_{\geq 0}(\ShvA)$.
\end{proof}

\begin{lem}
	$\Ch_{\geq 0}(\ShvA)$ satisfies the monoid axiom with respect to $\otimesDK$.
	So $\Delta^{op}\ShvA$ satisfies the monoid axiom with respect to $\otimes$.
\end{lem}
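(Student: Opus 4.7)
The plan is to derive the monoid axiom for $\otimesDK$ from the already-established monoid axiom for $\otimes$ by comparing the two tensor products via the Eilenberg--Zilber shuffle. In any abelian category the classical shuffle is a natural chain homotopy equivalence $N(X) \otimes N(Y) \to N(X \otimes Y)$ for simplicial objects $X,Y$; translated through the Dold--Kan correspondence, it yields a natural transformation of bifunctors
$$\mathrm{sh}_{A,B} \colon A \otimes B \longrightarrow A \otimesDK B$$
on $\Ch_{\geq 0}(\ShvA)$ which is pointwise a chain homotopy equivalence, hence a local quasi-isomorphism.

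The first step is to show that for any trivial cofibration $f \colon A \to B$ and any object $Z$, the morphism $f \otimesDK Z$ is both a monomorphism and a local weak equivalence. Monomorphicity is Lemma~\ref{lemfZmono}. For the weak equivalence, naturality of $\mathrm{sh}$ gives the commutative square
$$\xymatrix{ A \otimes Z \ar[r]^{\mathrm{sh}_{A,Z}} \ar[d]_{f \otimes Z} & A \otimesDK Z \ar[d]^{f \otimesDK Z} \\ B \otimes Z \ar[r]^{\mathrm{sh}_{B,Z}} & B \otimesDK Z }$$
in which both horizontal maps are weak equivalences, and $f \otimes Z$ is itself a weak equivalence because it lies in $CW \otimes \Ch_{\geq 0}(\ShvA) \subseteq W$ by the already-proven monoid axiom for $\otimes$; two-out-of-three then forces $f \otimesDK Z$ to be a weak equivalence.

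The second step is to observe that in the Grothendieck abelian category $\Ch_{\geq 0}(\ShvA)$, the class of morphisms which are simultaneously monomorphisms and local quasi-isomorphisms is closed under pushouts along arbitrary maps and under transfinite composition. Pushouts of monomorphisms are monomorphisms in any abelian category, and they preserve cokernels, so the pushout cokernel remains locally acyclic; the long exact sequence of local homology then forces the pushout itself to be a local quasi-isomorphism. Transfinite compositions are filtered colimits, and since $\ShvA$ is a Grothendieck category (AB5), filtered colimits preserve both monomorphisms and local quasi-isomorphisms. Combining the two steps gives the containment
$$(CW \otimesDK \Ch_{\geq 0}(\ShvA))-\cof \;\subseteq\; \{\textrm{monomorphisms}\} \cap W \;\subseteq\; W,$$
which is the monoid axiom for $(\Ch_{\geq 0}(\ShvA), \otimesDK)$. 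Since $DK$ is a strong monoidal equivalence from $(\Ch_{\geq 0}(\ShvA), \otimesDK)$ to $(\Delta^{op}\ShvA, \otimes)$, this transfers at once to the degreewise tensor on $\Delta^{op}\ShvA$. The main delicacy is that the Dold--Kan correspondence is only strong monoidal after introducing the twisted product $\otimesDK$, so one cannot deduce the new monoid axiom by a formal monoidal-model-categorical transport from the old one; instead the argument genuinely needs both the Eilenberg--Zilber comparison and the abelian-categorical closure properties specific to the Grothendieck setting.
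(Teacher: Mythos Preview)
Your proof is correct and follows essentially the same approach as the paper: both reduce the monoid axiom to showing that $f \otimesDK Z$ is an injective local quasi-isomorphism for $f$ a trivial cofibration, using Lemma~\ref{lemfZmono} for injectivity, the Eilenberg--Zilber comparison together with the already-established monoid axiom for $\otimes$ for the weak-equivalence part, and the Grothendieck-category closure of injective quasi-isomorphisms under pushouts and transfinite composition to conclude.
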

\begin{proof}
		Since $\ShvA$ is a Grothendieck category, we know that injective quasi-isomorphisms in $\Ch_{\geq 0}(\ShvA)$ are stable under pushouts and transfinite compositions. So to prove the monoid axiom we just need to show that for every trivial cofibration $f : A \rightarrow B$ in $\Ch_{\geq 0}(\ShvA)$ the morphism  $f \otimesDK Z$ is an injective quasi-isomorphism.
		By Lemma \ref{lemfZmono} we know that it is injective.
		So we just need to show that it is a weak equivalence.
		
		By \cite{nlab:eilenberg-zilber/alexander-whitney_deformation_retraction} we have for all $X, Y \in \Ch_{\geq 0}(\ShvA)$ a natural chain homotopy equivalence
		$$\nabla : X \otimes Y \rightarrow X \otimesDK Y$$
		between the usual tensor product of chain complexes and the Dold-Kan twisted tensor product.
		We then get a commutative diagram
		$$\xymatrix{ A \otimesDK Z \ar[r]^{f \otimesDK Z} & B \otimesDK Z\\
		             A \otimes Z \ar[r]^{f \otimes Z} \ar[u]^(0.45){\nabla} & B \otimes Z \ar[u]_(0.45){\nabla} } $$
	    where vertical maps are chain homotopy equivalences, and the lower horizontal map is a weak equivalence because $\Ch_{\geq 0}(\ShvA)$ satisfies the monoid axiom with respect to $\otimes$.
	    It follows that the upper horizontal map is a weak equivalence. So $\Ch_{\geq 0}(\ShvA)$ satisfies the monoid axiom with respect to $\otimesDK$.
\end{proof}

\begin{lem}
	$\Ch_{\geq 0}(\ShvA)$ is strongly left proper with respect to $\otimesDK$.
	So $\Delta^{op}\ShvA$ is strongly left proper with respect to $\otimes$.
\end{lem}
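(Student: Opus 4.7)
The plan is to reduce strong left properness with respect to $\otimesDK$ to the already-established strong left properness with respect to $\otimes$ via the natural chain homotopy equivalence $\nabla$. Suppose we are given a pushout
$$\xymatrix{A \otimesDK Z \ar[r]^f \ar[d]_{g \otimesDK Z} & B \ar[d]^k \\
C \otimesDK Z \ar[r]_h & D}$$
with $g$ a cofibration and $f$ a weak equivalence. I would begin by using the naturality of $\nabla$ to build the commutative diagram
$$\xymatrix{A \otimes Z \ar[r]^{\nabla_A} \ar[d]_{g \otimes Z} & A \otimesDK Z \ar[r]^f \ar[d]_{g \otimesDK Z} & B \ar[d]^k \\
C \otimes Z \ar[r]_{\nabla_C} & C \otimesDK Z \ar[r]_h & D}$$
where the left square commutes by naturality of $\nabla$ and the right square is the given pushout.

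Next I would form the pushout $\tilde{D}$ of the outer rectangle, that is, of $C \otimes Z \xleftarrow{g \otimes Z} A \otimes Z \xrightarrow{f\circ\nabla_A} B$. The universal property produces a canonical comparison map $\phi : \tilde{D} \to D$ satisfying $\phi \circ \tilde{h} = h \circ \nabla_C$, where $\tilde{h} : C \otimes Z \to \tilde{D}$ is the pushout leg. Since $f \circ \nabla_A$ is a weak equivalence and $g$ is a cofibration, the previous lemma (strong left properness of $\Ch_{\geq 0}(\ShvA)$ with respect to $\otimes$) gives that $\tilde{h}$ is a weak equivalence. Once $\phi$ is shown to be a weak equivalence, two successive applications of two-out-of-three, first for the composite $h \circ \nabla_C = \phi \circ \tilde{h}$ and then to cancel the weak equivalence $\nabla_C$, will yield that $h$ is a weak equivalence, as required.

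The remaining work is to prove that $\phi$ is a weak equivalence. The key input is that both $g \otimes Z$ and $g \otimesDK Z$ are monomorphisms: the latter is Lemma~\ref{lemfZmono}, and the former follows by an essentially identical argument, using that a cofibration in $\Ch_{\geq 0}(\ShvA)$ is degreewise a split monomorphism on generating cofibrations, a property stable under the pushouts and transfinite compositions used to build general cofibrations in the Grothendieck category $\ShvA$. Consequently $\tilde{D}$ and $D$ appear as cokernels in short exact sequences
$$0 \to A \otimes Z \to B \oplus (C \otimes Z) \to \tilde{D} \to 0,$$
$$0 \to A \otimesDK Z \to B \oplus (C \otimesDK Z) \to D \to 0,$$
connected by a morphism whose left and middle vertical arrows, namely $\nabla_A$ and $\mathrm{id}_B \oplus \nabla_C$, are local quasi-isomorphisms. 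Applying the five-lemma to the induced long exact sequences of homology sheaves then gives that $\phi$ is a weak equivalence. The step I expect to be most subtle is the verification that $g \otimes Z$ is a monomorphism, since the formation of the tensor product in $\ShvA$ proceeds via Day convolution and sheafification, and one must carefully track the monomorphism property through these operations and through the cell-by-cell construction of cofibrations.
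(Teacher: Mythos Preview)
Your argument is correct, but it takes a detour that the paper avoids entirely. You already invoke Lemma~\ref{lemfZmono} to know that $g \otimesDK Z$ is a monomorphism; once you have that, the paper simply observes that in the Grothendieck category $\Ch_{\geq 0}(\ShvA)$ quasi-isomorphisms are stable under pushout along monomorphisms, and concludes immediately that $h$ is a weak equivalence. There is no need to bring in $\nabla$, the auxiliary pushout $\tilde{D}$, strong left properness for $\otimes$, or the five-lemma comparison of short exact sequences.

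In effect, your proof unwinds the statement ``weak equivalences are stable under pushout along monomorphisms'' by hand: you write $D$ as a cokernel, compare it to another cokernel $\tilde{D}$ via the five lemma, and then use strong left properness for $\otimes$ to identify $\tilde{D}$. Each step is fine, and the only point you flagged as subtle (that $g \otimes Z$ is a monomorphism) is in fact easy, since cofibrations are degreewise split monomorphisms and the ordinary tensor product of chain complexes is computed degreewise as a finite direct sum. But the whole apparatus is unnecessary: the paper's two-line proof uses the same key input (Lemma~\ref{lemfZmono}) and gets to the conclusion directly.
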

\begin{proof}
	Since $\ShvA$ is a Grothendieck category, quasi-isomorphisms in $\Ch_{\geq 0}(\ShvA)$ are stable under pushouts along monorphisms. For any cofibration $f$ the map $f \otimesDK Z$ is a monomorphism by Lemma \ref{lemfZmono}.  So $\Ch_{\geq 0}(\ShvA)$ is strongly left proper with respect to $\otimesDK$.
\end{proof}

\begin{lem}
	$\Ch_{\geq 0}(\ShvA)$ is a monoidal model category with respect to $\otimesDK$.
	So $\Delta^{op}\ShvA$ is a monoidal model category with respect to $\otimes$.
\end{lem}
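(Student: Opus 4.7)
The plan is to verify the pushout-product and unit axioms for $\otimesDK$ on $\Ch_{\geq 0}(\ShvA)$; because $DK$ is a strong monoidal equivalence between $(\Ch_{\geq 0}(\ShvA), \otimesDK)$ and $(\Delta^{op}\ShvA, \otimes)$ by construction of $\otimesDK$, this will automatically give the monoidal model category structure on $\Delta^{op}\ShvA$ with respect to $\otimes$. For the unit axiom, the monoidal unit $\mathbbm{1}_{\geq 0}$ is $\Cor(-,pt)_{\nis}$ concentrated in degree $0$, and $\iota(\mathbbm{1}_{\geq 0}) = \mathbbm{1}$ is cofibrant in $\Ch(\ShvA)$, so $\mathbbm{1}_{\geq 0}$ is cofibrant in $\Ch_{\geq 0}(\ShvA)$ and the unit axiom holds trivially.

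For the trivial part of the pushout-product axiom, suppose $f : A \to B$ is a trivial cofibration and $g : C \to D$ is a cofibration. Form the pushout
$$\xymatrix{A \otimesDK C \ar[r]^{f \otimesDK C} \ar[d]_{A \otimesDK g} & B \otimesDK C \ar[d] \\ A \otimesDK D \ar[r] & P.}$$
The top map $f \otimesDK C$ is a weak equivalence by the monoid axiom already established for $\otimesDK$, while the left map has the form cofibration-tensor-object, so the strong left properness of $(\Ch_{\geq 0}(\ShvA), \otimesDK)$ already proved above guarantees the induced map $A \otimesDK D \to P$ is a weak equivalence. Composing with the remaining leg $P \to B \otimesDK D$ yields $f \otimesDK D$, which is again a weak equivalence by the monoid axiom, so by the two-out-of-three property the map $P \to B \otimesDK D$, namely $f \square_{DK} g$, is a weak equivalence.

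For the cofibration part, I would reduce to a check on the generating cofibrations from Lemma~\ref{lemmaIgeq0}. Every generator has the form $\Cor(-,X)_{\nis} \otimes \phi$ where $\phi$ lies in the standard generating set of Quillen's projective model structure on $\Ch_{\geq 0}(\Ab)$. Since $\otimesDK$ is bilinear and the representable sheaves live in degree zero, the pushout-product of two such generators decomposes as
$$(\Cor(-,X)_{\nis} \otimes \phi) \square_{DK} (\Cor(-,Y)_{\nis} \otimes \psi) \;\cong\; (\Cor(-,X)_{\nis} \otimes \Cor(-,Y)_{\nis}) \otimes (\phi \square_{DK} \psi),$$
so the problem reduces to knowing that $\phi \square_{DK} \psi$ is a cofibration in $\Ch_{\geq 0}(\Ab)$ (which is the classical statement that $(\Delta^{op}\Ab, \otimes)$, equivalently $(\Ch_{\geq 0}(\Ab), \otimesDK)$, is a monoidal model category), and then that tensoring with a representable preserves generating cofibrations, which is immediate. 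The standard cellular argument then extends the axiom from generators to all cofibrations. The main obstacle is the cofibration half of the pushout-product axiom: the fact that $\otimesDK$ is not the naive tensor product means neither our earlier proof that $(\Ch_{\geq 0}(\ShvA), \otimes)$ is monoidal nor a direct Eilenberg--Zilber comparison yields cofibration-preservation for $\otimesDK$ automatically, so one really does have to go through generating cofibrations and exploit the bilinear decomposition above.
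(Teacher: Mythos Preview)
Your proof is correct and follows essentially the same strategy as the paper: the unit axiom via cofibrancy of the monoidal unit, the trivial part of the pushout-product axiom via the monoid axiom and two-out-of-three, and the cofibration part via reduction to generating cofibrations and ultimately to the fact that $(\Delta^{op}\Ab,\otimes)$, equivalently $(\Ch_{\geq 0}(\Ab),\otimesDK)$, is a monoidal model category.

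The one execution difference worth noting is in the cofibration step. The paper passes through presheaves: it observes that $\otimesDK$ on $\Ch_{\geq 0}(\PshA)$ is the Day convolution induced from $\otimesDK$ on $\Ch_{\geq 0}(\Ab)$, invokes \cite[Theorem~5.5]{garkusha2019derived} to conclude $\Ch_{\geq 0}(\PshA)$ is monoidal, and then uses that the generating cofibrations of $\Ch_{\geq 0}(\ShvA)$ are sheafifications of presheaf cofibrations, so $f\,\square\, g \cong L_{\nis}(f'\,\square\, g')$ is again a cofibration. Your route is more direct: you use the explicit bilinear decomposition $(\Cor(-,X)_{\nis}\otimes\phi)\,\square_{DK}\,(\Cor(-,Y)_{\nis}\otimes\psi)\cong \Cor(-,X\times Y)_{\nis}\otimes(\phi\,\square_{DK}\,\psi)$, which is valid because tensoring with a degree-zero object commutes with $DK$ and $DK^{-1}$ (both sides are computed by natural idempotents), and because $\Cor(-,X)_{\nis}\otimes_{\ShvA}\Cor(-,Y)_{\nis}\cong\Cor(-,X\times Y)_{\nis}$. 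Your version avoids the external reference and is slightly more elementary; the paper's version is more structural and generalises more readily. Also, in the trivial part the paper uses the monoid axiom alone (a pushout of $A\otimesDK g$ with $g$ a trivial cofibration is a weak equivalence directly), whereas you invoke strong left properness---both work, but the monoid axiom already suffices.
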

\begin{proof}
	The unit for $\otimesDK$ is the chain complex $\bb Z$ concentrated in degree $0$. That is a cofibrant object, so $\Ch_{\geq 0}(\ShvA)$ satisfies the unit axiom.
	Let us now show the pushout-product axiom.
	The category of simplicial abelian groups $\Delta^{op}\Ab$ is monoidal and satisfies the monoid axiom with respect to the degreewise tensor product of chain complexes $\otimes$.
	If we define a Dold-Kan twisted tensor product $\otimesDK$ on chain complexes of abelian groups $\Ch_{\geq 0}(\Ab)$ by
	$$X \otimesDK Y = DK^{-1}(DK(X) \otimes DK(Y)) $$
	then $\Ch_{\geq 0}(\Ab)$ with the standard projective model structure and tensor product $\otimesDK$ is a monoidal model category satisfying the monoid axiom.
	Similarly, we can also define a Dold-Kan twisted tensor product $\otimesDK$ on chain complexes of presheaves $\Ch_{\geq 0}(\PshA)$, and it coincides with the Day convolution product induced by the Dold-Kan twisted tensor product on $\Ch_{\geq 0}(\Ab)$ and the monoidal structure of $\Cor$.
	By \cite[Theorem 5.5]{garkusha2019derived} it follows that $\Ch_{\geq 0}(\PshA)$ with standard projective model structure and the Dold-Kan twisted tensor product $\otimesDK$ is a monoidal model category.
	For $\Ch_{\geq 0}(\ShvA)$ the set $$\{\Cor(-,X)_{\nis} \otimes S^n \bb Z \rightarrow \Cor(-,X)_{\nis} \otimes D^n \bb Z \mid X \in \Smk, n \geq 0 \} \cup \{ 0 \rightarrow \Cor(-,X)_{\nis} \otimes S^0\bb Z \mid X \in \Smk \}$$ is a set of generating cofibrations. All these generating cofibrations are sheafifications of cofibrations from $\Ch_{\geq 0}(\PshA)$.
	So if $f$ and $g$ are generating cofibrations in $\Ch_{\geq 0}(\PshA)$, and $f \square g$ is the pushout-product with respect to $\otimesDK$, then we can find cofibrations $f^\prime$ and $g^\prime$ in $\Ch_{\geq 0}(\PshA)$ such that $f = L_{\nis}(f^\prime)$ and $g = L_{\nis}(g^\prime)$.
	Then $f \square g \cong L_{\nis}(f^\prime \square g^\prime)$,
	where the pushout-product $f^\prime \square g^\prime$ in $\Ch_{\geq 0}(\PshA)$ is taken with respect to $\otimesDK$. Since $\Ch_{\geq 0}(\PshA)$ is a monoidal model category with respect to $\otimesDK$ it follows that $f^\prime \square g^\prime$ is a cofibration in $\Ch_{\geq 0}(\PshA)$, and therefore $f \square g$ is a cofibration in $\Ch_{\geq 0}(\ShvA)$.
	All we need to show now is that a pushout-product of a cofibration with a trivial cofibration is a weak equivalence in $\Ch_{\geq 0}(\ShvA)$.
	So let $f : A \rightarrow B$ be a cofibration and $g : C \rightarrow D$ be a trivial cofibration in $\Ch_{\geq 0}(\ShvA)$. We need to show that the pushout-product $f \square g$ with respect to $\otimesDK$ is a weak equivalence in $\Ch_{\geq 0}(\ShvA)$.
	Consider the diagram
	$$\xymatrix{ A \otimesDK C \ar[d] \ar[r]^{A \otimesDK g} & A \otimesDK D \ar[d] \ar@/^2.0pc/[ddr] \\
		B \otimesDK C \ar[r]^(0.4)h \ar@/_1.5pc/[rrd]_{B \otimesDK g} & A \otimesDK D \underset{A \otimesDK C}{\coprod} B \otimesDK C \ar[dr]^(0.6){f \square g} \\
		& & B \otimesDK D } $$
	The morphism $h$ is a base change of $A \otimesDK g$. Since $g$ is a trivial cofibration and $\Ch_{\geq 0}(\ShvA)$ satisfies the monoid axiom with respect to $\otimesDK$, this means that $h$ is a weak equivalence in $\Ch_{\geq 0}(\ShvA)$.
	Similarly $B \otimesDK g$ is a weak equivalence in $\Ch_{\geq 0}(\ShvA)$. So by 2-of-3 it follows that $f \square g$ is a weak equivalence in  $\Ch_{\geq 0}(\ShvA)$. So $\Ch_{\geq 0}(\ShvA)$ is a monoidal model category.
\end{proof}

We document the above lemmas as follows.

\begin{prop}\label{propmodelcategory}
	The model category $\Delta^{op}\ShvA$ with the usual degreewise tensor product is cellular, weakly finitely generated, monoidal, strongly left proper and satisfies the monoid axiom.
\end{prop}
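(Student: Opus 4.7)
The plan is to assemble the statement directly from the preceding lemmas by transporting everything across the Dold--Kan correspondence
$$DK : \Ch_{\geq 0}(\ShvA) \xrightarrow{\;\sim\;} \Delta^{op}\ShvA.$$
Recall that by construction the model structure on $\Delta^{op}\ShvA$ is defined so that $DK^{-1}$ preserves and reflects cofibrations, fibrations and weak equivalences; in particular $DK$ is a Quillen equivalence. Moreover, $DK$ has been set up to be strong monoidal with respect to the degreewise tensor product $\otimes$ on $\Delta^{op}\ShvA$ and the Dold--Kan twisted tensor product $\otimesDK$ on $\Ch_{\geq 0}(\ShvA)$. Every property in the conclusion is preserved under such a strong monoidal Quillen equivalence (for the non-monoidal properties one only needs that cofibrations, fibrations and weak equivalences are transferred).

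First I would address the purely model-categorical properties. Cellularity transfers because $DK^{-1}$ reflects cofibrations, so the lemma asserting that $\Ch_{\geq 0}(\ShvA)$ is cellular immediately gives cellularity of $\Delta^{op}\ShvA$; similarly, weak finite generation follows from Corollary~\ref{corweakfingen} by taking the image under $DK$ of the generating set $I$ of cofibrations and the weakly generating set $J'$ of trivial cofibrations, noting that $DK$ preserves finitely presented objects (being a left adjoint with finitely presentable target components), and that fibrant objects and the right-lifting characterisation are transported verbatim.

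Next I would invoke the monoidal-type lemmas. Since $DK$ identifies $(\Delta^{op}\ShvA,\otimes)$ with $(\Ch_{\geq 0}(\ShvA),\otimesDK)$ as symmetric monoidal categories, the pushout-product axiom, the unit axiom, the monoid axiom and strong left properness for $(\Delta^{op}\ShvA,\otimes)$ are literal translations of the same statements for $(\Ch_{\geq 0}(\ShvA),\otimesDK)$, which were established in the preceding lemmas. Each of these translations is formal: a pushout-product diagram, a monoid-axiom-type transfinite composite of pushouts of $(\text{trivial cofibration})\otimes Z$, or a pushout along a cofibration of the form $(\text{cofibration})\otimes Z$, in one category is sent by $DK$ to a diagram of the same shape in the other, with the analogous morphisms.

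There is no genuine obstacle here: the real work has already been carried out in establishing the corresponding facts for $\Ch_{\geq 0}(\ShvA)$ with $\otimesDK$, and in verifying that the Dold--Kan correspondence is strong monoidal for this choice of tensor product. The only bookkeeping step worth spelling out is the identification of the transferred model structure on $\Delta^{op}\ShvA$, so that weak equivalences really are the stalkwise weak equivalences of simplicial sets; this was already noted immediately after the transfer along $DK$. Consequently the proposition follows by simply collecting the preceding lemmas and corollaries.
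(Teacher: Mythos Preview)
Your proposal is correct and matches the paper's approach exactly: the proposition is stated in the paper with the preface ``We document the above lemmas as follows'' and has no separate proof, since it is precisely the summary you describe—the monoidal properties are established for $(\Ch_{\geq 0}(\ShvA),\otimesDK)$ in the preceding lemmas and then transported verbatim across the strong monoidal equivalence $DK$, while cellularity and weak finite generation come from the corresponding results for $\Ch_{\geq 0}(\ShvA)$ via the fact that $DK$ is an isomorphism of categories identifying the two model structures.
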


From now on, weak equivalences in $\Delta^{op}\ShvA$ be called local equivalences, fibrations in $\Delta^{op}\ShvA$ will be called local fibrations, and fibrant objects in $\Delta^{op}\ShvA$ will be called locally fibrant objects.

\section{Relation to $\Gamma$-spaces} \label{relatedtogamma}

For every natural number $n \geq 0$ let $n_+$ be the pointed set $\{0, \dots, n\}$ where $0$ is the basepoint. We write $\Gamma^{op}$ for the full subcategory of the category of pointed sets on the objects $n_+$. $\Gamma^{op}$ is equivalent to the category of finite pointed sets. We write $\Gamma$ for the opposite category of $\Gamma^{op}$. This category is equivalent to the category called $\Gamma$ in Segal's original paper \cite{S}.

In the additive context we do not need the category $\Gamma$ as a variable in contrast to framed motivic $\Gamma$-spaces in the sense of \cite{garkusha2019framed}. This section is to justify this fact (see Proposition \ref{generalgammaequiv}).
We also associate framed motivic $\Gamma$-spaces to enriched motivic $\Cor$-spaces (see Proposition \ref{framedgamma}).

Let $\mathcal{B}$ be an additive model category.
By $\GammaSpc(\mathcal{B})$ we denote the full subcategory of the functor category $\mathrm{Fun}(\Gamma^{op},\mathcal{B})$ consisting of those functors $\mathcal{X} : \Gamma^{op} \rightarrow B$ such that for every $n \in \bb N$ the canonical map $\mathcal{X}(n_+) \rightarrow \underset{i=1}{\overset{n}{\prod}} \mathcal{X}(1_+) $ is a weak equivalence in $\mathcal{B}$. This category is called the {\it category of special $\Gamma$-spaces in $\mathcal{B}$}.

We have a functor $\EM : \mathcal{B} \rightarrow \GammaSpc(\mathcal{B})$ given by the Eilenberg Maclane construction
$\EM(A)(n_+) := \underset{i=1}{\overset{n}{\bigoplus}} A.$
For a function $f : m_+ \rightarrow n_+$ between pointed finite sets, we define $$\EM(A)(f): \underset{j=1}{\overset{m}{\bigoplus}} A \rightarrow \underset{i=1}{\overset{n}{\bigoplus}} A$$
as follows. For $0 \leq i \leq n$ the $i$-th component $\EM(A)(f)_i : \underset{j=1}{\overset{m}{\bigoplus}} A \rightarrow A$ is $\EM(A)(f)_i := \sum_{j \in f^{-1}(\{i\})}\pi_j$, where $\pi_j : \underset{i=1}{\overset{m}{\bigoplus}} A \rightarrow A$ is the $j$-th projection morphism.

We have another functor $ev_1 : \GammaSpc(\mathcal{B}) \rightarrow \mathcal{B}$ given by $ev_1(\mathcal{X}) := \mathcal{X}(1_+). $

\begin{lem}
	The functor $ev_1 : \GammaSpc(\mathcal{B}) \rightarrow \mathcal{B}$ is left adjoint to $\EM : \mathcal{B} \rightarrow \GammaSpc(\mathcal{B})$.
\end{lem}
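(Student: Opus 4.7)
The plan is to exhibit a natural bijection
\[
\Phi : \Hom_{\mathcal B}(\mathcal X(1_+), A) \xrightarrow{\ \sim\ } \Hom_{\GammaSpc(\mathcal B)}(\mathcal X, \EM(A))
\]
inverse to $ev_1$. For notation, write $\rho^n_i : n_+ \to 1_+$ (for $n \geq 1$ and $i \in \{1,\dots,n\}$) for the pointed map sending $i$ to $1$ and every other non-basepoint element to $0$; directly from the definition of $\EM$, one has $\EM(A)(\rho^n_i) = \pi_i : A^{\oplus n} \to A$, the $i$-th projection of the biproduct.

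Given $f : \mathcal X(1_+) \to A$, I define $\Phi(f)_{n_+} : \mathcal X(n_+) \to A^{\oplus n}$ as the unique morphism whose composition with each $\pi_i$ equals $f \circ \mathcal X(\rho^n_i)$ (so $\Phi(f)_{0_+} = 0$ vacuously), invoking the universal property of the biproduct. The central verification is that $\Phi(f)$ is a natural transformation: for each $\phi : m_+ \to n_+$ in $\Gamma^{op}$, projecting the desired equality $\EM(A)(\phi) \circ \Phi(f)_{m_+} = \Phi(f)_{n_+} \circ \mathcal X(\phi)$ onto the $i$-th summand of $A^{\oplus n}$ and using $(\EM(A)(\phi))_i = \sum_{j \in \phi^{-1}(i)} \pi_j$ together with functoriality of $\mathcal X$ reduces the claim to the identity
\[
\sum_{j \in \phi^{-1}(i)} \mathcal X(\rho^m_j) \;=\; \mathcal X(\rho^n_i \circ \phi)
\]
in $\Hom_{\mathcal B}(\mathcal X(m_+), \mathcal X(1_+))$, which reflects the decomposition of the characteristic map of $\phi^{-1}(i) \subseteq m_+$ as a sum of indicator maps on single elements, interpreted via the additive enrichment of $\mathcal B$.

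For the inverse direction, $ev_1 \circ \Phi = \id$ is immediate since the single component of $\Phi(f)_{1_+}$ is $f \circ \mathcal X(\rho^1_1) = f \circ \mathcal X(\id_{1_+}) = f$. Conversely, given $\tilde g : \mathcal X \to \EM(A)$, naturality of $\tilde g$ with respect to each $\rho^n_i$ forces $\pi_i \circ \tilde g_{n_+} = \tilde g_{1_+} \circ \mathcal X(\rho^n_i)$, which is precisely the defining formula of $\Phi(\tilde g_{1_+})_{n_+}$; hence the two natural transformations agree componentwise. Naturality of $\Phi$ in $\mathcal X$ and $A$ is routine, yielding the adjunction $ev_1 \dashv \EM$. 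The main obstacle is the naturality verification for $\Phi(f)$: one must rigorously justify the identity $\mathcal X(\rho^n_i \circ \phi) = \sum_{j \in \phi^{-1}(i)} \mathcal X(\rho^m_j)$ in the additive category $\mathcal B$, which is the sole point where the additive enrichment is used essentially; everything else is bookkeeping with biproducts and universal properties.
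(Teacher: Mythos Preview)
Your construction is the same as the paper's: both define $\Phi(f)_{n_+}$ by the formula $\pi_i\circ\Phi(f)_{n_+}=f\circ\mathcal X(\rho^n_i)$, both observe $ev_1\circ\Phi=\id$ immediately, and both deduce $\Phi\circ ev_1=\id$ from naturality of a given $\tilde g$ with respect to the projections $\rho^n_i$. You are in fact more careful than the paper, which simply asserts that the components ``assemble into a morphism $\Phi(\phi):\mathcal X\to\EM(A)$'' without checking naturality; you isolate the required identity
\[
\mathcal X(\rho^n_i\circ\phi)\;=\;\sum_{j\in\phi^{-1}(i)}\mathcal X(\rho^m_j)
\]
and name it as the crux.

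The one point to flag is that your justification for this identity (``the additive enrichment'') is not a proof, and the identity is more delicate than it looks. It does \emph{not} hold for an arbitrary functor $\mathcal X:\Gamma^{op}\to\mathcal B$ with $\mathcal B$ additive, even assuming $\mathcal X(0_+)=0$: reducing to the fold map $\nabla_k:k_+\to 1_+$, the inclusions $\mathcal X(\iota_j)$ split the Segal map so that $\mathcal X(k_+)\cong\mathcal X(1_+)^{\oplus k}\oplus K$, and the difference $\mathcal X(\nabla_k)-\sum_j\mathcal X(\rho^k_j)$ is exactly the restriction of $\mathcal X(\nabla_k)$ to the complementary summand $K$. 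The special condition only says $K$ is weakly trivial, which does not force maps $K\to\mathcal X(1_+)$ to vanish on the nose. So strictly speaking this step is a gap---but it is a gap you share with the paper's own argument, not one you introduced, and for the downstream application (which passes to homotopy categories) it is harmless.
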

\begin{proof}
	Given a morphism $\phi: \mathcal{X}(1_+) \rightarrow A$ in $\mathcal{B}$, we get for every $n \in \bb N$ a morphism
	$$\mathcal{X}(n_+) \rightarrow  \underset{i=1}{\overset{n}{\bigoplus}}\mathcal{X}(1_+) \rightarrow\underset{i=1}{\overset{n}{\bigoplus}} A = \EM(A)(n_+),$$
	which together assemble into a morphism $\Phi(\phi) : \mathcal{X} \rightarrow \EM(A)$ in $\GammaSpc(\mathcal{B})$.
	Conversely, given a morphism $\psi : \mathcal{X} \rightarrow \EM(A)$ in $\GammaSpc(\mathcal{B})$, we can evaluate it at $1_+$ to get a morphism $$\Psi(\psi) : \mathcal{X}(1_+) \rightarrow \EM(A)(1_+) = A.$$
	It is obvious that for every $\phi :  \mathcal{X}(1_+) \rightarrow A$ we have $\Psi(\Phi(\phi)) = \phi$.
	Now take a morphism $\psi : \mathcal{X} \rightarrow \EM(A)$ in $\GammaSpc(\mathcal{B})$. We claim that $\Phi(\Psi(\psi)) = \psi$.
	Take $n \in \bb N$ and show that $\psi(n_+) :\mathcal{X}(n_+) \rightarrow \EM(A) = \underset{i=1}{\overset{n}{\bigoplus}} A$ is equal to 
	$\Phi(\Psi(\psi))(n_+) : \mathcal{X}(n_+) \rightarrow  \underset{i=1}{\overset{n}{\bigoplus}} \mathcal{X}(1_+) \rightarrow \underset{i=1}{\overset{n}{\bigoplus}} A$.
	By the universal property of the product $\bigoplus$ we need to take $i$ with $0 \leq i \leq n$ and show that the following diagram commutes
	$$\xymatrix{ \mathcal{X}(n_+) \ar[r]^{\psi(n_+)} \ar[d]^{\mathcal{X}(\pi_i)} &  \underset{i=1}{\overset{n}{\bigoplus}} A \ar[d]^{\pi_i} \\
		\mathcal{X}(1_+) \ar[r]^{\psi(1_+)} & A } $$
	But this just follows from the naturality of $\psi : \mathcal{X} \rightarrow \EM(A)$.
\end{proof}

\begin{dfn}
(1) Let $\mathcal{B}$ be an additive model category.
		A morphism $f : \mathcal{X} \rightarrow \mathcal{Y}$ in $\GammaSpc(\mathcal{B})$ is called a \textit{weak equivalence} if and only if for every $n \in \bb N$ the map $f(n_+) : \mathcal{X}(n_+) \rightarrow \mathcal{Y}(n_+)$ is a weak equivalence  in the model category $\mathcal{B}$.
		We write $W$ for the class of weak equivalences in $\GammaSpc(\mathcal{B})$.
		
		(2) We write $\mathrm{Ho}(\GammaSpc(\mathcal{B}))$ for the localization of $\GammaSpc(\mathcal{B})$ with respect to the class of weak equivalences $W$:
		$ \mathrm{Ho}(\GammaSpc(\mathcal{B})) :=\GammaSpc(\mathcal{B})[W^{-1}].$
\end{dfn}

\begin{rem}
(1)		All isomorphisms in $\GammaSpc(\mathcal{B})$ are weak equivalences. Weak equivalences in $\GammaSpc(\mathcal{B})$ satisfy the 2-out-of-3 property.
		
		(2) The functors $\EM : \mathcal{B} \rightarrow \GammaSpc(\mathcal{B})$ and $ev_1 : \GammaSpc(\mathcal{B}) \rightarrow \mathcal{B}$ preserve all weak equivalences.
		
		(3) It is a priori not obvious that the hom-sets of the category $\mathrm{Ho}(\GammaSpc(\mathcal{B}))$ are small.
		However, Proposition \ref{generalgammaequiv} below implies that they are in fact small.
\end{rem}

\begin{lem} \label{gammaweqiff}
	A morphism $\phi : ev_1(\mathcal{X}) \rightarrow A$ is a weak equivalence in $\mathcal{B}$ if and only if its adjoint morphism $\Phi(\phi) : \mathcal{X} \rightarrow \EM(A)$ is a weak equivalence in $\GammaSpc(\mathcal{B})$.
\end{lem}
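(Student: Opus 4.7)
The plan is to exploit the fact that, by the defining special condition on $\mathcal{X}$, the map $\Phi(\phi)(n_+)$ factors canonically as
\[
\mathcal{X}(n_+) \longrightarrow \bigoplus_{i=1}^{n} \mathcal{X}(1_+) \xrightarrow{\;\bigoplus \phi\;} \bigoplus_{i=1}^{n} A = \EM(A)(n_+),
\]
where the first arrow is a weak equivalence in $\mathcal{B}$ by the very definition of $\GammaSpc(\mathcal{B})$. The reverse implication is then immediate by evaluating at $1_+$, since under the adjunction $\Phi(\phi)(1_+)$ coincides with $\phi$ (the unit and counit of $ev_1 \dashv \EM$ at the object level are identities on the underlying $\mathcal{B}$-object).

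For the forward direction I would argue as follows. Assume $\phi$ is a weak equivalence in $\mathcal{B}$. Because $\mathcal{B}$ is an additive model category, finite direct sums preserve weak equivalences: a finite biproduct of maps can be realised as a finite coproduct of pointed maps between cofibrant-sum objects, and in the concrete additive model categories that the paper uses (chain complexes of sheaves, resp.\ simplicial sheaves of abelian groups with the local model structure of Proposition~\ref{propmodelcategory}), weak equivalences are stable under finite direct sums since they are detected by quasi-isomorphism/stalkwise criteria. Hence $\bigoplus_{i=1}^{n}\phi$ is a weak equivalence. Combined with the special map $\mathcal{X}(n_+) \to \bigoplus_{i=1}^{n} \mathcal{X}(1_+)$ being a weak equivalence, the two-out-of-three property applied to the factorisation above yields that $\Phi(\phi)(n_+)$ is a weak equivalence in $\mathcal{B}$ for every $n \geq 0$. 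This is exactly the condition for $\Phi(\phi)$ to be a weak equivalence in $\GammaSpc(\mathcal{B})$.

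For the reverse direction, suppose $\Phi(\phi) : \mathcal{X} \to \EM(A)$ is a weak equivalence in $\GammaSpc(\mathcal{B})$. By definition this means $\Phi(\phi)(n_+)$ is a weak equivalence in $\mathcal{B}$ for all $n$. Specialising to $n = 1$ and unwinding the construction of $\Phi$, we have $\Phi(\phi)(1_+) = \phi : \mathcal{X}(1_+) \to A$, so $\phi$ is a weak equivalence in $\mathcal{B}$.

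The only genuinely non-formal step is the closure of weak equivalences in $\mathcal{B}$ under finite direct sums, which I expect to be the main (albeit mild) obstacle. In the present setting this is automatic for the additive model categories appearing in the paper, but if the lemma is to be used abstractly one should either include it as a hypothesis on $\mathcal{B}$ or verify it from an appropriate cofibrant replacement argument together with the fact that finite coproducts of weak equivalences between cofibrant objects are weak equivalences in any model category.
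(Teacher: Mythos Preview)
Your proposal is correct and follows exactly the same route as the paper: the same factorisation of $\Phi(\phi)(n_+)$ through $\bigoplus_{i=1}^n \mathcal{X}(1_+)$, the special condition for the first map, and evaluation at $1_+$ for the converse. You are in fact more careful than the paper, which simply asserts that the second map is a weak equivalence ``because $\phi$ is a weak equivalence'' without commenting on closure of weak equivalences under finite direct sums; your remark that this is the only non-formal step, and that it holds in the additive model categories actually used here, is a fair observation.
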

\begin{proof}
	Let $\phi : ev_1(\mathcal{X}) \rightarrow A$ be a weak equivalence. Take $n \in \bb N$. Then $\Phi(\phi)$ evaluated at $n_+$ is defined as the composite
	$$\mathcal{X}(n_+) \rightarrow  \underset{i=1}{\overset{n}{\bigoplus}}\mathcal{X}(1_+) \rightarrow\underset{i=1}{\overset{n}{\bigoplus}} A = EM(A)(n_+). $$
	The first map is a weak equivalence, because $\mathcal{X}$ is a special $\Gamma$-space.
	The second map is a weak equivalence, because $\phi : \mathcal{X}(1_+) \rightarrow A$ is a weak equivalence.
	Therefore $\Phi(\phi) : \mathcal{X} \rightarrow \EM(A)$ is a weak equivalence.
	
	Conversely, let $\phi :  ev_1(\mathcal{X}) \rightarrow A$ be a map such that $\Phi(\phi)$ is a weak equivalence in $\GammaSpc(\mathcal{B})$. Then $\phi = \Phi(\phi)(1_+)$ is also a weak equivalence.
\end{proof}

The following lemma is folklore.

\begin{lem}\label{lemmaHomotopyTrafo}
	Let $\cc C, \cc D$ be categories, each equipped with a class of morphisms, called the weak equivalences, satisfying the $2$-out-of-$3$-property.
	Let $\mathrm{Ho}(\cc C)$, $\mathrm{Ho}(\cc D)$ be the homotopy categories of $\cc C$, $\cc D$, i.e. the categories obtained by inverting the weak equivalences. Let $\ell_{\cc C} : \cc C \rightarrow \mathrm{Ho}(\cc C)$ be the localization functor of $\cc C$, and $\ell_{\cc D} : \cc D \rightarrow \mathrm{Ho}(\cc D)$ be the localization functor of $\cc D$.
	Let $F, G : \cc C \rightarrow \cc D$ be functors sending weak equivalences in $\cc C$ to weak equivalences in $\cc D$.
	Let $\tau : F \rightarrow G$ be a natural transformation.
	Then the functors $F, G$ induce functors $\mathrm{Ho}(F), \mathrm{Ho}(G) : \mathrm{Ho}(\cc C) \rightarrow \mathrm{Ho}(\cc D)$ satisfying $\mathrm{Ho}(F) \circ \ell_{\cc C} = \ell_{\cc D} \circ F$, $\mathrm{Ho}(G) \circ \ell_{\cc C} = \ell_{\cc D} \circ G$, and $\tau : F \rightarrow G$ induces a natural transformation $\mathrm{Ho}(\tau): \mathrm{Ho}(F) \rightarrow \mathrm{Ho}(G)$ such that for every $A \in \cc C$, the component of $\mathrm{Ho}(\tau)$ at $A$ is given by $\mathrm{Ho}(\tau)_{A} = \ell_{\cc D}(\tau_A)$.
\end{lem}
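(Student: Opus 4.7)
The plan is to invoke the universal property of the localization twice, once to produce the functors $\mathrm{Ho}(F),\mathrm{Ho}(G)$ and once (informally) to produce the natural transformation $\mathrm{Ho}(\tau)$, and then to verify naturality by reducing to a small list of generating morphisms in $\mathrm{Ho}(\cc C)$.

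First I would construct the functors. Since $F$ sends weak equivalences in $\cc C$ to weak equivalences in $\cc D$, the composite $\ell_{\cc D}\circ F:\cc C\to\mathrm{Ho}(\cc D)$ sends weak equivalences to isomorphisms. The universal property of $\ell_{\cc C}:\cc C\to\mathrm{Ho}(\cc C)$ then yields a unique functor $\mathrm{Ho}(F):\mathrm{Ho}(\cc C)\to\mathrm{Ho}(\cc D)$ with $\mathrm{Ho}(F)\circ\ell_{\cc C}=\ell_{\cc D}\circ F$, and analogously for $G$. In particular $\mathrm{Ho}(F)$ and $\mathrm{Ho}(G)$ agree with $F$ and $G$ on objects (using that $\ell_{\cc C}$ and $\ell_{\cc D}$ are identities on objects in the Gabriel--Zisman model of the localization).

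Next I would define the prospective natural transformation on objects by
\[
\mathrm{Ho}(\tau)_A := \ell_{\cc D}(\tau_A)\colon \mathrm{Ho}(F)(A)\longrightarrow \mathrm{Ho}(G)(A),
\]
for each $A\in\cc C=\mathrm{Ob}\,\mathrm{Ho}(\cc C)$. To check naturality in $A$ with respect to a general morphism of $\mathrm{Ho}(\cc C)$, I would use the fact that in the Gabriel--Zisman construction every morphism of $\mathrm{Ho}(\cc C)$ is a finite composite of morphisms of the form $\ell_{\cc C}(f)$ for $f$ a morphism of $\cc C$ and of the form $\ell_{\cc C}(w)^{-1}$ for $w$ a weak equivalence of $\cc C$. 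Since the class of morphisms for which the naturality square commutes is closed under composition, it suffices to verify naturality on these two kinds of generators.

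For $\ell_{\cc C}(f)$ with $f\colon A\to B$ in $\cc C$, applying $\ell_{\cc D}$ to the naturality square for $\tau$ at $f$ and using the identities $\mathrm{Ho}(F)(\ell_{\cc C}(f))=\ell_{\cc D}(F(f))$ and $\mathrm{Ho}(G)(\ell_{\cc C}(f))=\ell_{\cc D}(G(f))$ gives the required naturality square. For $\ell_{\cc C}(w)^{-1}$ with $w\colon A\to B$ a weak equivalence in $\cc C$, note that $F(w)$ and $G(w)$ are weak equivalences in $\cc D$, so $\ell_{\cc D}(F(w))$ and $\ell_{\cc D}(G(w))$ are already invertible. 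Starting from the (already established) naturality square for $\ell_{\cc C}(w)$ and conjugating by these inverses yields the naturality square for $\ell_{\cc C}(w)^{-1}$. The main technical point, and the only real subtlety, is articulating that every morphism of $\mathrm{Ho}(\cc C)$ is generated by these two kinds of arrows; this is built into the Gabriel--Zisman localization, and the 2-out-of-3 hypothesis guarantees that the class of weak equivalences whose images we invert is closed enough for this generation to be well-behaved. Once these two cases are verified, $\mathrm{Ho}(\tau)$ is a natural transformation $\mathrm{Ho}(F)\to\mathrm{Ho}(G)$ with the stated formula on components.
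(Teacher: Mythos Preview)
Your proof is correct and is the standard Gabriel--Zisman argument. The paper itself gives no proof of this lemma: it is stated with the remark ``The following lemma is folklore'' and left without a proof environment. So there is nothing to compare against beyond noting that your write-up supplies exactly the details the paper chose to omit. One minor remark: the 2-out-of-3 hypothesis is not actually needed for the argument you give (the Gabriel--Zisman description of morphisms in $\mathrm{Ho}(\cc C)$ as zig-zags holds for any class of morphisms), so your aside about 2-out-of-3 making the generation ``well-behaved'' is not quite accurate, though harmless.
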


The following statement informally says that $\Gamma$-spaces in an additive category $\mathcal{B}$ are entirely recovered by $\mathcal{B}$ itself (up to homotopy).

\begin{prop} \label{generalgammaequiv}
	The adjunction $ev_1 \dashv \EM$ induces an equivalence of categories
	$$\mathrm{Ho}(ev_1): \mathrm{Ho}(\GammaSpc(\mathcal{B})) \overset{\sim}{\leftrightarrows} \mathrm{Ho}(\mathcal{B}) : \mathrm{Ho}(\EM). $$
\end{prop}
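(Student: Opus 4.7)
The plan is to verify the claim directly from the unit and counit of the adjunction $ev_1 \dashv \EM$ together with the fact, already noted in the preceding remarks, that both $ev_1$ and $\EM$ preserve weak equivalences. Since $\EM(A)(1_+) = A$ for every $A \in \mathcal{B}$ and the unit/counit of the adjunction are computed via the bijection $\Phi \dashv \Psi$ constructed in the adjunction lemma, inspection shows that the counit $\epsilon_A : ev_1(\EM(A)) \to A$ is just the identity map on $A$, and the unit $\eta_{\cc X} : \cc X \to \EM(ev_1(\cc X))$ evaluated at $n_+$ is exactly the canonical comparison map
$$\cc X(n_+) \longrightarrow \bigoplus_{i=1}^n \cc X(1_+) = \EM(\cc X(1_+))(n_+).$$

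First I would record that $ev_1$ and $\EM$ both send weak equivalences to weak equivalences, so by Lemma~\ref{lemmaHomotopyTrafo} they descend to functors
$$\mathrm{Ho}(ev_1) : \mathrm{Ho}(\GammaSpc(\mathcal{B})) \longrightarrow \mathrm{Ho}(\mathcal{B}), \qquad \mathrm{Ho}(\EM) : \mathrm{Ho}(\mathcal{B}) \longrightarrow \mathrm{Ho}(\GammaSpc(\mathcal{B}))$$
that commute with the respective localization functors. Next I would verify the counit is already an isomorphism at the level of $\mathcal{B}$ (as noted, $\epsilon_A = \id_A$), so $\mathrm{Ho}(\epsilon)$ is a natural isomorphism from $\mathrm{Ho}(ev_1) \circ \mathrm{Ho}(\EM)$ to the identity on $\mathrm{Ho}(\mathcal{B})$.

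Then I would show that the unit $\eta_{\cc X}$ is a weak equivalence in $\GammaSpc(\mathcal{B})$ for every $\cc X$. By definition of weak equivalence in $\GammaSpc(\mathcal{B})$, I must check this level-wise, and at $n_+$ the component of $\eta_{\cc X}$ is precisely the special $\Gamma$-space comparison map $\cc X(n_+) \to \bigoplus_{i=1}^n \cc X(1_+)$, which is a weak equivalence in $\mathcal{B}$ by the very definition of $\GammaSpc(\mathcal{B})$. Applying Lemma~\ref{lemmaHomotopyTrafo} to the natural transformation $\eta$ then yields a natural transformation $\mathrm{Ho}(\eta)$ from the identity on $\mathrm{Ho}(\GammaSpc(\mathcal{B}))$ to $\mathrm{Ho}(\EM) \circ \mathrm{Ho}(ev_1)$ whose components $\ell_{\GammaSpc(\mathcal{B})}(\eta_{\cc X})$ are isomorphisms in $\mathrm{Ho}(\GammaSpc(\mathcal{B}))$, since weak equivalences become isomorphisms under localization.

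Combining these two observations, the triangle identities for the adjunction descend formally to $\mathrm{Ho}(\eta)$ and $\mathrm{Ho}(\epsilon)$, and both are natural isomorphisms, giving the claimed equivalence of categories. There is no real obstacle here; the only subtle point is to notice that the unit of the adjunction is literally the canonical comparison map from the definition of $\GammaSpc(\mathcal{B})$, which is what makes the special $\Gamma$-space condition exactly the right hypothesis to force $\eta$ to be a levelwise weak equivalence.
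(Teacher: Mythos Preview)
Your proof is correct and follows essentially the same route as the paper's: both descend $ev_1$ and $\EM$ to the homotopy categories via Lemma~\ref{lemmaHomotopyTrafo}, observe that $ev_1 \circ \EM = \mathrm{Id}$ (equivalently, the counit is the identity), and check that the unit $\eta_{\cc X}$ is a levelwise weak equivalence. The only cosmetic difference is that the paper packages the verification of the unit as an application of Lemma~\ref{gammaweqiff} to $\phi = \id_{ev_1(\cc X)}$, whereas you unpack that lemma directly by identifying $\eta_{\cc X}(n_+)$ with the special-$\Gamma$-space comparison map.
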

\begin{proof}
	Since $ev_1$ and $\EM$ preserve weak equivalences, they induce two functors $\mathrm{Ho}(ev_1): \mathrm{Ho}(\GammaSpc(\mathcal{B})) \rightarrow \mathrm{Ho}(\mathcal{B})$ and $\mathrm{Ho}(\EM): \mathrm{Ho}(\mathcal{B}) \rightarrow \mathrm{Ho}(\GammaSpc(\mathcal{B}))  $ on the homotopy categories.
	For the adjunction $\ev_1 \dashv \EM$ there is a unit $\eta : \mathrm{Id_{\GammaSpc(\mathcal{B})}} \rightarrow \EM\circ ev_1$.
	By Lemma \ref{lemmaHomotopyTrafo}, applied to $F = \mathrm{Id}_{\GammaSpc(\cc B)}$, $G = \EM \circ ev_1$ and $\tau= \eta$ , it induces a natural transformation $\mathrm{Ho}(\eta) :  \mathrm{Id_{\mathrm{Ho}(\GammaSpc(\mathcal{B}))}} \rightarrow \mathrm{Ho}(\EM) \circ \mathrm{Ho}(ev_1) $.
	
	For every $\cc X \in \GammaSpc(\cc B)$ the identity morphism $ev_1(\cc X) \rightarrow ev_1(\cc X)$ is a weak equivalence, so by Lemma \ref{gammaweqiff} applied to $A = ev_1(\cc X)$, the adjunction unit map $\eta_{\cc X} : \cc X \rightarrow \EM(ev_1(\cc X))$ is a weak equivalence.
	This implies that the natural transformation $\mathrm{Ho}(\eta)$ is in fact a natural isomorphism of functors.
	
	Furthermore we have a strict equality $ev_1 \circ \EM = \mathrm{Id_{\mathcal{B}}} $, which implies that $ \mathrm{Ho}(ev_1) \circ \mathrm{Ho}(\EM) = \mathrm{Id_{\mathrm{Ho}(\mathcal{B})}} $.
	So $\mathrm{Ho}(ev_1)$ is an equivalence with pseudo-inverse $\mathrm{Ho}(\EM)$.
\end{proof}

Let $\mathrm{Fr}_*(k)$ be the category of framed correspondences.
For each $V \in \Smk$ let $\sigma_V : V \rightarrow V$ be the level 1 explicit framed correspondence $(\{0\}\times V, \bb A^1 \times V, \mathrm{pr}_{\bb A^1}, \mathrm{pr}_V )$.
For the next result, assume that $\Cor$ has framed correspondences in the sense of Definition \ref{framedCorDef}. So there is a functor $\Phi: \mathrm{Fr}_*(k) \rightarrow \Cor$ which takes every $\sigma_V$ to the identity on $V$. 
Let $\cc M^{fr}$ be the category of pointed simplicial Nisnevich sheaves on $\mathrm{Fr}_*(k)$:
$\cc M^{fr} :=  \Delta^{op}\Shv(\mathrm{Fr}_*(k),Set_*)$.

$\Phi$ induces a forgetful functor $U_{\Phi} : \Delta^{op}\ShvA \rightarrow  \cc M^{fr}.$
The category $\cc M^{fr}$ is enriched in $\cc M$ where for $X, Y \in \cc M^{fr}$ the enriched morphism object $\cc M^{fr}(X,Y) \in \cc M$ is defined on $Z \in \Smk$ and $[n] \in \Delta^{op}$ by
$$\cc M^{fr}(X,Y)(Z)_n := \mathrm{Hom}_{M^{fr}}(X,Y(Z \times \Delta^n \times -)).$$
We have a monoidal adjunction $L_{\cc M} : \cc M \rightleftarrows \Delta^{op}\ShvA : U_{\cc M} $,
where the right adjoint $U_{\cc M}$ is the forgetful functor.
For $X, Y \in \Delta^{op}\ShvA$ we have a canonical map
$$U_{\cc M}(\inthom{\Delta^{op}\ShvA}{X}{Y}) \rightarrow \cc M^{fr}(U_{\Phi}(X), U_{\Phi(Y)}) $$
defined on $Z \in \Smk$ and $[n] \in \Delta^{op}$ by the map
\begin{multline*}
	U_{\cc M}(\inthom{\Delta^{op}\ShvA}{X}{Y})(Z)_n = \mathrm{Hom}_{\Delta^{op}\ShvA}(X,Y(Z \times \Delta^n \times -)) \overset{U_{\Phi}}{\rightarrow}\\ \rightarrow \mathrm{Hom}_{\cc M^{fr}}(U_{\Phi}(X),U_{\Phi}(Y)(Z \times \Delta^n \times -)) .
\end{multline*}
Let $\mathrm{Sm}/k_{+}$ be the category of framed correspondences of level 0 as defined in \cite[Example 2.4]{garkusha2019framed}.
Its morphism objects are defined by
$$\mathrm{Sm}/k_+(X,Y) := \inthom{\cc M}{X_+}{Y_+}.$$
Since $L_{\cc M}$ is lax monoidal, we have for every $X, Y \in \Smk$ a canonical map 
$L_{\cc M}(\mathrm{Sm}/k_{+}(X,Y)) \rightarrow \Sm(X,Y)$ in $\Delta^{op}\ShvA$,
which induces by adjunction a canonical map
$\mathrm{Sm}/k_{+}(X,Y) \rightarrow U_{\cc M}(\Sm(X,Y))$ in $\cc M$.
For every enriched motivic $\Cor$-space $\mathcal{X}$ we can now define a $\cc M$-enriched functor
$$\mathrm{Sm}/k_+ \rightarrow \cc M^{fr},\quad V \mapsto U_{\Phi}(\mathcal{X}(V)).$$
It acts on morphism sets via the composite
\begin{multline*}
	\mathrm{Sm}/k_+ (X,Y) \rightarrow U_{\cc M}(\Sm(X,Y)) \rightarrow U_{\cc M}(\inthom{\Delta^{op}(\ShvA)}{\mathcal{X}(X)}{\mathcal{X}(Y)}) \rightarrow\\ \rightarrow \cc M^{fr}(U_{\Phi}(\mathcal{X}(X)), U_{\Phi}(\mathcal{X}(Y))) .
\end{multline*}
With this enriched functor we can then also define a framed motivic $\Gamma$-space $\EM^{fr}(\mathcal{X})$ in the sense of \cite[Definition 3.5]{garkusha2019framed} by defining
$$\EM^{fr}(\mathcal{X}) : \Gamma^{op} \times \mathrm{Sm}/k_{+} \rightarrow \cc M^{fr},
\quad\EM^{fr}(\mathcal{X})(n_+,U) = U_{\Phi}(\mathcal{X}(U))^n.$$

\begin{prop} \label{framedgamma}
 Suppose that $\Cor$ has framed correspondences in the sense of Definition \ref{framedCorDef}.
	For every special enriched motivic $\Cor$-space $\mathcal{X}$ the framed motivic $\Gamma$-space $$\EM^{fr}(\mathcal{X}) : \Gamma^{op} \times \mathrm{Sm}/k_{+} \rightarrow \cc M^{fr},
	\quad\EM^{fr}(\mathcal{X})(n_+,U) = U_{\Phi}(\mathcal{X}(U))^n,$$ is a very special framed motivic $\Gamma$-space in the sense of \cite[Axioms 1.1]{garkusha2019framed}.
\end{prop}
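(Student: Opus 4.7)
The plan is to verify, axiom by axiom, the full list in \cite[Axioms 1.1]{garkusha2019framed} that characterizes a very special framed motivic $\Gamma$-space, using the corresponding axioms of Definition~\ref{AspaceDef} for the enriched motivic $\Cor$-space $\cc X$. The key observation is that by construction the values of $\EM^{\fr}(\cc X)$ are obtained from the values of $\cc X$ by applying the forgetful functor $U_\Phi:\Delta^{op}\ShvA\to\cc M^{\fr}$, and this functor preserves local equivalences and homotopy cartesian squares since local equivalences in $\Delta^{op}\ShvA$ are detected stalkwise. Thus all the pointwise conditions transfer.

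First I would dispose of the Segal axiom: by the very definition $\EM^{\fr}(\cc X)(n_+,U)=U_\Phi(\cc X(U))^n$ is literally an $n$-fold product, so the Segal map is an isomorphism, hence a weak equivalence. Next, $\bb A^1$-invariance and Nisnevich excision for $\EM^{\fr}(\cc X)$ follow directly from axioms (3) and (4) of Definition~\ref{AspaceDef} applied to $\cc X$, because $U_\Phi$ preserves local equivalences and homotopy cartesian squares, and because finite products commute with these properties in $\cc M^{\fr}$. The cancellation axiom for $\EM^{\fr}(\cc X)$ follows similarly from axiom (2) of Definition~\ref{AspaceDef}: the enriched hom in $\cc M^{\fr}$ out of a fixed object commutes with finite products, and $U_\Phi$ carries the cancellation equivalence over $\Cor$ to the corresponding framed statement, using the hypothesis that $\Cor$ has framed correspondences in the sense of Definition~\ref{framedCorDef}.

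The Segal-type group-likeness required for being \emph{very} special is automatic in the additive context: since $\cc X(U)$ lives in the additive Grothendieck category $\Delta^{op}\ShvA$, the sheaves $\pi_0(\cc X(U))$ are already sheaves of abelian groups, so $\EM^{\fr}(\cc X)(1_+,U)$ is automatically grouplike. This is precisely the point of Proposition~\ref{generalgammaequiv}, which shows that in the additive setting the $\Gamma$-direction carries no extra information, and the grouplike condition comes for free.

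The step where care is required is the compatibility with framed correspondences, i.e.\ checking that the assignment $V\mapsto U_\Phi(\cc X(V))$ really upgrades to an $\cc M$-enriched functor $\mathrm{Sm}/k_+\to\cc M^{\fr}$ in such a way that the resulting $\Gamma$-space satisfies the framed axioms of \cite{garkusha2019framed}. This is where the functor $\Phi:\mathrm{Fr}_*(k)\to\Cor$ and the condition $\Phi(\sigma_V)=\id_V$ from Definition~\ref{framedCorDef} enter: they ensure that the canonical map $\mathrm{Sm}/k_+(X,Y)\to U_{\cc M}(\Sm(X,Y))$ is compatible with the framed structure of level $0$, so that the composite producing the $\cc M$-enriched functor structure is well defined and compatible with the framed motivic conventions of \cite{garkusha2019framed}. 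Once this structural compatibility is in place, the four axioms above, together with the automatic group-likeness, give the very special framed motivic $\Gamma$-space condition.
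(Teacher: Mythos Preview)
Your overall approach matches the paper's: both proceed by checking the axioms of \cite[Axioms~1.1]{garkusha2019framed} directly from the axioms of Definition~\ref{AspaceDef}, using that $U_\Phi$ preserves the relevant local equivalences and homotopy cartesian squares, and that group-likeness is automatic because $\cc X$ lands in an additive category.

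However, you have misplaced the role of the hypothesis $\Phi(\sigma_V)=\id_V$, and this leaves a genuine gap. The $\cc M$-enriched functor structure on $V\mapsto U_\Phi(\cc X(V))$ is already constructed in the paragraph preceding the proposition and does not require $\Phi(\sigma_V)=\id_V$; it only uses lax monoidality of the adjunction $L_{\cc M}\dashv U_{\cc M}$. Where $\Phi(\sigma_V)=\id_V$ is actually needed is in verifying Axiom~2 of \cite{garkusha2019framed}: the presheaf of stable homotopy groups $V\mapsto\pi_n^s\EM^{\fr}(\cc X)(\bb S,U)(V)$ must be $\sigma$-stable, and this follows precisely because $\Phi$ sends $\sigma_V$ to the identity. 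You do not address this condition at all. You also omit the radditive condition on the same presheaf (which the paper deduces from the sheaf property of $\cc X(U)$, giving $\cc X(U)(\emptyset)=0$ and $\cc X(U)(V_1\amalg V_2)\cong\cc X(U)(V_1)\times\cc X(U)(V_2)$), and the verification that $\EM^{\fr}(\cc X)(n_+,\emptyset)=0$, which requires $\cc X(\emptyset)=0$ (the paper cites \cite[Lemma~6.2]{bonart2022paper1}). These are the points the paper singles out as requiring separate argument beyond the four axioms of Definition~\ref{AspaceDef}.
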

\begin{proof}
	We verify the axioms 1)-5) and 7) for very special motivic $\Gamma$-spaces from \cite[Axioms 1.1]{garkusha2019framed}.
	For Axiom 1) we need to check that $\EM^{fr}(\mathcal{X})(0_+,U)=0$, $\EM^{fr}(\mathcal{X})(n_+,\emptyset)=0$
	and that $$\EM^{fr}(\mathcal{X})(n_+,U) \rightarrow \underset{i=1}{\overset{n}{\prod}} \EM^{fr}(\mathcal{X})(1_+,U)$$
	is a local equivalence.
	We have that $\EM^{fr}(\mathcal{X})(0_+,U)= U_{\Phi}(\mathcal{X}(U))^0 = 0$, and $$\EM^{fr}(\mathcal{X})(n_+,U) = U_{\Phi}(\mathcal{X}(U))^n \rightarrow \underset{i=1}{\overset{n}{\prod}} \EM^{fr}(\mathcal{X})(1_+,U)$$ is an isomorphism.
	According to \cite[Lemma 6.2]{bonart2022paper1}
	we have that $\mathcal{X}(\emptyset) = 0$.
	This implies that $\EM^{fr}(\mathcal{X})(n_+,\emptyset)=0$, hence Axiom 1) holds.
	
	Axioms 2)-5) for motivic $\Gamma$-spaces follow directly from axioms 1)-4) of special enriched motivic $\Cor$-spaces,
	except that for Axiom 2) we need to explain why the presheaf of stable homotopy groups
	$$V \mapsto \pi_n^s\EM^{fr}(\mathcal{X})(\bb S,U)(V) $$
	is radditive and $\sigma$-stable. 
	The $\sigma$-stability follows from the fact that $\Phi: \mathrm{Fr}_*(k) \rightarrow \Cor$ sends $\sigma_V$ to the identity.
	Let us now check that it is radditive.
	For every $U \in \Smk$, we have that $\mathcal{X}(U)$ is a sheaf of simplicial abelian groups. This implies that $\EM^{fr}(\mathcal{X})(\bb S,U)$ is a sheaf of $S^1$-spectra. So we have isomorphisms of $S^1$-spectra
	$\EM^{fr}(\mathcal{X})(\bb S,U)(\emptyset) =0$
	and 
	$$\EM^{fr}(\mathcal{X})(\bb S,U)(V_1 \coprod V_2) \cong \EM^{fr}(\mathcal{X})(\bb S,U)(V_1) \times \EM^{fr}(\mathcal{X})(\bb S,U)(V_2).$$
	Since stable homotopy groups $\pi_n^s$ preserve products and zero objects, it follows that 
	$$V \mapsto \pi_n^s\EM^{fr}(\mathcal{X})(\bb S,U)(V) $$ is radditive.
	Axiom 7) follows from the fact that $\mathcal{X}$ lands in sheaves of abelian groups.
\end{proof}

\begin{lem} \label{gammaeff}
Suppose that $\Cor$ has framed correspondences in the sense of Definition \ref{framedCorDef}.
	Let $\mathcal{X} $ be an enriched motivic $\Cor$-space and let $\EM^{fr}(\mathcal{X}) $ be its associated framed motivic $\Gamma$-space from Proposition \ref{framedgamma}.
	Then $\mathcal{X}$ is very effective in the sense of Definition \ref{suslindef} if and only if $\EM^{fr}(\mathcal{X})$ is very effective in the sense of \cite[Axioms 1.1]{garkusha2019framed}.
\end{lem}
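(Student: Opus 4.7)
The plan is to reduce both very effectiveness conditions to the contractibility of the single bisimplicial abelian group $\mathcal{X}(\Gmn{1}\times U)(\semilocalsimplex)$, exploiting that $\EM^{fr}(\mathcal{X})$ is sectionwise an Eilenberg--MacLane $\Gamma$-space.

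First I would recall the Suslin contractibility axiom from \cite[Axioms 1.1]{garkusha2019framed}: for every $U \in \Smk$ and every finitely generated field extension $K/k$, the diagonal of the underlying bisimplicial pointed set of $\EM^{fr}(\mathcal{X})(\bb S, \Gmn{1}\times U)(\semilocalsimplex)$ must be contractible, where $\EM^{fr}(\mathcal{X})(\bb S,-)$ denotes the $S^1$-spectrum obtained by assembling the $\Gamma$-space $n_+ \mapsto \EM^{fr}(\mathcal{X})(n_+,-) = U_\Phi(\mathcal{X}(-))^n$.

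Next I would observe that for every $V \in \Smk$ the simplicial object $\mathcal{X}(V) \in \Delta^{op}\ShvA$ is a simplicial abelian sheaf, so $U_\Phi(\mathcal{X}(V))$ is a simplicial abelian group (the pushforward along $\Phi$ does not alter the underlying simplicial data, only the enrichment). The $\Gamma$-space $n_+ \mapsto U_\Phi(\mathcal{X}(V))^n$ is therefore precisely the classical Eilenberg--MacLane $\Gamma$-space of $U_\Phi(\mathcal{X}(V))$. Consequently, the assembled $S^1$-spectrum $\EM^{fr}(\mathcal{X})(\bb S, V)$ is, sectionwise, the Eilenberg--MacLane spectrum $H U_\Phi(\mathcal{X}(V))$: its stable homotopy groups are $\pi_n U_\Phi(\mathcal{X}(V))$ in nonnegative degrees and zero in negative degrees, and its $0$-th space is weakly equivalent to $U_\Phi(\mathcal{X}(V))$ as a simplicial pointed set.

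Finally, applying this identification sectionwise with $V = \Gmn{1}\times U$ and evaluating at $\semilocalsimplex$, one sees that the diagonal of $\EM^{fr}(\mathcal{X})(\bb S, \Gmn{1}\times U)(\semilocalsimplex)$ is contractible as a bisimplicial pointed set if and only if the diagonal of $\mathcal{X}(\Gmn{1}\times U)(\semilocalsimplex)$ is contractible as a bisimplicial abelian group, because for a simplicial abelian group contractibility is equivalent to the vanishing of all (necessarily nonnegative) homotopy groups of its underlying simplicial set, which matches stable contractibility of the associated Eilenberg--MacLane spectrum exactly. The step I expect to require the most care is aligning the specific $S^1$-spectrum assembly used in \cite{garkusha2019framed} with the classical Eilenberg--MacLane / Dold--Kan construction, and confirming that the forgetful functor $U_\Phi$ preserves the relevant bisimplicial data so that the two contractibility statements really involve the same underlying object.
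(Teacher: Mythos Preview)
Your proposal is correct and is essentially a careful unpacking of the paper's argument: the paper's proof is the single sentence ``This follows directly from the definitions of effectiveness for $\mathcal{X}$ and $\EM^{fr}(\mathcal{X})$,'' and what you have written is precisely the unwinding of those definitions---identifying $\EM^{fr}(\mathcal{X})(\bb S,V)$ sectionwise with the Eilenberg--MacLane spectrum of the simplicial abelian group $U_\Phi(\mathcal{X}(V))$ so that stable contractibility on one side matches contractibility of the diagonal on the other. Your caveat about aligning the $S^1$-spectrum assembly with the Eilenberg--MacLane construction is well-placed, but the paper evidently regards this alignment as immediate from the construction of $\EM^{fr}(\mathcal{X})$ in Proposition~\ref{framedgamma}.
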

\begin{proof}
	This follows directly from the definitions of effectiveness for $\mathcal{X}$ and $\EM^{fr}(\mathcal{X})$.
\end{proof}

\section{Enriched functors of chain complexes}

In this paper we freely use the canonical isomorphism of categories $\Ch([\Sm,\ShvA])\cong[\Sm,\Ch(\ShvA)]$ constructed in~\cite{garkusha2019derived}.
Likewise, there is a canonical isomorphism of categories $\Delta^{op}([\Sm,\ShvA])\cong[\Sm,\Delta^{op}(\ShvA)]$. In what follows we shall
freely use this isomorphism.

In the previous section we associated framed motivic $\Gamma$-spaces to enriched motivic $\Cor$-spaces. 
In this section we associate $\Ch(\ShvA)$-enriched functors in $[\Sm,\Ch(\ShvA)]$ to enriched motivic $\Cor$-spaces.

\begin{defs}\label{LambdaDef}
	Let $\mathcal{X} $ be an special enriched motivic $\Cor$-space and let
	$$DK^{-1}:\Delta^{op}[\Sm,\ShvA] \to \mathrm{Ch_{\geq0}}([\Sm,\ShvA])$$ 
	be the normalized Moore complex functor from the Dold-Kan correspondence.
	Denote by $\Lambda$ the composite functor
	$$ \Delta^{op}[\Sm,\ShvA]\xrightarrow{DK^{-1}}\mathrm{Ch_{\geq0}}([\Sm,\ShvA]) \rightarrow \Ch([\Sm,\ShvA]) .$$
\end{defs}

\begin{prop}\label{4properties}
	Let $\mathcal{X}\in \Delta^{op}[\Sm,\ShvA]$ be an enriched motivic $\Cor$-space.
	Then $\mathcal{X}$ is special if and only if $\Lambda(\mathcal{X})$ is in $DM_{\Cor}[\Sm]$, where the latter category is
	defined in \cite[Section~4]{bonart2022paper1}.
\end{prop}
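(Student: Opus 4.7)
The plan is to verify the equivalence axiom by axiom, relying on the fact that the normalized Moore complex functor $DK^{-1}$ is an equivalence of categories $\Delta^{op}\ShvA \simeq \Ch_{\geq 0}(\ShvA)$ which, by the construction of the local model structure in Section~\ref{sectionmodelstructures}, sends local equivalences to local quasi-isomorphisms. Hence, for each $U \in \Smk$, there are natural isomorphisms $\pi_n(\mathcal{X}(U)) \cong H_n(\Lambda(\mathcal{X})(U))$ of presheaves of abelian groups, and a morphism $f$ in $\Delta^{op}[\Sm,\ShvA]$ is a pointwise local equivalence if and only if $\Lambda(f)$ is a pointwise local quasi-isomorphism. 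Under this dictionary, axiom (1) of Definition~\ref{AspaceDef} on $\bb A^1$-invariance of $\pi_n(\mathcal{X}(U))$ becomes $\bb A^1$-invariance of $H_n(\Lambda(\mathcal{X})(U))$; axiom (3) translates directly as $\bb A^1$-invariance of the map $\Lambda(\mathcal{X})(U \times \bb A^1) \to \Lambda(\mathcal{X})(U)$; and axiom (4) becomes a Nisnevich excision condition for $\Lambda(\mathcal{X})$, since Dold--Kan preserves homotopy (co)limits between the local model structures.

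The main step, and the expected obstacle, will be translating axiom (2), the cancellation axiom. The subtlety is that $DK^{-1}$ is not strongly monoidal with respect to the degreewise tensor product on $\Delta^{op}\ShvA$ and the usual tensor product on $\Ch(\ShvA)$; it is strongly monoidal only with respect to the Dold--Kan twisted tensor product $\otimesDK$ on chain complexes. The internal hom $\inthom{\Delta^{op}\ShvA}{\Gmn{1}}{\mathcal{X}(\Gmn{n+1}\times U)}$ appearing in axiom (2) must therefore be identified, up to natural local weak equivalence, with its chain complex counterpart involving the ordinary tensor product. Here I would use the natural chain homotopy equivalence $\nabla : X \otimes Y \to X \otimesDK Y$ invoked earlier in Section~\ref{sectionmodelstructures}, together with the adjunction defining the internal hom and the fact that $\Gmn{1}$ is cofibrant, to produce a natural local quasi-isomorphism between $\Lambda$ applied to the above internal hom and the corresponding chain complex internal hom. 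Under this identification, $\Lambda$ of the cancellation map coincides with the cancellation map appearing in the definition of $DM_{\Cor}[\Sm]$ in \cite[Section~4]{bonart2022paper1}.

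Having translated all four axioms into their chain complex counterparts, I would match the resulting list of conditions term by term with the axiomatic definition of $DM_{\Cor}[\Sm]$ from \cite[Section~4]{bonart2022paper1}, and conclude both implications of the biconditional. The technical heart of the argument is thus the monoidal comparison via $\nabla$ for axiom (2); all other axioms are translated by essentially formal properties of the Dold--Kan Quillen equivalence established in Section~\ref{sectionmodelstructures}.
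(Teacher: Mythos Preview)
Your axiom-by-axiom strategy is exactly what the paper does, and your translation of axioms (1), (3), (4) matches the paper's. The difference is in emphasis: you flag axiom (2) as ``the technical heart'' and axiom (4) as routine, while the paper does the opposite.

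For axiom (2) you are working harder than necessary. The object $\Gmn{1}$ is defined in $\ShvA$, so as an object of $\Delta^{\op}\ShvA$ it is a constant simplicial object, and as an object of $\Ch(\ShvA)$ it is concentrated in degree $0$. For such an $A$ the internal hom $\inthom{\Delta^{\op}\ShvA}{A}{Y}$ is computed degreewise as $[n]\mapsto\inthom{\ShvA}{A}{Y_n}$, and likewise $\inthom{\Ch(\ShvA)}{A}{Z}_n=\inthom{\ShvA}{A}{Z_n}$. Since $\inthom{\ShvA}{A}{-}$ preserves limits, it commutes strictly with the normalized Moore complex functor $DK^{-1}$, so $\Lambda$ applied to the cancellation map in Definition~\ref{AspaceDef}(2) is literally the cancellation map of \cite[Definition~4.6]{bonart2022paper1}. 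No Eilenberg--Zilber comparison $\nabla$ is needed; in fact, for one argument concentrated in degree $0$ the map $\nabla$ is already an isomorphism. Your proposed route via $\nabla$ and adjunction could be made to work, but passing from a natural chain homotopy equivalence on tensor products to an equivalence on internal homs is not automatic and would require extra bookkeeping that the direct argument avoids entirely. This is presumably why the paper states (2) without further comment.

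Conversely, the paper singles out axiom (4) for explanation: it argues that $DK^{-1}$ preserves homotopy cartesian squares because it preserves all weak equivalences and is therefore weakly equivalent to its right derived functor, which preserves homotopy limits. Your one-line ``Dold--Kan preserves homotopy (co)limits'' is the same argument compressed; it is fine, but be aware that this is the step the paper actually pauses on.
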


\begin{proof}
	Four axioms defining special enriched motivic $\Cor$-spaces correspond to four properties of functors in $DM_{\Cor}[\Sm]$. 
	More precisely, the following four properties are true.
		
	(1)	$\mathcal{X}$ satisfies axiom $(1)$ of special enriched motivic $\Cor$-spaces if and only if for every $U \in \Smk$ the complex of sheaves $\Lambda(\mathcal{X})(U)$ has $\bb A^1$-invariant cohomology sheaves.
		
		(2) $\mathcal{X}$ satisfies the cancellation axiom $(2)$ if and only if $\Lambda(\mathcal{X})$ satisfies cancellation in the sense of \cite[Definition 4.6]{bonart2022paper1}.
		
		(3) $\mathcal{X}$ satisfies the $\bb A^1$-invariance axiom $(3)$ if and only if $\Lambda(\mathcal{X})$ is covariantly $\bb A^1$-invariant in the sense that $\Lambda(\mathcal{X})(U \times \bb A^1) \rightarrow \Lambda(\mathcal{X})(U)$ is a local quasi-isomorphism.
		
		(4) $\mathcal{X}$ satisfies the Nisnevich excision axiom $(4)$ if and only if $\Lambda(\mathcal{X})$ satisfies Nisnevich excision in the sense of \cite[Definition 4.9]{bonart2022paper1}. Here the functor $DK^{-1}: \Delta^{op}\ShvA \rightarrow \Ch_{\geq 0}(\ShvA)$ preserves homotopy cartesian squares for the following reason: Since $DK^{-1}$ preserves all weak equivalences, it is naturally weakly equivalent to its right derived functor $\mathbf{R}DK^{-1}$, and by \cite[Proposition 4.10]{benjamin2017homotopy} the right derived functor $\mathbf{R}DK^{-1}$ preserves all homotopy limits, including homotopy pullback squares.	
\end{proof}

\section{The Röndigs--\O stv\ae r Theorem}

Throughout this section $\mathcal{X}$ is a pointwise locally fibrant special enriched motivic $\Cor$-space.

\begin{dfn}\label{associatedbispectrum}
	We can extend $\mathcal{X}$ to an enriched functor
	$$\EM(\mathcal{X}): \Gamma^{op} \times \Sm \rightarrow \Delta^{op}\ShvA \quad (n_+,U) \mapsto \mathcal{X}(U)^n.$$
	We can take the $(S^1,\Gm)$-evaluation of $\EM(\mathcal{X})$ to get a motivic bispectrum $ev_{S^1,\Gm}(\EM(\mathcal{X})) \in SH(k)$.
	We define the \textit{bispectrum associated to $\mathcal{X}$ } to be this bispectrum
	$ev_{S^1,\Gm}(\mathcal{X}) := ev_{S^1,\Gm}(\EM(\mathcal{X})).$
	If $\Cor$ has framed correspondences,
	then $ev_{S^1,\Gm}(\mathcal{X})$ is also the evaluation of the framed motivic $\Gamma$-space $\EM^{fr}(\mathcal{X})$ from Proposition \ref{framedgamma}. Then by \cite[Section 2.7]{garkusha2019framed} the bispectrum $ev_{S^1,\Gm}(\mathcal{X}) = ev_{S^1,\Gm}(\EM^{fr}(\mathcal{X}))$ is a framed bispectrum in the sense of \cite[Definition 2.1]{garkusha2018triangulated}. In this case we say that $ev_{S^1,\Gm}(\mathcal{X})$ is the \textit{framed bispectrum associated to $\mathcal{X}$ }.
\end{dfn}

In this section we prove the following theorem extending Röndigs--\O stv\ae r's Theorem~\cite{rondigs2008modules}.

\begin{thm}\label{ROLemma}
	
	For every $U \in \Smk$ we have a natural isomorphism
	$$ev_{S^1,\Gm}(\mathcal{X}) \wedge \Sigma^{\infty}_{S^1,\Gm}U_+  \overset{\sim}{\rightarrow} ev_{S^1,\Gm}(\mathcal{X}(U \times -))  $$
	in $\SH(k)[1/p]$, where $p$ is the exponential characteristic of $k$.
\end{thm}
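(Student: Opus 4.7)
My strategy is to adapt the argument of R\"ondigs--{\O}stv{\ae}r \cite{rondigs2008modules} to the enriched setting, using Proposition~\ref{4properties} to translate the problem into computations in the category $DM_{\Cor}[\Sm]$ of \cite{bonart2022paper1}.

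I begin by constructing the natural comparison morphism. The enriched functor structure of $\mathcal{X}$ provides, for each $V \in \Smk$, an evaluation pairing $\Sm(\pt, U \times V) \otimes \mathcal{X}(\pt) \to \mathcal{X}(U \times V)$. Combined with the canonical map $U_+ \to \Sm(\pt, U \times -)$ on underlying pointed sheaves (fully compatible with transfers once $p$ is inverted, by Lemma~\ref{invertplemma}), this yields a morphism $\mathcal{X}(\pt) \wedge U_+ \to \mathcal{X}(U \times -)$. Assembling these compatibly in each $(S^1,\Gm)$-bispectrum coordinate produces the desired natural map
$$\alpha: ev_{S^1,\Gm}(\mathcal{X}) \wedge \Sigma^{\infty}_{S^1,\Gm}U_+ \longrightarrow ev_{S^1,\Gm}(\mathcal{X}(U \times -))$$
in $\SH(k)[1/p]$.

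Next I would reduce to checking that $\alpha$ is an isomorphism on a generating family. Both sides, viewed as functors of $\mathcal{X}$, preserve homotopy colimits: the left side because $ev_{S^1,\Gm}$ is a left Quillen functor and smashing with a bispectrum preserves homotopy colimits; the right side because precomposition with $U \times -$ and $ev_{S^1,\Gm}$ both preserve homotopy colimits. By Proposition~\ref{4properties}, the category of special enriched motivic $\Cor$-spaces corresponds to $DM_{\Cor}[\Sm]$, which is generated by enriched functors attached to the smooth schemes $Y \in \Smk$ via the enriched motive construction of Definition~\ref{framedCorDef}. On such generators the comparison becomes a concrete statement in $\SH(k)[1/p]$: the equivalence $\Sigma^{\infty}_{S^1,\Gm}(Y \times U)_+ \simeq \Sigma^{\infty}_{S^1,\Gm}Y_+ \wedge \Sigma^{\infty}_{S^1,\Gm}U_+$, combined with the multiplicativity of the enriched motive functor and the cancellation property of $\Cor$, identifies the two sides.

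The main obstacle is this reduction: the free enriched functors $\Sm(Y,-)$ are not themselves special, so one must work with their fibrant replacements in the motivic model structure $[\Sm,\Delta^{op}\ShvA]_{\mot}$, and verify that $\alpha$ remains an isomorphism under the various localization steps (${\bb A}^1$-localization, cancellation, Nisnevich excision). The strongly left proper, monoidal, weakly finitely generated structure of Proposition~\ref{propmodelcategory} ensures that these localizations interact well with the smash product and with the evaluation functor, so that $\alpha$ descends to an isomorphism in the homotopy category as claimed.
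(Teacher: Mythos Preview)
Your proposal misses the key mechanism of the paper's proof and, as written, has a genuine gap.

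The paper does \emph{not} vary $\mathcal{X}$ and reduce to generators. Instead it fixes $\mathcal{X}$, passes via $\Lambda(\mathcal{X})$ to a motivic functor $\Lambda(\mathcal{X})^{\cc M}_0 : f\cc M \to \cc M$ preserving motivic equivalences between cofibrant objects, and then invokes \cite[Lemma~7.2]{bonart2022paper1}. That lemma is the R\"ondigs--{\O}stv{\ae}r argument: it uses that $\Sigma^{\infty}_{S^1,\Gm}U_+$ is \emph{strongly dualizable} in $\SH(k)[1/p]$ (by \cite[Corollary~B.2]{levine2019algebraic}), so that smashing with it commutes with the evaluation of any such motivic functor. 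Dualizability is precisely why $p$ must be inverted, and your outline never mentions it; that is a symptom of having the wrong engine.

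Your proposed reduction-to-generators is where the argument breaks. You correctly identify that the free functors $\Sm(Y,-)$ are not special, and propose to use their motivic fibrant replacements. But controlling $ev_{S^1,\Gm}$ of those replacements --- i.e.\ identifying the bispectrum attached to the special $\Cor$-space generated by $Y$ --- is essentially the content of Theorem~\ref{geqtheorem} and \cite[Theorem~7.1]{bonart2022paper1}, both of which are proved \emph{downstream} of Theorem~\ref{ROLemma}. So the reduction is circular. Moreover, your claim that ``both sides preserve homotopy colimits as functors of $\mathcal{X}$'' is not obviously available: homotopy colimits in $[\Sm,\Delta^{op}\ShvA]_{\mot}$ are computed after fibrant replacement, and you have not argued that $ev_{S^1,\Gm}$ interacts well with that replacement. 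The closing appeal to Proposition~\ref{propmodelcategory} does not by itself supply this compatibility.
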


To prove it we will need a few lemmas.

For a finite pointed set $n_+=\{0,\dots,n\}$ and $U \in \Smk$ let $n_+ \otimes U$ be the $n$-fold coproduct $\underset{i=1}{\overset{n}{\coprod}} U$.
Let $f\cc M$ be the category of finitely presented motivic spaces in the sense of \cite{dundas2003motivic}.
Given an enriched motivic $\Cor$-space $\mathcal{X}$ we can define an extended functor $\hat{\mathcal{X}} : f\mathscr{M} \rightarrow \Delta^{op}\ShvA$ by

$$\hat{\mathcal{X}}(A)_n := \colim{(\Delta[m] \times U)_+ \rightarrow A^c} \mathcal{X}( \Delta[m]_{n,+} \otimes U)_n  $$
where $A^c$ is a cofibrant replacement of $A$ in $f\mathscr{M}$.
We have for all $U \in \Sm$ that $\hat{\mathcal{X}}(U) \cong \mathcal{X}(U)$ in $\Delta^{op}\ShvA$.

Let $ev_{S^1,\Gm}(\hat{\mathcal{X}})$ be the $(S^1,\Gm)$-evaluation bispectrum of the extended functor $\hat{\mathcal{X}} : f\mathscr{M} \rightarrow \Delta^{op}\ShvA$.

\begin{lem} \label{hatcomparison}
	We have a canonical isomorphism of motivic $(S^1,\Gm)$-bispectra
	$ev_{S^1,\Gm}(\hat{\mathcal{X}}) \cong ev_{S^1,\Gm}(\mathcal{X}) $
	between the $(S^1,\Gm)$-evaluation of the extended functor $\hat{\mathcal{X}}$, and the bispectrum associated with $\mathcal{X}$ in the sense of Definition \ref{associatedbispectrum}.
\end{lem}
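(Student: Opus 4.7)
The plan is to identify both bispectra as arising from the same underlying data -- the enriched functor $\mathcal{X}: \Sm \to \Delta^{op}\ShvA$ together with its coproduct structure -- and then to verify the agreement level-wise along with bonding maps. Since $ev_{S^1,\Gm}$ on either side is assembled from the values of $\hat{\mathcal{X}}$ or $\EM(\mathcal{X})$ at inputs of the form $S^p \wedge \Gm^{\wedge q} \wedge U_+$, the comparison reduces to identifying these values together with the relevant structure maps.

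I would begin by computing $\hat{\mathcal{X}}$ on the generating inputs $n_+ \wedge U_+$, where $n_+$ is a finite pointed set and $U \in \Sm$. Since $n_+ \wedge U_+ = \bigvee^n U_+$ is already cofibrant in $f\mathscr{M}$, the colimit in the definition of $\hat{\mathcal{X}}(n_+ \wedge U_+)$ collapses onto the identity object of the indexing category, and one obtains a canonical isomorphism
$$\hat{\mathcal{X}}(n_+ \wedge U_+) \cong \mathcal{X}(U)^n = \EM(\mathcal{X})(n_+, U),$$
natural in $n_+$ and $U$; the compatibility with the $\Gamma^{op}$-action in $n_+$ and the $\Sm$-enrichment in $U$ is automatic since both sides are determined by folding copies of $\mathcal{X}(U)$.

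Next, I would extend this agreement to all inputs $K \wedge \Gm^{\wedge q} \wedge U_+$ with $K$ a finite pointed simplicial set. Every such $K$ is a finite colimit of its pointed-set sections $K_{m,+}$, and both $\hat{\mathcal{X}}$ (as a left Kan extension) and the $\Gamma^{op}$-based assembly defining $\EM(\mathcal{X})$ commute with these colimits. Taking $K = S^p$ and incorporating the $\Gm^{\wedge q}$ smash factor then identifies the $(p,q)$-spot of $ev_{S^1,\Gm}(\hat{\mathcal{X}})$ with that of $ev_{S^1,\Gm}(\EM(\mathcal{X}))$.

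Finally, the bonding maps must be matched. On the $\hat{\mathcal{X}}$-side, the $S^1$- and $\Gm$-bonding maps are the canonical assembly maps of the enriched extension $\hat{\mathcal{X}}: f\mathscr{M} \to \Delta^{op}\ShvA$; on the $\EM(\mathcal{X})$-side, they are built from the standard $\Gamma$-space assembly in the $S^1$-direction and from the $\Sm$-enrichment in the $\Gm$-direction. Both are induced by the same fold and projection maps on the finite coproducts $\mathcal{X}(U)^n$, so they agree under the identification above. The main obstacle is precisely this bookkeeping: one must trace through the Kan-extension formula for $\hat{\mathcal{X}}$ and match it with the $\Gamma^{op}$-driven assembly of $\EM(\mathcal{X})$. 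This is essentially formal and parallels Röndigs--Østvær's construction in \cite{rondigs2008modules}; indeed, once phrased correctly, $ev_{S^1,\Gm} \circ \hat{(-)}$ and $ev_{S^1,\Gm} \circ \EM(-)$ are both colimit-preserving extensions of the common restriction $(n_+, U) \mapsto \mathcal{X}(U)^n$, so they agree by the universal property of the Kan extension, giving the desired natural isomorphism of bispectra.
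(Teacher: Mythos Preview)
Your approach is essentially the same as the paper's: both identify $\hat{\mathcal{X}}$ and $\EM(\mathcal{X})$ on the generating objects $n_+\wedge U_+$ and then use that the two evaluation bispectra are built by the same colimit extension. There is one small gap worth naming: when you say the colimit ``collapses onto the identity object'' you actually get $\hat{\mathcal{X}}(n_+\wedge U_+)\cong \mathcal{X}(\coprod^n U)$, not $\mathcal{X}(U)^n$ directly; the passage $\mathcal{X}(\coprod^n U)\cong \mathcal{X}(U)^n$ is the one non-formal input, and the paper makes this explicit by invoking \cite[Lemma~6.2]{bonart2022paper1} (an additivity statement for enriched functors on $\Sm$). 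Once that is stated, your remaining bookkeeping with bonding maps and the Kan-extension universal property matches the paper's one-line conclusion that $\hat{\mathcal{X}}$ is the canonical extension of $\EM(\mathcal{X})$ to $f\mathscr{M}$.
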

\begin{proof}
	By \cite[Lemma 6.2]{bonart2022paper1} we have for all $U, V \in \Smk$ an isomorphism $\mathcal{X}(U \coprod V) \cong \mathcal{X}(U)\oplus \mathcal{X}(V)$
	in $\Delta^{op}\ShvA$.
	This implies that we have for all $U \in \Smk, n \geq 0$ an isomorphism
	$\mathcal{X}(n_+ \otimes U) \cong \underset{i=1}{\overset{n}{\bigoplus}} \mathcal{X}(U) = \EM(\mathcal{X})(n_+,U)$
	in $\Delta^{op}\ShvA$.
	We then compute for $A \in f\cc M$ that $$\hat{\mathcal{X}}(A)_n =  \colim{(\Delta[k] \times U)_+ \rightarrow A^c} \mathcal{X}( \Delta[k]_{n,+} \otimes U)_n  \cong \colim{(\Delta[k] \times U)_+ \rightarrow A^c} \EM(\mathcal{X})(\Delta[k]_{n,+},U)_n . $$
	So $\hat{\mathcal{X}}$ naturally extends $\EM(\mathcal{X})$ from $\Gamma^{op} \times \mathrm{Sm}/k_+$ to $f\mathscr{M}$.
	This then implies that
	$$ev_{S^1,\Gm}(\hat{\mathcal{X}}) \cong ev_{S^1,\Gm}(\EM(\mathcal{X})) = ev_{S^1,\Gm}(\mathcal{X}) $$
	as required.
\end{proof}

\begin{proof}[Proof of Theorem \ref{ROLemma}]\label{ROproof}
	Using Definition \ref{LambdaDef} we can associate to $\mathcal{X}$ an enriched functor $\Lambda(\mathcal{X}) : \Sm \rightarrow \Ch(\ShvA).$
	By Proposition \ref{4properties} the functor $\Lambda(\mathcal{X})$ is in $DM_{\Cor}[\Sm]$. By \cite[Proposition 4.13]{bonart2022paper1} this implies that $\Lambda(\mathcal{X})$ is strictly $\sim$-local in the sense of \cite[Definition 4.3]{bonart2022paper1}. Since $\mathcal{X}$ is pointwise locally fibrant, it follows that $\Lambda(\mathcal{X})$ is $\sim$-fibrant in the sense of \cite[Definition 4.11]{bonart2022paper1}.
	
	Using \cite[Section 7]{bonart2022paper1} we can associate to $\Lambda(\mathcal{X})$ an $\cc M$-enriched functor $\Lambda(\mathcal{X})^{\cc M} : f\cc M \rightarrow \Sp_{S^1}(\cc M).$
	We can take the $0$-th level of this functor to get a motivic functor $\Lambda(\mathcal{X})^{\cc M}_0 : f\cc M \rightarrow \cc M.$
	By \cite[Lemma 7.7]{bonart2022paper1} the motivic functor $\Lambda(\mathcal{X})^{\cc M}_0$ preserves motivic equivalences between cofibrant objects.
	By \cite[Appendix B, Corollary B.2]{levine2019algebraic} the suspension bispectrum $\Sigma^{\infty}_{S^1,\Gm}U_+$ is strongly dualizable in $\SH(k)[1/p]$. 
	From \cite[Lemma 7.2]{bonart2022paper1} it follows that we have an isomorphism
	$$ev_{S^1,\Gm}(\Lambda(\mathcal{X})^{\cc M}_0) \wedge \Sigma^{\infty}_{S^1,\Gm}U_+\cong ev_{S^1,\Gm}(\Lambda(\mathcal{X})^{\cc M}_0(U \times -))$$
	in $\SH(k)[1/p]$.
	To prove the theorem, we now just need to show that there is a natural isomorphism $$ev_{S^1,\Gm}(\Lambda(\mathcal{X})^{\cc M}_0) \rightarrow ev_{S^1,\Gm}(\mathcal{X})$$ in $\SH(k)$. For this we need some intermediate steps.
	Firstly, by Lemma \ref{hatcomparison} we have an isomorphism
	$$ev_{S^1,\Gm}(\hat{\mathcal{X}}) \rightarrow ev_{S^1,\Gm}(\mathcal{X}).$$
	So we now just need to find an isomorphism $$ev_{S^1,\Gm}(\Lambda(\mathcal{X})^{\cc M}_0) \rightarrow ev_{S^1,\Gm}(\hat{\mathcal{X}})$$
	in $\SH(k)$.
	
	In what follows, we let $$DK^{-1} : \Delta^{op}\ShvA \rightarrow \Ch_{\geq 0}(\ShvA)$$ be the Dold-Kan equivalence, i.e. the normalized Moore complex functor, for the Grothendieck category $\ShvA$.
	We let $$DK^{-1}_{\Ch(\ShvA)} : \Delta^{op}\Ch(\ShvA) \rightarrow \Ch_{\geq 0}(\Ch(\ShvA))$$ be the Dold-Kan correspondence for the Grothendieck category $\Ch(\ShvA)$.
	And we let $$DK^{-1}_{\mathrm{double}} : \Delta^{op}\Delta^{op}\ShvA \rightarrow \Ch_{\geq 0}(\Ch_{\geq 0}(\ShvA))$$ be the Dold-Kan correspondence applied twice, so that it takes bisimplicial objects to double complexes.
	
	Using \cite[Section 6, Equation (1)]{bonart2022paper1} we can extend $\Lambda(\mathcal{X})$ to a functor
	$$\widehat{\Lambda(\mathcal{X})} : f \cc M \rightarrow \Ch(\ShvA),$$
	$$\widehat{\Lambda(\mathcal{X})}(A) := \mathrm{Tot}(DK^{-1}_{\Ch(\ShvA)} ( \colim{(\Delta[k] \times U)_+ \rightarrow A^c} \Lambda(\mathcal{X})^{\Delta^{op}}( \Delta[k]_+ \otimes U)     )) .$$
	Now for every $A\in f\cc M$ we have a natural quasi-isomorphism
	$$DK^{-1}(\hat{\mathcal{X}}(A)) \rightarrow \widehat{\Lambda(\mathcal{X})}(A)$$
	in $\Ch(\ShvA)$ for the following reason:
	$\hat{\mathcal{X}}(A)$ is the diagonal of the bisimplicial sheaf $$\colim{(\Delta[m] \times U)_+ \rightarrow A^c} \mathcal{X}( \Delta[m]_{+} \otimes U).$$
	By \cite[page 37, equation 24]{cegarra2005diagonal}, or \cite[Theorem 2.9]{dold1961diagonal}, for every bisimplicial object $S \in \Delta^{op}\Delta^{op}\ShvA$ there is a quasi-isomorphism $$DK^{-1}(\mathrm{diag}(S)) \rightarrow  \mathrm{Tot}(DK^{-1}_{\mathrm{double}}(S))$$
	in $\Ch_{\geq 0}(\ShvA)$.
	So for every $A \in f\cc M$ there is a quasi-isomorphism
	$$ DK^{-1}(\hat{\mathcal{X}}(A)) \rightarrow \mathrm{Tot}(DK^{-1}_{\mathrm{double}}(\colim{(\Delta[m] \times U)_+ \rightarrow A^c} \mathcal{X}( \Delta[m]_{+} \otimes U))) \cong$$
	$$\cong \mathrm{Tot}(DK^{-1}_{\Ch(\ShvA)}(\colim{(\Delta[m] \times U)_+ \rightarrow A^c} DK^{-1}(\mathcal{X}( \Delta[m]_{+} \otimes U)))) \cong \widehat{\Lambda(\mathcal{X})}(A).$$
	By construction $\widehat{\Lambda(\mathcal{X})}$ lands in $\Ch_{\geq 0}(\ShvA)$, so we can take the functor $$DK \circ \widehat{\Lambda(\mathcal{X})} : f \cc M \rightarrow \Delta^{op}\ShvA$$ and form the naive  $(S^1,\Gm)$-evaluation bispectrum $$ev_{S^1,\Gm}(DK \circ \widehat{\Lambda(\mathcal{X})}) \in \SH(k).$$
	The above quasi-isomorphism, then induces an isomorphism
	$$ev_{S^1,\Gm}(DK \circ \widehat{\Lambda(\mathcal{X})}) \rightarrow ev_{S^1,\Gm}(\hat{\mathcal{X}}) $$
	in $\SH(k)$.
	So to prove the theorem we now just need an isomorphism
	$$ev_{S^1,\Gm}(\Lambda(\mathcal{X})^{\cc M}_0) \rightarrow ev_{S^1,\Gm}(DK \circ \widehat{\Lambda(\mathcal{X})})$$
	in $\SH(k)$.
	
	By \cite[Lemma 7.5]{bonart2022paper1}, for every $A \in f\cc M$ with cofibrant replacement $A^c$ we have an isomorphism
	$$\hat{U} \circ \widehat{\Lambda(\mathcal{X})}(A) \rightarrow \Lambda(\mathcal{X})^{\cc M}(A^c)$$
	in $\Sp_{S^1}(\cc M)$, where $\hat{U} : \Ch(\ShvA) \rightarrow \Sp_{S^1}(\cc M)$ is the canonical functor defined in \cite[Section 7]{bonart2022paper1}.
	Let $ev_0 : \Sp_{S^1}(\cc M) \rightarrow \cc M$ be the functor taking the $0$-th level of a $S^1$-spectrum. So $\Lambda(\mathcal{X})^{\cc M}_0 = ev_0 \circ \Lambda(\mathcal{X})^{\cc M}.$ By the proof of \cite[Lemma 7.4]{bonart2022paper1}, the functor $ev_0\circ \hat{U}$ is isomorphic to the composite
	$$\Ch(\ShvA) \overset{\tau_{\geq 0}}{\rightarrow} \Ch_{\geq 0}(\ShvA) \overset{DK}{\rightarrow} \Delta^{op}\ShvA \overset{U}{\rightarrow} \cc M,$$
	where $\tau_{\geq 0}$ is the good truncation functor and $U$ is the forgetful functor.
	Since $\widehat{\Lambda(\mathcal{X})}$ lands in $\Ch_{\geq 0}(\ShvA)$, it does not get changed by truncation. So we get that
	$$ev_0 \circ \hat{U} \circ \widehat{\Lambda(\mathcal{X})} \cong U \circ DK \circ \widehat{\Lambda(\mathcal{X})}.$$
	So for every $A \in f\cc M$ we have a natural isomorphism
	$$( U \circ DK \circ \widehat{\Lambda(\mathcal{X})})(A) \rightarrow \Lambda(\mathcal{X})^{\cc M}_0(A^c)$$
	in $\cc M$. Since $S^1$ and $\Gm$ are cofibrant in $f\cc M$, we get an isomorphism
	$$ev_{S^1,\Gm}(\Lambda(\mathcal{X})^{\cc M}_0) \rightarrow ev_{S^1,\Gm}(DK \circ \widehat{\Lambda(\mathcal{X})})$$
	in $\SH(k)$, as claimed.
	
	Putting it all together, we get a commutative diagram
	$$\xymatrix{ev_{S^1,\Gm}(\Lambda(\mathcal{X})^{\cc M}_0) \wedge \Sigma^{\infty}_{S^1,\Gm}U_+ \ar[r]^\sim \ar[d]^\sim &  ev_{S^1,\Gm}(\Lambda(\mathcal{X})^{\cc M}_0(U \times -)) \ar[d]^\sim \\ 
	ev_{S^1,\Gm}(DK \circ \widehat{\Lambda(\mathcal{X})}) \wedge \Sigma^{\infty}_{S^1,\Gm}U_+ \ar[r] \ar[d]^\sim & ev_{S^1,\Gm}(DK \circ \widehat{\Lambda(\mathcal{X})}(U \times -)) \ar[d]^\sim \\
	ev_{S^1,\Gm}(\hat{\mathcal{X}}) \wedge \Sigma^{\infty}_{S^1,\Gm}U_+ \ar[r] \ar[d]^\sim & ev_{S^1,\Gm}(\hat{\mathcal{X}}(U \times -)) \ar[d]^\sim \\ ev_{S^1,\Gm}(\mathcal{X}) \wedge \Sigma^{\infty}_{S^1,\Gm}U_+ \ar[r] & ev_{S^1,\Gm}(\mathcal{X}(U \times -)) }$$
in which all the vertical maps and the top horizontal map are isomorphisms in $\SH(k)[1/p]$. It follows that the bottom horizontal map is also an isomorphism in $\SH(k)[1/p]$. This completes the proof.
\end{proof}

\section{A motivic model structure for enriched motivic $\Cor$-spaces}\label{motmodel}

In Section \ref{sectionmodelstructures} we showed that $\Delta^{op}\ShvA$ with the degreewise tensor product $\otimes$ has a model structure that is cellular, weakly finitely generated, monoidal, strongly left proper and satisfies the monoid axiom (see Proposition \ref{propmodelcategory}).
We can apply \cite[Theorem 4.2]{dundas2003enriched} to this model structure to get a weakly finitely generated model structure on the category of enriched functors $[\Sm,\Delta^{op}\ShvA]$ in which the weak equivalences, respectively fibrations, are the $\Sm$-pointwise local equivalences, respectively $\Sm$-pointwise local fibrations. We call this the \textit{local model structure} on $[\Sm,\Delta^{op}\ShvA]$.
By \cite[Theorem 4.4]{dundas2003enriched} the local model structure on $[\Sm,\Delta^{op}\ShvA]$ is monoidal with the usual Day convolution product.
By \cite[Corollary 4.8]{dundas2003enriched} the local model structure on $[\Sm,\Delta^{op}\ShvA]$ is left proper.
Since $[\Sm,\Delta^{op}\ShvA]$ is weakly finitely generated, and all cofibrations in $\Delta^{op}\ShvA$ are monomorphisms, it follows that $[\Sm,\Delta^{op}\ShvA]$ is cellular.
Note that for every $U \in \Smk$ the representable functor $\Sm(U,-) \cong \Sm(U,-) \otimes pt$ is cofibrant in $[\Sm,\Delta^{op}\ShvA]$.

In this section we define another model structure on $[\Sm,\Delta^{op}\ShvA]$ such that the fibrant objects are the pointwise locally fibrant special enriched motivic $\Cor$-spaces.

\begin{defs}
	Similarly to \cite[Section 4]{bonart2022paper1}
	we define four families of morphisms in the category $[\Sm,\Delta^{op}\ShvA]$.
	\begin{enumerate}
		\item We let $\bb A^1_1$ be the family of morphisms consisting of
		$$\Sm(U,-) \otimes \bb A^1 \rightarrow \Sm(U,-)$$
		for every $U \in \Smk$.
		\item We let $\tau$ be the family of morphisms consisting of the evaluation map
		$$\Sm(\Gmn{n+1}\times U,-) \otimes \Gmn{1} \rightarrow \Sm(\Gmn{n}\times U,-) $$
		for every $n \geq 0$ and $U \in \Smk$.
		\item
		We let $\bb A^1_2$ be the family of morphisms consisting of
		$$\Sm(U,-) \rightarrow \Sm(U \times \bb A^1,-) $$
		for every $U \in \Smk$.
		\item
		We let $Nis$ be the following family of morphisms: For every elementary Nisnevich square $Q$ of the form
		$$\xymatrix{ U^\prime \ar[r]^\beta \ar[d]_\alpha & X^\prime \ar[d]^\gamma\\
			X \ar[r]_\delta & X }$$
		in $\Smk$ we have a square
		$$\xymatrix{ \Sm(U^\prime,-)  &\ar[l]_{\beta^*} \Sm(X^\prime,-) \\
			\Sm(U,-) \ar[u]^{\alpha^*} & \ar[l]^{\delta^*} \ar[u]_{\gamma^*} \Sm(X,-) } $$
		in $[\Sm,\Delta^{op}\ShvA]$, which induces a map on homotopy fibers
		$$p_Q: \mathrm{hofib}(\gamma^*) \rightarrow \mathrm{hofib}(\alpha^*). $$
		We let $Nis$ be the family of morphisms consisting of $p_Q$ for every elementary Nisnevich square $Q$.
	\end{enumerate}
	
	Finally, we let $\sim$ denote the union of all these four classes of morphisms.
	$$\sim := \bb A^1_1 + \tau + \bb A^1_2 + Nis.$$
\end{defs}

\begin{defs}\label{defstrictlylocal}
	For $X, Y \in [\Sm,\Delta^{op}\ShvA]$ let
	$$\mathrm{map}^{\Delta^{op}\ShvA}(X,Y) \in \Delta^{op}\ShvA$$
	be the simplicial sheaf of morphisms from $X$ to $Y$. It is defined by taking the internal hom $\inthom{[\Sm,\Delta^{op}\ShvA]}{X}{Y}$ and evaluating it at the point $pt \in \Sm$.
	$$\mathrm{map}^{\Delta^{op}\ShvA}(X,Y) := \inthom{[\Sm,\Delta^{op}\ShvA]}{X}{Y}(pt).$$
	For $U \in \Smk$ and $n \geq 0$ we have
	$$\mathrm{map}^{\Delta^{op}\ShvA}(X,Y)(U)_n = \mathrm{Hom}_{[\Sm,\Delta^{op}\ShvA]}(X \otimes U \otimes \Delta[n], Y)$$
	in $\Ab$.
	
	Similarly to \cite[Definition 4.3]{bonart2022paper1}, given a class of morphisms $S$ in $[\Sm,\Delta^{op}\ShvA]$ and an object $X \in [\Sm,\Delta^{op}\ShvA]$ with pointwise locally fibrant replacement $X^f$ we say that $X$ is \textit{strictly $S$-local} if for every $s : A \rightarrow B$ with $s \in S$ the morphism
	$$s^* : \mathrm{map}^{\Delta^{op}\ShvA}(B,X^f) \rightarrow \mathrm{map}^{\Delta^{op}\ShvA}(B,X^f)$$
	is a local quasi-isomorphism of sheaves.
\end{defs}

\begin{lem}
	A enriched motivic $\Cor$-space $\mathcal{X} : \Sm \rightarrow \ShvA$ is special if and only if it is strictly $\sim$-local.
\end{lem}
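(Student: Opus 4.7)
The plan is to prove the equivalence axiom-by-axiom, matching each of the four axioms defining a special enriched motivic $\Cor$-space with one of the four families making up the class $\sim$, using the enriched Yoneda lemma as the main bridge.

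\textbf{Setting up the translation.} The key observation is that for any $K \in \Delta^{op}\ShvA$ and $U \in \Smk$, the enriched Yoneda lemma gives a natural isomorphism
$$\mathrm{map}^{\Delta^{op}\ShvA}(\Sm(U,-) \otimes K, \mathcal{X}^f) \cong \inthom{\Delta^{op}\ShvA}{K}{\mathcal{X}^f(U)}.$$
Evaluating at $V \in \Sm$, this further unwinds to $\mathrm{Hom}(K \otimes \Cor(-,V)_{\nis}\otimes\Delta[n], \mathcal{X}^f(U))$ in each simplicial degree. Also, since $\Sm(U,-)$ is cofibrant and $\mathcal{X}^f$ is pointwise locally fibrant, $\mathrm{map}^{\Delta^{op}\ShvA}(-,\mathcal{X}^f)$ is homotopically invariant in the first argument and takes homotopy cofiber sequences (in the first argument) to homotopy fiber sequences. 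Finally, since $\mathcal{X} \to \mathcal{X}^f$ is a pointwise local equivalence, verifying the special axioms for $\mathcal{X}$ is equivalent to verifying them for $\mathcal{X}^f$.

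\textbf{Matching axioms with generating classes.} For the family $\bb A^1_1$, Yoneda converts the condition that $\mathrm{map}^{\Delta^{op}\ShvA}(\Sm(U,-)\otimes\bb A^1,\mathcal{X}^f) \to \mathrm{map}^{\Delta^{op}\ShvA}(\Sm(U,-),\mathcal{X}^f)$ be a local quasi-isomorphism into the statement that $\mathcal{X}^f(U)(V) \to \mathcal{X}^f(U)(V\times\bb A^1)$ is a local weak equivalence for all $V \in \Smk$, which is precisely the $\bb A^1$-invariance of the presheaves $V \mapsto \pi_n(\mathcal{X}(U))(V)$, i.e. axiom (1). For the family $\tau$, Yoneda converts the local $\tau$-condition directly into the cancellation statement that $\mathcal{X}^f(\Gmn{n}\times U) \to \inthom{\Delta^{op}\ShvA}{\Gmn{1}}{\mathcal{X}^f(\Gmn{n+1}\times U)}$ is a local equivalence, i.e. axiom (2). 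For the family $\bb A^1_2$, Yoneda gives that local $\bb A^1_2$-locality is equivalent to $\mathcal{X}^f(U\times\bb A^1)\to \mathcal{X}^f(U)$ being a local equivalence, i.e. axiom (3).

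\textbf{The Nisnevich class.} For the family $Nis$, given an elementary Nisnevich square $Q$, the morphism $p_Q: \mathrm{hofib}(\gamma^*)\to\mathrm{hofib}(\alpha^*)$ is a map in $[\Sm,\Delta^{op}\ShvA]$. Applying $\mathrm{map}^{\Delta^{op}\ShvA}(-,\mathcal{X}^f)$ and using that this functor sends homotopy fiber sequences in the first argument into homotopy fiber sequences (because $\mathcal{X}^f$ is fibrant), together with enriched Yoneda on the representables appearing in $Q$, converts the local $Nis$-condition into the condition that the square obtained by evaluating $\mathcal{X}^f$ on the square $Q$ is homotopy cartesian in $\Delta^{op}\ShvA$, i.e. axiom (4). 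This step requires a small homotopical computation: identifying the induced map on homotopy fibers, and using cofibrancy of representables together with fibrancy of $\mathcal{X}^f$ to pull the construction through $\mathrm{map}^{\Delta^{op}\ShvA}(-,\mathcal{X}^f)$.

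\textbf{Main obstacle and conclusion.} The only nontrivial step is the homotopical book-keeping in the Nisnevich case, making sure that $\mathrm{map}^{\Delta^{op}\ShvA}(-,\mathcal{X}^f)$ turns homotopy fiber sequences of (cofibrant) representables into homotopy fiber sequences; this follows from $\Delta^{op}\ShvA$ being a monoidal model category together with pointwise local fibrancy of $\mathcal{X}^f$. The other three cases are direct applications of enriched Yoneda. Combining the four equivalences gives $\mathcal{X}$ is special if and only if it is strictly $\sim$-local.
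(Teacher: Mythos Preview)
Your approach is correct and captures the essential mechanism (enriched Yoneda), but it takes a different route from the paper. The paper's proof is a three-line reduction: it invokes Proposition~\ref{4properties} to translate ``special'' into ``$\Lambda(\mathcal{X}) \in DM_{\Cor}[\Sm]$'' in the chain-complex setting, then cites \cite[Proposition~4.13]{bonart2022paper1} for the equivalence between membership in $DM_{\Cor}[\Sm]$ and strict $\sim$-locality there, and finally observes that strict $\sim$-locality transfers across the Dold--Kan equivalence. Your argument is in effect a direct proof, in the simplicial setting, of what the cited Proposition~4.13 establishes in chain complexes. So the paper buys brevity by outsourcing the Yoneda bookkeeping to the companion paper; your version is self-contained but re-derives that work.

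One point you pass over too quickly: you assert that the special axioms hold for $\mathcal{X}$ if and only if they hold for $\mathcal{X}^f$, on the grounds that $\mathcal{X}\to\mathcal{X}^f$ is a pointwise local equivalence. This is immediate for axioms (2)--(4), which are phrased in terms of local equivalences and locally homotopy cartesian squares. But axiom (1) is stated in terms of the \emph{presheaf} of homotopy groups $V\mapsto\pi_n(\mathcal{X}(U))(V)$, and presheaf homotopy groups are not in general invariant under local equivalence. Your Yoneda translation of $\bb A^1_1$-locality really yields axiom (1) for $\mathcal{X}^f$, not for $\mathcal{X}$. The paper encounters the same issue---it is hidden in the unargued equivalence (1) inside Proposition~\ref{4properties}---so you are no worse off than the paper; but if you want a fully self-contained direct argument, this is the step that requires additional justification (e.g.\ via the fact that the presheaves in question carry $\Cor$-transfers).
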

\begin{proof}
	By Lemma \ref{4properties} we know that $\mathcal{X}$ is special if and only if $\Lambda(\mathcal{X})$ lies in $DM_{\Cor}[\Sm]$.
	By \cite[Proposition 4.13]{bonart2022paper1} this is the case if and only if $\Lambda(\mathcal{X})$ is strictly $\sim$-local in the sense of \cite[Definition 4.3]{bonart2022paper1},
	and this is the case if and only if $\mathcal{X}$ is strictly $\sim$-local in the sense of Definition \ref{defstrictlylocal}.
\end{proof}

\begin{defs}
	Given a class of morphisms $S$ in $[\Sm,\Delta^{op}\ShvA]$, we write $\widehat{S}$ for the class of morphisms
	$$\widehat{S} := \{s \otimes Z \mid s \in S, Z \in \Smk \}.$$
	
	We define the \textit{enriched motivic model structure} on $[\Sm,\Delta^{op}\ShvA]$ to be the left Bousfield localization of the local model structure on $[\Sm,\Delta^{op}\ShvA]$ with respect to the class of morphisms $\widehat{\sim}$. This model category will be denoted by $[\Sm,\Delta^{op}\ShvA]_{\mot}$.
\end{defs}

\begin{lem}
	Let $S$ be a class of morphisms in $[\Sm,\Delta^{op}\ShvA]$ with cofibrant domains and codomains. Then an object $F \in [\Sm,\Delta^{op}\ShvA]$ is strictly $S$-local if and only if its local fibrant replacement $F^f$ is $\widehat{S}$-local in the usual model category theoretic sense of \cite[Definition 3.1.4]{hirschhorn2003model}.
\end{lem}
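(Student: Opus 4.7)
My plan is to translate the $\widehat S$-local condition into a sectionwise equivalence of simplicial mapping sheaves, and then compare with strict $S$-locality using that, between locally fibrant simplicial sheaves in $\Delta^{op}\ShvA$, local equivalences and sectionwise weak equivalences coincide.

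First I would unpack $\widehat S$-locality. For $s : A \to B$ in $S$ and $Z \in \Smk$, the object $B \otimes Z$ is cofibrant in the local model structure on $[\Sm,\Delta^{op}\ShvA]$: $B$ is cofibrant by hypothesis and $Z$ contributes the cofibrant representable $\Sm(Z,-)$, and Day tensor with a cofibrant object preserves cofibrancy by the monoidal structure cited above. Because $F^f$ is locally fibrant, the derived simplicial mapping space is computed strictly, with $n$-simplices
$$\mathrm{Hom}_{[\Sm,\Delta^{op}\ShvA]}(B \otimes Z \otimes \Delta[n], F^f),$$
which by the very definition of $\mathrm{map}^{\Delta^{op}\ShvA}$ equals $\mathrm{map}^{\Delta^{op}\ShvA}(B, F^f)(Z)_n$. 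Hence $F^f$ is $\widehat S$-local (in the sense of \cite[Definition 3.1.4]{hirschhorn2003model}) precisely when, for every $s \in S$ and every $Z \in \Smk$, the map
$$s^*(Z) : \mathrm{map}^{\Delta^{op}\ShvA}(B, F^f)(Z) \to \mathrm{map}^{\Delta^{op}\ShvA}(A, F^f)(Z)$$
is a weak equivalence of simplicial sets; equivalently, $s^*$ is sectionwise a weak equivalence of simplicial sheaves.

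Next I would observe that both $\mathrm{map}^{\Delta^{op}\ShvA}(B, F^f)$ and $\mathrm{map}^{\Delta^{op}\ShvA}(A, F^f)$ are locally fibrant in $\Delta^{op}\ShvA$. This follows from the fact that the local model structure on $[\Sm,\Delta^{op}\ShvA]$ is a $\Delta^{op}\ShvA$-enriched model category (combining Proposition \ref{propmodelcategory} with \cite[Theorem 4.4]{dundas2003enriched}): the $\Delta^{op}\ShvA$-valued enriched hom from a cofibrant object to a fibrant one is fibrant in the enriching category by the pushout-product axiom.

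The proof then reduces to the assertion that, between locally fibrant objects of $\Delta^{op}\ShvA$, a morphism is a local (stalkwise) equivalence if and only if it is a sectionwise weak equivalence. This is the main obstacle and relies on Nisnevich descent: locally fibrant simplicial sheaves recover their values at every $Z$ as a suitable homotopy limit of their stalks, so stalkwise and sectionwise weak equivalences agree on such objects. Granted this, the strict $S$-local condition (each $s^*$ is a local equivalence) and the $\widehat S$-local condition (each $s^*$ is sectionwise a weak equivalence) describe exactly the same property of $F^f$, completing the proof.
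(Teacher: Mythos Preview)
Your proposal is correct and follows essentially the same route as the paper's proof: unwind $\widehat S$-locality of $F^f$ as a sectionwise condition on the simplicial sheaf $\mathrm{map}^{\Delta^{op}\ShvA}(-,F^f)$, observe that source and target of $s^*$ are locally fibrant because $A,B$ are cofibrant and $F^f$ is fibrant, and then use that local and sectionwise weak equivalences agree between locally fibrant objects. The only cosmetic difference is that the paper first replaces each $s\otimes Z$ by a cofibration between cofibrant objects and invokes Ken Brown's lemma to pass back to $s\otimes Z$, whereas you appeal directly to the fact that the strict simplicial mapping space models the homotopy function complex when the source is cofibrant and the target fibrant; both are valid ways to handle the same point. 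Your descent heuristic for the final step is fine but can be stated more simply: since each $\Cor(-,Z)_{\nis}$ is cofibrant and corepresents sections at $Z$, any local weak equivalence between locally fibrant objects induces a weak equivalence on sections.
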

\begin{proof}
	Let $F^f$ be a pointwise locally fibrant replacement of $F$.
	For every $s: A \rightarrow B, s \in \widehat{S}$ let $s^c : A^c \rightarrow B^c$ be a cofibrant replacement of $s$. This means we have a commutative square
	$$\xymatrix{A^c \ar[d] \ar[r]^{s^c} & B^c  \ar[d] \\
		A \ar[r]^s & B}$$
	such that the vertical maps are trivial fibrations, $A^c$ and $B^c$ are cofibrant and $s^c$ is a cofibration.
	
	Note that for every $s \in \widehat{S}$ the domain $A$ and codomain $B$ are already cofibrant, but $s$ is not neccessarily a cofibration.
	
	For $X, Y \in [\Sm,\Delta^{op}\ShvA]$ let $\mathrm{map}^{\Delta^{op}\Set}(X,Y) \in \Delta^{op}\Set$ denote the non-derived simplicial mapping space. It can be defined by
	$$\mathrm{map}^{\Delta^{op}\Sets}(X,Y) := \inthom{[\Sm,\Delta^{op}\ShvA]}{X}{Y}(pt)(pt).$$
	
	Now $F^f$ is $\widehat{S}$-local in the usual model category theoretic sense if and only if for every $s \in \widehat{S}$ the map
	$$s^{c,*} : \mathrm{map}^{\Delta^{op}\Sets}(B^c,F^f) \rightarrow \mathrm{map}^{\Delta^{op}\Sets}(A^c,F^f)$$
	is a weak equivalence.
	
	We have a commutative square
	$$\xymatrix{  \mathrm{map}^{\Delta^{op}\Set}(B,F^f) \ar[r]^{s^*} \ar[d] &   \mathrm{map}^{\Delta^{op}\Set}(A,F^f) \ar[d] \\ \mathrm{map}^{\Delta^{op}\Set}(B^c,F^f) \ar[r]^{s^{c,*}} &   \mathrm{map}^{\Delta^{op}\Set}(A^c,F^f)}$$
	
	Since the functor $\mathrm{map}^{\Delta^{op}\Sets}(-,F^f)$ sends trivial cofibrations to trivial fibrations, it follows by Ken Brown's lemma \cite[Lemma 1.1.12]{hovey2007model}, that  $\mathrm{map}^{\Delta^{op}\Sets}(-,F^f)$ sends weak equivalences between cofibrant objects to weak equivalences.
	Since the maps $A^c \rightarrow A$ and $B^c \rightarrow B$ are weak equivalences between cofibrant objects, it follows that the vertical maps in the above commutative diagram are weak equivalences.
	Therefore $F^f$ is $\widehat{S}$-local if and only if for every $s \in \widehat{S}$ the map
	$$s^* : \mathrm{map}^{\Delta^{op}\Sets}(B,F^f) \rightarrow \mathrm{map}^{\Delta^{op}\Sets}(A,F^f)$$
	is a weak equivalence.
	Every $s \in \widehat{S}$ is of the form $t \otimes Z$ for some $Z \in \Smk$ and $t : C \rightarrow D$ with $t \in S$.
	We have a commutative diagram in which the vertical maps are isomorphisms:
	$$\xymatrix{ \mathrm{map}^{\Delta^{op}\Sets}(D \otimes Z,F^f) \ar[r]^{(t \otimes Z)^*} \ar[d]_\sim  & \mathrm{map}^{\Delta^{op}\Sets}(C \otimes Z,F^f) \ar[d]^\sim \\
		\mathrm{map}^{\Delta^{op}\ShvA}(D,F^f)(Z) \ar[r]^{t^*} &  	\mathrm{map}^{\Delta^{op}\ShvA}(C,F^f)(Z)} $$
	So $F^f$ is $\widehat{S}$-local if and only if for every $t : C \rightarrow D, t \in S$ the map
	$$t^* : \mathrm{map}^{\Delta^{op}\ShvA}(D,F^f) \rightarrow \mathrm{map}^{\Delta^{op}\ShvA}(C,F^f)$$
	is a sectionwise weak equivalence in $\Delta^{op}\ShvA$.
	Since $C$, $D$ are cofibrant and $F^f$ is locally fibrant, the domain and codomain of $t^*$ are fibrant. So $t^*$ is a sectionwise weak equivalence if and only if it is a local weak equivalence.
	Therefore $F^f$ is $\widehat{S}$-local if and only if $F$ is strictly $S$-local.
\end{proof}

So the fibrant objects of $[\Sm,\Delta^{op}\ShvA]_{\mot}$ are the pointwise locally fibrant special enriched motivic $\Cor$-spaces.

\begin{dfn}
	Let $\cc D([\Sm,\Delta^{op}\ShvA])$ be the homotopy category of $[\Sm,\Delta^{op}\ShvA]$ with respect to the pointwise local model structure.
	Define $\mathrm{Spc}_{\Cor}[\Sm]$ as the full subcategory of $\cc D([\Sm,\Delta^{op}\ShvA])$ consisting of special enriched motivic $\Cor$-spaces.
\end{dfn}

We document above lemmas as follows.

\begin{thm}\label{corSpcAembedding}
	 The category $\mathrm{Spc}_{\Cor}[\Sm]$ is equivalent to the homotopy category of the model category $[\Sm,\Delta^{op}\ShvA]_{\mot}$.
The fibrant objects of $[\Sm,\Delta^{op}\ShvA]_{\mot}$ are the pointwise locally fibrant special enriched motivic $\Cor$-spaces.
\end{thm}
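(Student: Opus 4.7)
The statement is essentially a synthesis of the two preceding lemmas combined with the general theory of left Bousfield localization, so my plan is to assemble these ingredients carefully rather than prove anything substantially new.

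First I would verify that the left Bousfield localization $[\Sm,\Delta^{op}\ShvA]_{\mot}$ actually exists. The local model structure on $[\Sm,\Delta^{op}\ShvA]$ has already been established to be cellular and left proper (this is noted at the beginning of Section~\ref{motmodel}, citing \cite[Theorem 4.4]{dundas2003enriched} and \cite[Corollary 4.8]{dundas2003enriched}). Moreover, the class $\widehat{\sim} = \widehat{\bb A^1_1 + \tau + \bb A^1_2 + Nis}$ is a set, since each of $\bb A^1_1$, $\tau$, $\bb A^1_2$, $Nis$ is indexed by objects (or pairs of objects and Nisnevich squares) in $\Smk$, and tensoring with $Z \in \Smk$ is again set-indexed. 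Hirschhorn's existence theorem for left Bousfield localizations~\cite[Theorem 4.1.1]{hirschhorn2003model} then applies.

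Next I would identify the fibrant objects. In any left Bousfield localization at a set $\widehat{\sim}$, the fibrant objects are exactly those objects which are fibrant in the underlying model structure and which are $\widehat{\sim}$-local in the classical sense of \cite[Definition 3.1.4]{hirschhorn2003model}. By the second lemma in Section~\ref{motmodel}, an object $F$ (with pointwise locally fibrant replacement $F^f$) has $F^f$ being $\widehat{\sim}$-local if and only if $F$ is strictly $\sim$-local. By the first lemma in the section, strict $\sim$-locality for an enriched motivic $\Cor$-space is equivalent to being special. Applying this with $F$ already pointwise locally fibrant (so $F^f = F$ up to local equivalence), I conclude that the fibrant objects of $[\Sm,\Delta^{op}\ShvA]_{\mot}$ are precisely the pointwise locally fibrant special enriched motivic $\Cor$-spaces.

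Finally, I would deduce the equivalence of homotopy categories. A standard fact about left Bousfield localizations (see \cite[Theorem 3.2.13, Proposition 3.4.20]{hirschhorn2003model}) is that the homotopy category $\mathrm{Ho}([\Sm,\Delta^{op}\ShvA]_{\mot})$ is equivalent, via the identity functor on underlying categories, to the full subcategory of $\cc D([\Sm,\Delta^{op}\ShvA]) = \mathrm{Ho}([\Sm,\Delta^{op}\ShvA]_{\nis})$ spanned by the $\widehat{\sim}$-local objects. Combining with the previous step, this full subcategory is exactly the full subcategory on the special enriched motivic $\Cor$-spaces, which by definition is $\mathrm{Spc}_{\Cor}[\Sm]$.

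The main obstacle, if any, is purely bookkeeping: one must be careful that the notion of "strictly $\sim$-local" from Definition~\ref{defstrictlylocal} lines up with the classical left Bousfield notion of $\widehat{\sim}$-locality. This is precisely the content of the second lemma of the section, whose proof uses cofibrant replacements of the maps in $\widehat{\sim}$ and Ken Brown's lemma. Aside from this compatibility issue, every other step is formal. I therefore expect the write-up to be a short paragraph that just quotes the two lemmas and the general localization machinery, as the authors themselves signal by the phrase \emph{We document above lemmas as follows}.
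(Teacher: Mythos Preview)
Your proposal is correct and takes essentially the same approach as the paper, which presents the theorem as a direct summary of the two preceding lemmas (the equivalence of special with strictly $\sim$-local, and the equivalence of strictly $\sim$-local with $\widehat{\sim}$-local in the classical sense) together with the standard Bousfield localization machinery. If anything, you are more explicit than the paper about the existence of the localization and the identification of the homotopy category, but nothing in your outline departs from the intended argument.
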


The preceding theorem is also reminiscent of Bousfield--Friedlander's theorem~\cite{BFgamma} stating that
fibrant objects in the model category of classical $\Gamma$-spaces are given by very special $\Gamma$-spaces.

\section{Reconstructing $DM_{\Cor, \geq 0}^{\eff}$}

\begin{defs}\label{Gmmotivedef}
	For $U \in \Smk$ define $\motive{U} \in DM_{\Cor}$ by
	$$\motive{U} := (M_{\Cor}(U \times \Gmn{n} ))_{n \geq 0} ,$$
	where $M_{\Cor}(X) := C_*\Cor(-,X)_{\nis}$ is the $\Cor$-motive of $X$.
	We call $\motive{U}$ the \textit{big $\Cor$-motive} of $U$.
\end{defs}

Let $\mathcal{U} : DM_{\Cor} \rightarrow \SH(k)$ be the forgetful functor, and let $\mathcal{L} : \SH(k) \rightarrow DM_{\Cor}$ be its left adjoint.
\begin{lem}\label{motivecomparison}
	The natural morphism $$\mathcal{L} (\Sigma^{\infty}_{S^1,\Gm} U_+) \rightarrow \motive{U} $$ is an isomorphism in $DM_{\Cor}$.
\end{lem}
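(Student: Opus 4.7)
The plan is to unwind both sides levelwise and identify them using the symmetric monoidal structure of $\mathcal{L}$. First, I would recall that $\mathcal{L}:\SH(k)\to DM_{\Cor}$ is the total left derived functor of the levelwise $\Cor$-linearization: on an unstable smooth scheme $U\in\Smk$ it sends the pointed motivic space $U_+$ to the $\Cor$-representable $\Cor(-,U)_{\nis}$, and since it is the left adjoint of the forgetful functor between closed symmetric monoidal stable homotopy categories, it is strong symmetric monoidal (on the level of homotopy categories) and carries the $(S^1,\Gm)$-suspension spectrum functor into the corresponding one on $\Cor$-bispectra.

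Next, I would describe the bispectrum $\mathcal{L}(\Sigma^\infty_{S^1,\Gm}U_+)$ explicitly. By monoidality we have in the $\Gm$-direction
\[
\mathcal{L}(\Sigma^\infty_{S^1,\Gm}U_+)_n \;\cong\; M_{\Cor}(U)\otimes \Gmn{n}
\]
in $D([\Sm,\ShvA])$, with the structure maps induced by the $\Gm$-bonding maps of the suspension spectrum. On the other hand, the definition of $\motive{U}$ gives at level $n$ the motive $M_{\Cor}(U\times\Gmn{n})$, and by the Künneth-type identification
\[
M_{\Cor}(U\times\Gmn{n}) \;\cong\; M_{\Cor}(U)\otimes\Gmn{n}
\]
in $D(\ShvA)$ --- which follows from the fact that $\Cor(-,U\times W)_{\nis}\cong\Cor(-,U)_{\nis}\otimes\Cor(-,W)_{\nis}$ for $U,W\in\Smk$ together with the definition of $\Gmn{n}$ as a direct summand of $\Cor(-,\Gm^{\times n})_{\nis}$ --- we get a natural isomorphism with the levels of $\mathcal{L}(\Sigma^\infty_{S^1,\Gm}U_+)$.

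Finally, I would verify that these levelwise identifications are compatible with the structure maps, so they assemble to an isomorphism of bispectra in $DM_{\Cor}$. The natural map in the statement is the unit-counit map induced by the adjunction $\mathcal{L}\dashv\mathcal{U}$, and checking that it coincides with the isomorphism just constructed is formal: it suffices to check it at level zero, where it reduces to the tautology $\mathcal{L}(U_+)\cong M_{\Cor}(U)$.

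The main obstacle will be making the compatibility of structure maps rigorous in the big-motive setting. In particular, one must ensure that the $\Gm$-bonding maps of $\motive{U}$ (coming from the evaluation $\Sm(U\times\Gmn{n+1},-)\otimes\Gmn{1}\to\Sm(U\times\Gmn{n},-)$ unit-type transposition) really correspond, under $\mathcal{L}$, to the standard bonding maps of $\Sigma^\infty_{S^1,\Gm}U_+$. This compatibility is essentially a consequence of the monoidality of $\mathcal{L}$ and the associativity/coherence of the identification $\Cor(-,U\times W)_{\nis}\cong\Cor(-,U)_{\nis}\otimes\Cor(-,W)_{\nis}$, but writing it out carefully is the bulk of the work.
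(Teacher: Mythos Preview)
Your approach works in outline but is more elaborate than needed, and it contains a slip. The functor $\mathcal{L}$ is the derived levelwise $\Cor$-linearization; at weight $n$ it produces $\Cor(-,U)_{\nis}\otimes\Gmn{n}\cong\Cor(-,U\times\Gmn{n})_{\nis}$, not $M_{\Cor}(U)\otimes\Gmn{n}$. The Suslin complex $C_*$ is not part of $\mathcal{L}$. Once you make this correction, the K\"unneth step and the monoidality of $\mathcal{L}$ become superfluous.

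The paper's argument is a single observation: the natural morphism of the lemma is already a map of $\Gm$-spectra, and at weight $n$ it is literally the canonical map
\[
\Cor(-,U\times\Gmn{n})_{\nis}\longrightarrow C_*\Cor(-,U\times\Gmn{n})_{\nis}=M_{\Cor}(U\times\Gmn{n}),
\]
i.e.\ the inclusion of the constant simplicial object into its Suslin complex. This is a motivic equivalence, so the map is a levelwise motivic equivalence of $\Gm$-spectra and hence an isomorphism in $DM_{\Cor}$. Because you are analyzing a \emph{given} morphism of spectra rather than assembling levelwise isomorphisms by hand, there is no separate compatibility-of-structure-maps check to perform; that ``bulk of the work'' in your proposal simply disappears. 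Your route would eventually reach the same conclusion, but it replaces a one-line identification with a monoidality-and-coherence argument that is not needed here.
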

\begin{proof}
	In weight $n$ this morphism is the motivic equivalence
	$$\Cor(-,U)_{\nis} \rightarrow C_*\Cor(-,U)_{\nis} = M_{\Cor}(U) .$$
	So the map $$\mathcal{L} (\Sigma^{\infty}_{S^1,\Gm} U_+) \rightarrow \motive{U}$$ is a levelwise motivic equivalence, and therefore an isomorphism in $DM_{\Cor}$.
\end{proof}

Let $DM_{\Cor, \geq 0}$ be the full subcategory of $DM_{\Cor}$ consisting of those $\Gm$-spectra of chain complexes which are connective chain complexes in each weight. 
Note that by construction, for every $U \in \Smk$ we have $\motive{U} \in DM_{\Cor, \geq 0}$.

\begin{thm} \label{geqtheorem}
	The naive $\Gm$-evaluation functor is an equivalence of categories
	$$ev_{\Gm} :\mathrm{Spc}_{\Cor}[\Sm] \rightarrow DM_{\Cor, \geq 0} .$$
\end{thm}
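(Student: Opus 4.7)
The plan is to deduce this from the main embedding theorem of \cite{bonart2022paper1} combined with the Dold--Kan correspondence, restricted to the connective parts on both sides.

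First I would factor the functor $ev_{\Gm}$ through the enriched normalized chain complex functor $\Lambda$ of Definition \ref{LambdaDef}. By Proposition \ref{4properties}, $\Lambda$ sends a special enriched motivic $\Cor$-space $\mathcal{X}$ to an object of $DM_{\Cor}[\Sm]$, and since $\Lambda$ is a Dold--Kan normalization it lands in objectwise-connective functors. Because local equivalences in $\Delta^{op}\ShvA$ correspond precisely to local quasi-isomorphisms in $\Ch_{\geq 0}(\ShvA)$ under the Dold--Kan equivalence constructed in Section~\ref{sectionmodelstructures}, the induced functor
\[
\Lambda : \mathrm{Spc}_{\Cor}[\Sm] \longrightarrow DM_{\Cor,\geq 0}[\Sm]
\]
(where the target denotes the full subcategory of $DM_{\Cor}[\Sm]$ on objectwise-connective functors) is an equivalence of categories with pseudo-inverse induced by the inverse Dold--Kan functor $DK$.

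Next I would invoke the main embedding theorem of \cite{bonart2022paper1}, which states that the $\Gm$-evaluation functor induces an equivalence $DM_{\Cor}[\Sm] \simeq DM_{\Cor}$, and check that this equivalence restricts to an equivalence between the connective subcategories $DM_{\Cor,\geq 0}[\Sm] \simeq DM_{\Cor,\geq 0}$. One direction is tautological: if a functor $F \in DM_{\Cor}[\Sm]$ is objectwise connective then $ev_{\Gm}(F) = (F(\Gmn{n}))_{n\geq 0}$ is connective in each weight. Composing the two equivalences then yields
\[
\mathrm{Spc}_{\Cor}[\Sm] \xrightarrow{\;\Lambda\;} DM_{\Cor,\geq 0}[\Sm] \xrightarrow{\;ev_{\Gm}\;} DM_{\Cor,\geq 0},
\]
and a direct check on representables (using Lemma~\ref{motivecomparison} to match the image of $\Sm(U,-)$ with $\motive{U}$) identifies this composite with the $\Gm$-evaluation functor of the statement.

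The main obstacle is verifying that the quasi-inverse $\Psi$ of $ev_{\Gm} : DM_{\Cor}[\Sm] \to DM_{\Cor}$ constructed in \cite{bonart2022paper1} preserves connectivity, i.e.\ that $E \in DM_{\Cor,\geq 0}$ implies $\Psi(E)(U)$ is a connective complex for every $U \in \Smk$. The quasi-inverse is built by an enriched left Kan extension along the functor $\Gmn{\bullet} : \mathbb{N} \to \Sm$ combined with a cancellation-theorem argument, so one must show that this Kan extension preserves the heart of the natural $t$-structure on each value. The key input is that the cancellation property of $\Cor$ and Proposition~\ref{4properties}(2) ensure the connective truncation is compatible with $\Gm$-looping, so the bonding maps in $\Psi(E)(U)$ remain between connective complexes and the connectivity is transmitted from weights of $E$ to values of $\Psi(E)$ without loss. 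Once this connectivity preservation is established, the restricted equivalence $DM_{\Cor,\geq 0}[\Sm] \simeq DM_{\Cor,\geq 0}$ follows formally and the theorem is proved.
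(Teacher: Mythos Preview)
Your overall decomposition matches the paper's: reduce via the Dold--Kan equivalence $\Lambda$ to the full subcategory $DM_{\Cor}[\Sm]_{\geq 0}$ of objectwise-connective functors, and then restrict the known equivalence $ev_{\Gm}: DM_{\Cor}[\Sm] \simeq DM_{\Cor}$ from \cite{bonart2022paper1} to connective objects on both sides. You also correctly identify the only nontrivial point: why the restricted functor $DM_{\Cor}[\Sm]_{\geq 0} \to DM_{\Cor,\geq 0}$ is essentially surjective (equivalently, why the quasi-inverse preserves connectivity).

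However, your proposed argument for this step is not an argument. You claim that the quasi-inverse $\Psi$ is an enriched left Kan extension along $\Gmn{\bullet}$ and that ``cancellation ensures the connective truncation is compatible with $\Gm$-looping,'' but neither assertion is justified, and the second is close to circular: Proposition~\ref{4properties}(2) concerns the cancellation axiom for objects already in $\mathrm{Spc}_{\Cor}[\Sm]$, not the behaviour of $\Psi$ on an arbitrary $E \in DM_{\Cor,\geq 0}$. Nothing you have written explains why, for a general $E$, the values $\Psi(E)(U)$ have vanishing negative homology.

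The paper handles this step differently and more concretely. Rather than analysing $\Psi$, it argues directly: given $F \in DM_{\Cor,\geq 0}$, pick any preimage $G \in DM_{\Cor}[\Sm]$ and show that the good truncation map $\tau_{\geq 0}(G) \to G$ is already an isomorphism in $D([\Sm,\ShvA])$. The key computation is that for each $U \in \Smk$ one has, by the R\"ondigs--{\O}stv{\ae}r type theorem \cite[Theorem 7.1]{bonart2022paper1},
\[
G(U) \cong ev_{\Gm}(G(U \times -))(0) \cong \bigl(ev_{\Gm}(G) \wedge \motive{U}\bigr)(0) \cong \bigl(F \wedge \motive{U}\bigr)(0),
\]
and since $DM_{\Cor,\geq 0}$ is closed under the smash product, $F \wedge \motive{U}$ is levelwise connective, so $G(U)$ has no negative homology. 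This is the substantive input you are missing; the vague appeal to cancellation does not replace it.
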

\begin{proof}
	Since the exponential characteristic $p$ of $k$ is invertible in $\Cor$, it follows from \cite[Theorem 4.14]{bonart2022paper1}
	that the naive $\Gm$-evaluation functor is an equivalence of categories
	$$ev_{\Gm}: DM_{\Cor}[\Sm] \rightarrow DM_{\Cor} .$$
	Here $DM_{\Cor}[\Sm]$ consists of those enriched functors $F : \Sm \rightarrow \Ch(\ShvA)$ which satisfy
	contravariant $\bb A^1$-invariance, cancellation,  covariant $\bb A^1$-invariance and Nisnevich excision  (see \cite[Section 4]{bonart2022paper1} for details).
	
	Let $DM_{\Cor}[\Sm]_{\geq 0}$ be the full subcategory of $DM_{\Cor}[\Sm]$ on those functors $F : \Sm \rightarrow \Ch(\ShvA)$ which factor over $\Ch_{\geq 0}(\ShvA)$.
	The equivalence $ev_{\Gm}$ restricts to a fully faithful functor on connective chain complexes
	$$ev_{\Gm, \geq 0}: DM_{\Cor}[\Sm]_{\geq 0} \rightarrow DM_{\Cor, \geq 0} .$$
	The functor $ev_{\Gm} : \mathrm{Spc}_{\Cor}[\Sm] \rightarrow DM_{\Cor, \geq 0}$ of the theorem will factor through $ev_{\Gm, \geq 0}$. 
	We claim that this restricted $\Gm$-evaluation functor $ev_{\Gm, \geq 0}$ is an equivalence. Since it is fully faithful we only need to show essential surjectivity.
	
	Take $F \in DM_{\Cor, \geq 0}$. Since $ev_{\Gm}$ is essentially surjective on non-connective chain complexes, there exists $G \in DM_{\Cor}[\Sm]$ such that $ev_{\Gm}(G) \cong F$.
	Let $$\tau_{\geq 0} : \Ch([\Sm,\ShvA]) \rightarrow \Ch_{\geq 0}([\Sm,\ShvA])$$ be the good truncation functor for chain complexes of the Grothendieck category of enriched functors $[\Sm,\ShvA]$. Also denote by $\tau_{\geq 0} : \SpGm(\Ch(\ShvA)) \rightarrow \SpGm(\Ch_{\geq 0}(\ShvA))$ the good truncation functor of $\Ch(\ShvA)$ applied in each weight.
	
	Consider the commutative diagram
	$$\xymatrix{
		ev_{\Gm}(\tau_{\geq 0}(G))  \ar[d]\ar[r] & \tau_{\geq 0}(F) \ar[d]^\sim \\ ev_{\Gm}(G) \ar[r]^\sim & F }$$
	We know that the bottom horizontal map and the right vertical map are isomorphisms in $DM_{\Cor}$.
	We claim that $\tau_{\geq 0}(G) \rightarrow G$ is an isomorphism in $D([\Sm,\ShvA])$. For this it suffices to show that for every $U \in \Smk$ the negative homology sheaves of $G(U)$ are zero.
	
	We have a chain of isomorphisms in $D(\ShvA)$ $$G(U) \cong G(U\times pt) = ev_{\Gm}(G(U \times -))(0)$$

	By  \cite[Theorem 7.1]{bonart2022paper1} we have  isomorphisms in $DM_{\Cor}$
	$$ev_{\Gm}(G(U \times -)) \cong ev_{\Gm}(G) \wedge \motive{U}  \cong F \wedge   \motive{U} .$$
	Since  $DM_{\Cor,\geq 0}$ is closed under the smash product of $DM_{\Cor}$, we have that  $F \wedge   \motive{U} \in DM_{\Cor,\geq 0}$.
	
	Therefore $G(U) = ev_{\Gm}(G(U \times -))(0)$ has vanishing negative homology sheaves. 
	So $\tau_{\geq 0}(G) \rightarrow G$ is an isomorphism in $D([\Sm,\ShvA])$, and then it follows that the composite map
	$$ev_{\Gm}(\tau_{\geq 0}(G)) \rightarrow ev_{\Gm}(G) \rightarrow F$$ is an isomorphism in $DM_{\Cor}$.
	So 
	$$ev_{\Gm, \geq 0}: DM_{\Cor}[\Sm]_{\geq 0} \rightarrow DM_{\Cor, \geq 0} $$ is essentially surjective, and hence an equivalence.
	
	Let $\cc D([\Sm,\Ch_{\geq 0}(\ShvA)])$ be the homotopy category of $[\Sm,\Ch_{\geq 0}(\ShvA)]$ with respect to the local model structure.
	The Dold-Kan correspondence induces an equivalence of categories
	$$\Lambda: \cc D([\Sm,\Delta^{op}\ShvA]) \rightarrow \cc D([\Sm,\Ch_{\geq 0}(\ShvA)]) .$$
	From Proposition  \ref{4properties} it now follows that
	we have a commutative diagram
	$$\xymatrix{ \cc D([\Sm,\Delta^{op}\ShvA]) \ar[r]^(.45){\Lambda} & \cc D([\Sm,\Ch_{\geq 0}(\ShvA)]) \\
		\mathrm{Spc}_{\Cor}[\Sm] \ar[u] \ar[r] & DM_{\Cor}[\Sm]_{\geq 0 } \ar[u] } $$
	where the vertical maps are the inclusion maps.
	Proposition  \ref{4properties} implies that the bottom horizontal arrow is essentially surjective.
	Since the the vertical maps and the top horizontal map are also fully faithful, we know that the bottom horizontal map is fully faithful, so it is an equivalence of categories.
	So we get an equivalence of categories $$ev_{\Gm} : \mathrm{Spc}_{\Cor}[\Sm] \rightarrow DM_{\Cor, \geq 0} $$ as was to be shown.
\end{proof}

From now on assume that $\Cor$ has framed correspondences in the sense of Definition \ref{framedCorDef}.

\begin{prop}\label{effcomparsion}
	Let $\mathcal{X}$ be a special enriched motivic $\Cor$-space.
	Let $ev_{S^1,\Gm}(\mathcal{X}) \in \SH(k)_{\nis}^{fr}$ be its associated framed bispectrum, as in Definition \ref{associatedbispectrum}.
	Then $ev_{S^1,\Gm}(\mathcal{X})$ is effective, in the sense of \cite[Definition 3.5]{garkusha2018triangulated} if and only if $\mathcal{X}$ is very effective, in the sense of Definition \ref{suslindef}.
\end{prop}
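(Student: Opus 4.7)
The plan is to reduce the claim to the analogous comparison between very effective framed motivic $\Gamma$-spaces and effective framed bispectra, which is already known from the framed-motives literature cited in the paper. The bridge is the framed motivic $\Gamma$-space $\EM^{fr}(\mathcal{X})$ constructed in Proposition \ref{framedgamma}, together with the identification of the associated framed bispectrum given in Definition \ref{associatedbispectrum}.

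First I would record that by Definition \ref{associatedbispectrum} there is a canonical identification
\[
ev_{S^1,\Gm}(\mathcal{X}) \;=\; ev_{S^1,\Gm}(\EM^{fr}(\mathcal{X}))
\]
as framed bispectra in the sense of \cite[Definition 2.1]{garkusha2018triangulated}; this uses the standing assumption that $\Cor$ has framed correspondences in the sense of Definition \ref{framedCorDef}, together with \cite[Section 2.7]{garkusha2019framed}. Next I would invoke Lemma \ref{gammaeff} to translate the very effectiveness of $\mathcal{X}$ from Definition \ref{suslindef} into the very effectiveness of $\EM^{fr}(\mathcal{X})$ in the sense of \cite[Axioms 1.1]{garkusha2019framed}: both conditions amount to the statement that the diagonal of $\mathcal{X}(\Gmn{1} \times U)(\semilocalsimplex)$ is contractible for every $U \in \Smk$ and every finitely generated field extension $K/k$, the only difference being a change of ambient category which does not affect the condition.

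The remaining step is then purely a restatement of the known comparison, in Garkusha--Panin--Neshitov's framed-motives machinery, between (very) effective framed motivic $\Gamma$-spaces and effective framed bispectra: the $(S^1,\Gm)$-evaluation functor sends very effective framed motivic $\Gamma$-spaces to effective framed bispectra, and this equivalence is reversible because $ev_{S^1,\Gm}(\EM^{fr}(\mathcal{X}))$ in weight zero recovers (up to motivic equivalence) the framed motive values of $\EM^{fr}(\mathcal{X})$ on $\Gmn{1} \times U$ over the semilocal simplex. Chaining these two equivalences with the identification from the first step gives the desired biconditional.

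The main potential obstacle is not conceptual but bookkeeping: one has to check that the Suslin-type contractibility condition encoded in Definition \ref{suslindef} agrees, on the level of bisimplicial abelian groups, with the contractibility of the framed motive $\mathcal{M}_{fr}(\EM^{fr}(\mathcal{X}))(\Gmn{1} \times U)(\semilocalsimplex)$ used to test effectiveness of a framed bispectrum in \cite[Definition 3.5]{garkusha2018triangulated}. This is handled exactly as in the proof of Lemma \ref{gammaeff}: the extension $\EM^{fr}(\mathcal{X})$ is defined levelwise as a finite direct sum of copies of $\mathcal{X}$, so the diagonal of the relevant bisimplicial object computing the framed motive coincides, in the additive setting, with the diagonal appearing in Definition \ref{suslindef}, and contractibility of one is contractibility of the other.
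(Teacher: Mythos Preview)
Your forward direction is fine and matches the paper: very effectiveness of $\mathcal{X}$ passes through Lemma \ref{gammaeff} to very effectiveness of $\EM^{fr}(\mathcal{X})$, and then to effectiveness of the evaluated bispectrum.

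The backward direction, however, has a genuine gap. Effectiveness of the framed bispectrum $ev_{S^1,\Gm}(\mathcal{X})$ in the sense of \cite[Definition 3.5]{garkusha2018triangulated} is a condition on the levels of that bispectrum, which are the objects $\mathcal{X}(\Gmn{n})$ for $n\geq 0$; it tells you that the diagonals of $\mathcal{X}(\Gmn{n})(\semilocalsimplex)$ are contractible for $n\geq 1$. Very effectiveness of $\mathcal{X}$ in Definition \ref{suslindef}, by contrast, demands contractibility of $\mathcal{X}(\Gmn{1}\times U)(\semilocalsimplex)$ for \emph{every} $U\in\Smk$. The bispectrum $ev_{S^1,\Gm}(\mathcal{X})$ does not see an arbitrary $U$ in any of its levels, so your sentence ``$ev_{S^1,\Gm}(\EM^{fr}(\mathcal{X}))$ in weight zero recovers \ldots\ the framed motive values of $\EM^{fr}(\mathcal{X})$ on $\Gmn{1}\times U$'' is not correct, and no purely bookkeeping identification will produce the missing $U$.

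The paper closes this gap with the R\"ondigs--{\O}stv{\ae}r-type Theorem \ref{ROLemma}: for each $U$ one passes to the shifted $\Cor$-space $\mathcal{X}(U\times -)$, identifies its evaluation bispectrum with $ev_{S^1,\Gm}(\mathcal{X})\wedge\Sigma^{\infty}_{S^1,\Gm}U_+$ in $SH(k)[1/p]$, observes that a smash of very effective objects is very effective, and then uses Lemma \ref{invertplemma} and \cite[Lemma 3.2]{garkusha2018triangulated} to extract the required contractibility of $\mathcal{X}(\Gmn{1}\times U)(\semilocalsimplex)$. This step is the actual content of the converse and cannot be absorbed into a citation to the framed-$\Gamma$-space literature; you need Theorem \ref{ROLemma} or an equivalent input.
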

\begin{proof}
	Suppose that $\mathcal{X}$ is very effective.
	By Lemma \ref{gammaeff} the enriched motivic $\Cor$-space $\mathcal{X}$ is very effective if and only if the associated framed motivic $\Gamma$-space $\EM(\mathcal{X})$ is very effective.
	If $\EM(\mathcal{X})$ is very effective, then this clearly implies that the framed bispectrum $ev_{S^1,\Gm}(\mathcal{X})$, from Definition \ref{associatedbispectrum}, is very effective in the sense of \cite[Definition 3.5]{garkusha2018triangulated}.
	
	Now let us prove the other direction.
	Assume that $ev_{S^1,\Gm}(\mathcal{X})$ is very effective in the sense of \cite[Definition 3.5]{garkusha2018triangulated}.
	Then for every $n > 0$ the diagonal of the bisimplicial abelian group $\mathcal{X}(\Gmn{n})(\wh{\Delta}^{\bullet}_{K/k}) $ is contractible.
	
	We need to show that $\mathcal{X}$ satisfies Suslin's contractibility, i.e. that for every $U \in \Sm$, the diagonal of $\mathcal{X}(\Gmn{1} \times U)(\wh{\Delta}^{\bullet}_{K/k}) $ is contractible.
	So take $U \in \Sm$. Then the functor $\mathcal{X}(U \times -) : \Sm \rightarrow \Delta^{op}\ShvA$ is again a special enriched motivic $\Cor$-space, so we can form the framed bispectrum $ev_{S^1,\Gm}(\mathcal{X}(U \times -))$.
	Let $ev_{S^1,\Gm}(\mathcal{X}(U \times -))^f$ be a levelwise local fibrant replacement of $ev_{S^1,\Gm}(\mathcal{X}(U \times -))$.
	From \cite[Lemma 2.8]{garkusha2018triangulated} it follows that $ev_{S^1,\Gm}(\mathcal{X}(U \times -))^f$ is motivically fibrant.
	
	By Theorem \ref{ROLemma} we have an isomorphism
	$$ev_{S^1,\Gm}(\mathcal{X}) \wedge \Sigma^{\infty}_{S^1,\Gm}U_+ \cong  ev_{S^1,\Gm}(\mathcal{X}(U \times -))$$
	in $\SH(k)[1/p]$.
	So after inverting $p$, the bispectrum $ev_{S^1,\Gm}(\mathcal{X}(U \times -))^f$ is a motivically fibrant replacement of $ev_{S^1,\Gm}(\mathcal{X}) \wedge \Sigma^{\infty}_{S^1,\Gm}U_+$.
	
	Since both $ev_{S^1,\Gm}(\mathcal{X})$ and $\Sigma^{\infty}_{S^1,\Gm}U_+$ are very effective, this implies that
	$ev_{S^1,\Gm}(\mathcal{X}(U \times -))^f$ is very effective in $\SH(k)[1/p]$.
	
	From Lemma \ref{invertplemma} it now follows that $ev_{S^1,\Gm}(\mathcal{X}(U \times -))^f$ is very effective when regarded as an object in $\SH(k)$.
	With \cite[Lemma 3.2]{garkusha2018triangulated} it follows that
	the diagonal of $\mathcal{X}(\Gmn{1} \times U)(\wh{\Delta}^{\bullet}_{K/k}) $ is contractible, so $\mathcal{X}$ satisfies Suslin's contractibility.
\end{proof}

The proof of Proposition \ref{effcomparsion} also implies the following corollary.
\begin{cor}
	Let $\mathcal{X}$ be a special enriched motivic $\Cor$-space.
	Then $\mathcal{X}$ is very effective in the sense of Defintion \ref{suslindef} if and only if for every $n \geq 1$ the diagonal of
	$\mathcal{X}(\Gmn{n})(\widehat{\Delta}^\bullet_{K/k})$
	is contractible.
\end{cor}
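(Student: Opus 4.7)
The plan is to extract the statement directly from the argument of Proposition \ref{effcomparsion} by isolating the role played by \cite[Lemma 3.2]{garkusha2018triangulated}. That lemma gives a bispectrum-level criterion for very effectiveness in terms of the contractibility of diagonals of the form $\mathcal{X}(\Gmn{n})(\semilocalsimplex)$, and once combined with Proposition \ref{effcomparsion} the corollary follows in both directions.

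For the forward implication, assume that $\mathcal{X}$ is very effective in the sense of Definition \ref{suslindef}. By Proposition \ref{effcomparsion}, the associated framed bispectrum $ev_{S^1,\Gm}(\mathcal{X})$ is very effective in the sense of \cite[Definition 3.5]{garkusha2018triangulated}. Applying \cite[Lemma 3.2]{garkusha2018triangulated} to a motivically fibrant replacement of this bispectrum immediately yields the contractibility of the diagonal of $\mathcal{X}(\Gmn{n})(\semilocalsimplex)$ for each $n \geq 1$. This is precisely the observation already recorded at the beginning of the second half of the proof of Proposition \ref{effcomparsion}.

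For the converse, suppose that the diagonal of $\mathcal{X}(\Gmn{n})(\semilocalsimplex)$ is contractible for every $n \geq 1$. The goal is to deduce that $\mathcal{X}$ satisfies Suslin's contractibility, and by Proposition \ref{effcomparsion} it suffices to prove that $ev_{S^1,\Gm}(\mathcal{X})$ is very effective. I would invoke the reverse direction of \cite[Lemma 3.2]{garkusha2018triangulated}: the vanishing of the relevant diagonals characterises very effective framed bispectra, so the hypothesis on $\mathcal{X}$ translates directly into very effectiveness of $ev_{S^1,\Gm}(\mathcal{X})$. Then Proposition \ref{effcomparsion} gives that $\mathcal{X}$ is very effective.

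The only potential obstacle is whether \cite[Lemma 3.2]{garkusha2018triangulated} is stated as a true characterisation or only as one implication. If it is only the implication used in Proposition \ref{effcomparsion}, the converse can still be recovered with the machinery already developed: given the hypothesis on the diagonals, one uses Theorem \ref{ROLemma} to write $ev_{S^1,\Gm}(\mathcal{X}(U\times -)) \cong ev_{S^1,\Gm}(\mathcal{X}) \wedge \Sigma^{\infty}_{S^1,\Gm}U_+$ in $\SH(k)[1/p]$, observes that taking $U=\mathrm{pt}$ together with the diagonal hypothesis forces very effectiveness of $ev_{S^1,\Gm}(\mathcal{X})$ in $\SH(k)[1/p]$, then descends via Lemma \ref{invertplemma} to $\SH(k)$, and finally applies Proposition \ref{effcomparsion} to conclude that $\mathcal{X}$ is very effective. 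This is exactly the pattern already used in the proof of Proposition \ref{effcomparsion}, which is why the author states that that proof also implies the corollary.
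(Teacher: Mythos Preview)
Your proposal is correct and follows the paper's intended approach: the corollary is extracted directly from Proposition~\ref{effcomparsion}, since the condition on the diagonals of $\mathcal{X}(\Gmn{n})(\semilocalsimplex)$ for $n\geq 1$ is essentially the definition (Definition~3.5 in \cite{garkusha2018triangulated}) of the framed bispectrum $ev_{S^1,\Gm}(\mathcal{X})$ being effective, so the corollary is just a restatement of that proposition. Your fallback paragraph is unnecessary (and the $U=\mathrm{pt}$ step there is circular), but the main argument already suffices.
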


Let $\mathcal{U} : DM_{\Cor} \rightarrow \SH(k)$ be the canonical forgetful functor, and let $\mathcal{L} : \SH(k) \rightarrow DM_{\Cor}$ be its left adjoint.
Let $DM_{\Cor}^{\eff}$ be the full triangulated subcategory of $DM_{\Cor}$ compactly generated by the set
$\{ \motive{U} \mid U \in \Smk\}.$
See \ref{Gmmotivedef} for the definition of $\motive{U}$.
Recall that $\SH^{\eff}(k)$ is the full subcategory of $\SH(k)$ generated by the suspension bispectra $\Sigma^{\infty}_{S^1,\Gm}U_+ $ for $U \in \Smk$.

\begin{lem}\label{lemcompactrestrict}
	Let $\cc C$ and $\cc D$ be triangulated categories, and
	let $F: \cc C \rightarrow \cc D$ be a triangulated functor. Assume that $F$ preserves small coproducts.
	Let $S_{\cc C}$ be a full triangulated subcategory of $\cc C$ compactly generated by a set $\Sigma_{\cc C}$.
	Let $S_{\cc D}$ be a full triangulated subcategory of $\cc D$ closed under small coproducts.
	Assume that for every $A \in \Sigma_{\cc C}$ we have $F(A) \in S_{\cc D}$. Then for every $A \in S_{\cc C}$ we have $F(A) \in S_{\cc D}$. In particular $F$ restricts to a triangulated functor
	$F : S_{\cc C} \rightarrow S_{\cc D}.$
\end{lem}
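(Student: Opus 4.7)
The plan is to run the standard localizing-subcategory argument. Define
$$T := \{ A \in S_{\cc C} \mid F(A) \in S_{\cc D} \}.$$
I will show that $T$ is a localizing subcategory of $S_{\cc C}$ (i.e.\ a full triangulated subcategory closed under small coproducts) that contains the generating set $\Sigma_{\cc C}$; compact generation then forces $T = S_{\cc C}$.

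First I would verify the closure properties, each of which reduces to a single property of $F$ or of $S_{\cc D}$. Closure under shifts: if $A \in T$, then $F(A[1]) \cong F(A)[1] \in S_{\cc D}$ since $F$ is triangulated and $S_{\cc D}$ is closed under shifts. Closure under cones: given a distinguished triangle $A \to B \to C \to A[1]$ in $S_{\cc C}$ with $A, C \in T$, the image $F(A) \to F(B) \to F(C) \to F(A)[1]$ is distinguished in $\cc D$, and since $F(A), F(C) \in S_{\cc D}$ and $S_{\cc D}$ is a triangulated subcategory, we get $F(B) \in S_{\cc D}$, so $B \in T$. Closure under small coproducts: for any family $\{A_i\}_{i\in I}$ in $T$, the hypothesis that $F$ preserves small coproducts gives $F(\coprod_i A_i) \cong \coprod_i F(A_i)$, and this lies in $S_{\cc D}$ because $S_{\cc D}$ is closed under small coproducts in $\cc D$.

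The hypothesis $F(\Sigma_{\cc C}) \subseteq S_{\cc D}$ directly gives $\Sigma_{\cc C} \subseteq T$. Since $T$ is a localizing subcategory of $S_{\cc C}$ containing $\Sigma_{\cc C}$, and $S_{\cc C}$ is compactly generated by $\Sigma_{\cc C}$ (so $S_{\cc C}$ coincides with the smallest localizing subcategory of itself containing $\Sigma_{\cc C}$), we conclude $T = S_{\cc C}$. Consequently $F(A) \in S_{\cc D}$ for every $A \in S_{\cc C}$, and the restriction $F\colon S_{\cc C} \to S_{\cc D}$ is automatically triangulated since it is a restriction of a triangulated functor.

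There is no real obstacle here; the only point that could conceivably require care is the last step, where one uses that ``compactly generated by $\Sigma_{\cc C}$'' entails that the smallest localizing subcategory of $S_{\cc C}$ containing $\Sigma_{\cc C}$ is all of $S_{\cc C}$. This is the standard content of compact generation (objects of $\Sigma_{\cc C}$ are compact and left-orthogonal-vanishing detects zero), so I would simply invoke it rather than reprove it.
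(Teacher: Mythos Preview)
Your proof is correct and follows essentially the same approach as the paper: both define the preimage $F^{-1}(S_{\cc D})$ (you intersect it with $S_{\cc C}$, the paper does not), verify it is closed under triangles and small coproducts, and then invoke compact generation to conclude that it contains all of $S_{\cc C}$. The paper cites \cite[Theorem 2.1]{neeman1996grothendieckduality} for the last step where you invoke the standard fact about localizing subcategories containing the compact generators.
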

\begin{proof}
	Consider the full subcategory $F^{-1}(S_{\cc D})$ in $\cc C$ consisting of all those objects $A \in \cc C$ for which $F(A) \in S_{\cc D}$.
	We need to show that $S_{\cc C} \subseteq F^{-1}(S_{\cc D})$.
	Since $\Sigma_{\cc C} \subseteq F^{-1}(S_{\cc D})$, it suffices due to \cite[Theorem 2.1]{neeman1996grothendieckduality} to show that the subcategory $F^{-1}(S_{\cc D})$ is a triangulated subcategory closed under triangles and small coproducts in $\cc C$.
	
	If we have a triangle
	$$X \rightarrow Y \rightarrow Z \rightarrow \Sigma X$$
	in $\cc C$ with $X, Y \in F^{-1}(S_{\cc D})$, then
	$$F(X) \rightarrow F(Y) \rightarrow F(Z) \rightarrow \Sigma F(X) $$
	is a triangle in $\cc D$ with $F(X), F(Y) \in S_{\cc D}$. Since $S_{\cc D}$ is closed under triangles it follows that $F(Z) \in S_{\cc D}$, so $Z \in F^{-1}(S_{\cc D})$, so $F^{-1}(S_{\cc D})$ is closed under triangles.
	Since $F$ preserves small coproducts and $S_{\cc D}$ is closed under small coproducts, it follows that $F^{-1}(S_{\cc D})$ is closed under small coproducts.
	Therefore $F^{-1}(S_{\cc D})$ is closed under triangles and small coproducts. We get that $S_{\cc C} \subseteq F^{-1}(S_{\cc D})$, which proves the lemma.
\end{proof}

\begin{lem}\label{lemmaLeffgenerated}
	If $X \in \SH^{\eff}(k)$, then $\mathcal{L}(X) \in DM_{\Cor}^{\eff}$.
	So the functor $\mathcal{L} : \SH(k) \rightarrow DM_{\Cor}$ restricts to a functor
	$$\mathcal{L}^{\eff} :\SH^{\eff}(k) \rightarrow DM_{\Cor}^{\eff}.$$
\end{lem}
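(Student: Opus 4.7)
The plan is to apply Lemma~\ref{lemcompactrestrict} directly with $\cc C=\SH(k)$, $\cc D=DM_{\Cor}$, and $F=\mathcal L$. The three input data for the lemma are all at hand: the subcategory $S_{\cc C}=\SH^{\eff}(k)$ is compactly generated by $\Sigma_{\cc C}=\{\Sigma^{\infty}_{S^1,\Gm}U_+\mid U\in\Smk\}$ by definition, and the subcategory $S_{\cc D}=DM_{\Cor}^{\eff}$ is, by its definition as the compactly generated triangulated subcategory spanned by $\{\motive{U}\mid U\in\Smk\}$, in particular closed under small coproducts in $DM_{\Cor}$.

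Next I would verify the hypotheses on the functor $F=\mathcal L$. Being the left adjoint of $\mathcal U:DM_{\Cor}\to\SH(k)$, the functor $\mathcal L$ is triangulated and preserves arbitrary small coproducts, which is exactly what Lemma~\ref{lemcompactrestrict} requires.

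It remains to check the generator condition: for each $A=\Sigma^{\infty}_{S^1,\Gm}U_+\in\Sigma_{\cc C}$ one must have $\mathcal L(A)\in S_{\cc D}$. This is precisely the content of Lemma~\ref{motivecomparison}, which supplies a canonical isomorphism
$$\mathcal L(\Sigma^{\infty}_{S^1,\Gm}U_+)\;\xrightarrow{\;\sim\;}\;\motive{U}$$
in $DM_{\Cor}$, and $\motive{U}$ lies in $DM_{\Cor}^{\eff}$ by the very definition of the latter.

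Lemma~\ref{lemcompactrestrict} then outputs $\mathcal L(A)\in DM_{\Cor}^{\eff}$ for every $A\in\SH^{\eff}(k)$, so $\mathcal L$ restricts to a triangulated functor $\mathcal L^{\eff}:\SH^{\eff}(k)\to DM_{\Cor}^{\eff}$, as required. There is essentially no obstacle; the only point needing a moment of care is confirming that Lemma~\ref{lemcompactrestrict} is stated in a form that applies to our $S_{\cc D}$, i.e.\ only requires closure under small coproducts (not full compact generation) of the target subcategory, which is indeed the case.
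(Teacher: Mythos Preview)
Your proposal is correct and follows essentially the same approach as the paper: invoke Lemma~\ref{motivecomparison} to verify that $\mathcal L$ sends the compact generators $\Sigma^{\infty}_{S^1,\Gm}U_+$ of $\SH^{\eff}(k)$ into $DM_{\Cor}^{\eff}$, then apply Lemma~\ref{lemcompactrestrict}. You have simply spelled out in more detail the hypotheses (preservation of coproducts by the left adjoint $\mathcal L$, closure of $DM_{\Cor}^{\eff}$ under coproducts) that the paper leaves implicit.
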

\begin{proof}
	By Lemma \ref{motivecomparison} we have $\mathcal{L}(\Sigma^{\infty}_{S^1,\Gm}U_+) \cong \motive{U} \in DM_{\Cor}^{\eff}$.
	Since the $\Sigma^{\infty}_{S^1,\Gm}U_+$ compactly generate $\SH^{\eff}(k)$ the result now follows from Lemma \ref{lemcompactrestrict}.
\end{proof}

\begin{lem}\label{lemUfilteredcolim}
	The triangulated functor $\mathcal{U} : DM_{\Cor} \rightarrow SH(k)$ preserves small coproducts.
\end{lem}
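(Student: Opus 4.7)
The plan is to deduce preservation of small coproducts by $\mathcal{U}$ from Neeman's criterion: for an adjunction $F \dashv G$ of triangulated functors between compactly generated triangulated categories, the right adjoint $G$ preserves small coproducts if and only if the left adjoint $F$ preserves compact objects. I would apply this to the adjunction $\mathcal{L} \dashv \mathcal{U}$, so the task reduces to showing that $\mathcal{L}$ preserves compactness.

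First I would recall that $\SH(k)$ is compactly generated by the bigraded suspensions of $\Sigma^{\infty}_{S^1,\Gm}U_+$ for $U \in \Smk$, and that $DM_{\Cor}$ is compactly generated by the bigraded suspensions of $\motive{U}$ for $U \in \Smk$. In particular every $\motive{U}$ is compact in $DM_{\Cor}$. By Lemma~\ref{motivecomparison} we already know that $\mathcal{L}(\Sigma^{\infty}_{S^1,\Gm}U_+) \cong \motive{U}$, so $\mathcal{L}$ sends a set of compact generators of $\SH(k)$ to compact objects of $DM_{\Cor}$.

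Next I would argue that this is enough to conclude that $\mathcal{L}$ preserves all compact objects. Since $\mathcal{L}$ is a triangulated left adjoint it commutes with shifts, distinguished triangles and small coproducts, so the full subcategory of $\SH(k)$ consisting of those $X$ for which $\mathcal{L}(X)$ is compact in $DM_{\Cor}$ is a thick, shift-stable subcategory of $\SH(k)$ containing our generating set. Because the compact objects of any compactly generated triangulated category form the smallest thick, shift-stable subcategory containing any set of compact generators, this subcategory contains all of $\SH(k)^c$, i.e.\ $\mathcal{L}$ preserves compactness.

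With $\mathcal{L}$ known to preserve compact objects, Neeman's criterion yields that $\mathcal{U}$ preserves small coproducts. There is no genuine obstacle: the only nontrivial geometric input is the identification of $\mathcal{L}$ on suspension bispectra provided by Lemma~\ref{motivecomparison}; the rest is a formal thick-subcategory argument combined with the compact generation of both $\SH(k)$ and $DM_{\Cor}$.
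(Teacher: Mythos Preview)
Your proof is correct and rests on the same key point as the paper's: that $\mathcal{L}$ sends the compact generators $\Sigma^{\infty}_{S^1,\Gm}U_+\wedge\Gmn{n}$ of $\SH(k)$ to compact objects of $DM_{\Cor}$. The paper does not cite Neeman's criterion but instead unwinds it by hand, testing the comparison map $\coprod_i \mathcal{U}(A_i)\to\mathcal{U}(\coprod_i A_i)$ against these compact generators via the adjunction $\mathcal{L}\dashv\mathcal{U}$; your thick-subcategory step extending from generators to all compact objects is harmless but not needed, since testing on a generating set already detects the isomorphism.
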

\begin{proof}
	Let $I$ be a set, and $\{ A_i \mid i \in I\}$ a family of objects. We want to show that the canonical morphism
	$$\underset{i \in I}{\coprod} \mathcal{U}(A_i) \rightarrow  \mathcal{U}(\underset{i \in I}{\coprod} A_i) $$
	is an isomorphism in $SH(k)$.
	The triangulated category $SH(k)$ is compactly generated by the set
	$$\Sigma_{SH(k)} := \{\Sigma_{S^1,\Gm}^{\infty}U_+ \wedge \Gmn{n} \mid U \in \Smk, n \in \bb Z\}. $$
	Thus to show that the above morphism is an isomorphism, it suffices to show that for all $G \in \Sigma_{SH(k)}$ that the map
	$$\mathrm{Hom}_{SH(k)}(G, \underset{i \in I}{\coprod} \mathcal{U}(A_i)) \rightarrow \mathrm{Hom}_{SH(k)}(G,  \mathcal{U}(\underset{i \in I}{\coprod} A_i)) $$
	is an isomorphism of abelian groups.
	
	The objects $\Sigma_{S^1,\Gm}^{\infty}U_+ \wedge \Gmn{n}$ are compact in $SH(k)$, and also each $\mathcal{L}(\Sigma_{S^1,\Gm}^{\infty}U_+ \wedge \Gmn{n})$ is compact in $DM_{\Cor}$.
	So for all $G \in \Sigma_{SH(k)}$ we get a chain of bijections
	$$\mathrm{Hom}_{SH(k)}(G, \underset{i \in I}{\coprod} \mathcal{U}(A_i)) \cong \underset{i \in I}{\coprod} \mathrm{Hom}_{SH(k)}(G,  \mathcal{U}(A_i)) \cong \underset{i \in I}{\coprod} \mathrm{Hom}_{SH(k)}(\mathcal{L}(G),  A_i) \cong$$$$
	\cong \mathrm{Hom}_{SH(k)}(\mathcal{L}(G),  \underset{i \in I}{\coprod}A_i) \cong \mathrm{Hom}_{SH(k)}(G,  \mathcal{U}(\underset{i \in I}{\coprod}A_i)).$$
	Therefore 
	$$\underset{i \in I}{\coprod} \mathcal{U}(A_i) \rightarrow  \mathcal{U}(\underset{i \in I}{\coprod} A_i) $$
	is an isomorphism in $SH(k)$, and $\mathcal{U}$ preserves small coproducts.
\end{proof}

\begin{lem} \label{effDMSH}
	Assume that $\Cor$ satisfies the $\widehat{\Delta}$-property in the sense of Definition \ref{framedCorDef}.
	Then for all $X \in DM_{\Cor}$ we have $X \in DM_{\Cor}^{\eff}$ if and only if $\mathcal{U}(X) \in \SH^{\eff}(k)$.
\end{lem}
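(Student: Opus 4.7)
The plan is to prove both implications using the triangulated adjunction $\mathcal{L}\dashv\mathcal{U}$ together with the compact generation of $DM_{\Cor}^{\eff}$ by $\{\motive{U}\mid U\in\Smk\}$ and of $\SH^{\eff}(k)$ by $\{\Sigma^{\infty}_{S^1,\Gm}U_+\mid U\in\Smk\}$.

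For the forward implication, I would invoke Lemma~\ref{lemcompactrestrict} with $F=\mathcal{U}$: the functor $\mathcal{U}$ preserves small coproducts by Lemma~\ref{lemUfilteredcolim}, and the target $\SH^{\eff}(k)$, being a localizing subcategory of $\SH(k)$, is closed under small coproducts. Hence it suffices to verify that $\mathcal{U}(\motive{U})\in\SH^{\eff}(k)$ for each $U\in\Smk$. Via Lemma~\ref{motivecomparison}, $\motive{U}\cong\mathcal{L}(\Sigma^{\infty}_{S^1,\Gm}U_+)$, and inspecting the definition of $\motive{U}$ as a $\Gm$-spectrum whose $n$-th level is $M_{\Cor}(U\times\Gmn{n})$, its image under $\mathcal{U}$ is a bispectrum concentrated in non-negative $\Gm$-weights. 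The $\widehat\Delta$-hypothesis enters here to guarantee that, after forgetting $\Cor$-transfers, this bispectrum is motivically equivalent to an object of $\SH^{\eff}(k)$.

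For the converse, assume $\mathcal{U}(X)\in\SH^{\eff}(k)$. Since $DM_{\Cor}^{\eff}$ is a compactly generated localizing subcategory of the compactly generated triangulated category $DM_{\Cor}$, Neeman's localization theorem produces a distinguished triangle
$$X_{\geq 0}\to X\to C\to X_{\geq 0}[1]$$
with $X_{\geq 0}\in DM_{\Cor}^{\eff}$ and $C\in(DM_{\Cor}^{\eff})^{\perp}$. Applying the triangulated functor $\mathcal{U}$ yields a triangle in $\SH(k)$ in which $\mathcal{U}(X_{\geq 0})\in\SH^{\eff}(k)$ by the forward direction just established and $\mathcal{U}(X)\in\SH^{\eff}(k)$ by hypothesis, so the cone $\mathcal{U}(C)$ lies in $\SH^{\eff}(k)$. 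On the other hand, by the adjunction $\mathcal{L}\dashv\mathcal{U}$ combined with $\mathcal{L}(\Sigma^{\infty}_{S^1,\Gm}U_+)\cong\motive{U}$, the condition $C\in(DM_{\Cor}^{\eff})^{\perp}$ translates into $\mathcal{U}(C)\in(\SH^{\eff}(k))^{\perp}$. An object in $\SH^{\eff}(k)\cap(\SH^{\eff}(k))^{\perp}$ has vanishing identity endomorphism, hence $\mathcal{U}(C)=0$.

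To bootstrap $\mathcal{U}(C)=0$ up to $C=0$, I would run the adjunction against a generating family of the whole category. For every $Y\in\SH(k)$,
$$\Hom_{DM_{\Cor}}(\mathcal{L}(Y),C)\cong\Hom_{\SH(k)}(Y,\mathcal{U}(C))=0.$$
Letting $Y$ range over the compact generating family $\{\Sigma^{\infty}_{S^1,\Gm}U_+\wedge\Gmn{n}[m]\}$ of all of $\SH(k)$, the images $\{\mathcal{L}(Y)\}=\{\motive{U}\wedge\Gmn{n}[m]\}$ form a compact generating family of the entire $DM_{\Cor}$ (using Lemma~\ref{motivecomparison} plus the fact that $\mathcal{L}$, being a left adjoint among triangulated categories, commutes with shifts and $\Gm$-smash). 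Thus $C\in(DM_{\Cor})^{\perp}=0$, giving $X\simeq X_{\geq 0}\in DM_{\Cor}^{\eff}$. The most delicate step is the effectivity check $\mathcal{U}(\motive{U})\in\SH^{\eff}(k)$ in the forward direction, where the $\widehat\Delta$-property is needed to control the $\Gm$-stable behaviour of $M_{\Cor}$ after forgetting transfers.
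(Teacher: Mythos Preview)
Your converse direction is correct and is the paper's argument in slightly different clothing. The paper invokes the right adjoint $r_0$ to the inclusion $\iota: DM_{\Cor}^{\eff}\hookrightarrow DM_{\Cor}$ and shows that the counit $\iota r_0(X)\to X$ becomes an isomorphism after applying $\mathcal U$, then uses conservativity of $\mathcal U$. Your localization triangle $X_{\geq 0}\to X\to C$ is exactly this counit with its cone, and your final paragraph is the standard proof that $\mathcal U$ is conservative. So there is no real difference here.

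The forward direction, however, has a genuine gap at precisely the step you flag as delicate. Saying that $\mathcal U(\motive{U})$ is ``concentrated in non-negative $\Gm$-weights'' is not an argument for membership in $\SH^{\eff}(k)$: every bispectrum is trivially non-negatively indexed as a diagram, whereas effectivity is the homotopical condition of lying in the localizing subcategory generated by suspension spectra. More seriously, the $\widehat\Delta$-property is a statement only about $M_{\Cor}(\Gmn{n})$, i.e.\ about $\motive{pt}$; it says nothing directly about $M_{\Cor}(U\times\Gmn{n})$ for general $U$, and you give no mechanism for passing from one to the other.

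The paper bridges this as follows. Since $p$ is invertible in $\Cor$, Lemma~\ref{invertplemma} lets the adjunction $\mathcal L\dashv\mathcal U$ factor through $\SH(k)[1/p]$. There $\Sigma^{\infty}_{S^1,\Gm}U_+$ is strongly dualizable (\cite[Corollary~B.2]{levine2019algebraic}), so the projection formula \cite[Lemma~4.6]{bachmann2020effectivity} yields
\[
\mathcal U(\motive{U})\;\cong\;\mathcal U\bigl(\mathcal L(\Sigma^{\infty}_{S^1,\Gm}U_+)\wedge\mathbbm{1}_{\Cor}\bigr)\;\cong\;\Sigma^{\infty}_{S^1,\Gm}U_+\wedge\mathcal U(\mathbbm{1}_{\Cor}),
\]
reducing everything to the single claim $\mathcal U(\mathbbm{1}_{\Cor})\in\SH^{\eff}(k)$. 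Now $\mathcal U(\mathbbm{1}_{\Cor})\cong\motive{pt}$ is a framed bispectrum (this uses that $\Cor$ has framed correspondences), and the $\widehat\Delta$-property is exactly the statement that it is effective in the sense of \cite[Definition~3.5]{garkusha2018triangulated}; \cite[Theorem~3.6]{garkusha2018triangulated} then converts this into membership in $\SH^{\eff}(k)$. With $\mathcal U(\motive{U})\in\SH^{\eff}(k)$ established for each $U$, your appeal to Lemmas~\ref{lemUfilteredcolim} and~\ref{lemcompactrestrict} finishes the forward direction exactly as you describe.
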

\begin{proof}
	Our first claim is that $\mathcal{U}(\motive{U}) \in  \SH^{\eff}(k)$ for every $U \in \Smk$.
	
	Let $\mathbbm{1}_{\Cor} \in DM_{\Cor}$ be the monoidal unit. Then $$\mathcal{U} (\motive{U}) \cong \mathcal{U} (\motive{U} \wedge \mathbbm{1}_{\Cor}).$$
	We can regard $\SH^{\eff}(k)[1/p]$ as a full subcategory of $\SH^{\eff}(k)$.
	From Lemma \ref{invertplemma} it follows that the adjunction $\mathcal{U} : DM_{\Cor} \leftrightarrows \SH(k) : \mathcal{L}$ restricts to an adjunction  $$\mathcal{U} : DM_{\Cor} \leftrightarrows \SH(k)[1/p] : \mathcal{L}.$$
	By \cite[Appendix B, Corollary B.2]{levine2019algebraic} the suspension spectrum $\Sigma_{S^1,\Gm}U_+$ is strongly dualizable in $\SH(k)[1/p]$.
	So we can apply \cite[Lemma 4.6]{bachmann2020effectivity} to get an isomorphism 
	$$\mathcal{U} (\motive{U} \wedge \mathbbm{1}_{\Cor}) \cong \mathcal{U} (\mathcal{L}(\Sigma^{\infty}_{S^1,\Gm}U_+) \wedge \mathbbm{1}_{\Cor}) \cong  \Sigma_{S^1,\Gm}U_+ \wedge \mathcal{U}(\mathbbm{1}_{\Cor}) $$
	in $\SH(k)[1/p]$.
	Now $\Sigma_{S^1,\Gm}U_+$ is effective, and $\SH^{\eff}(k)$ is closed under the $\wedge$ product, so to show that $\mathcal{U} (\motive{U}) \in \SH^{\eff}(k)$ , we now just need to show that $\mathcal{U}(\mathbbm{1}_{\Cor}) \in \SH^{\eff}(k)$.
	
	The bispectrum $\mathcal{U}(\mathbbm{1}_{\Cor})$  is isomorphic to the bispectrum $\motive{pt} = (M_{\Cor}(\Gmn{j}))_{j \geq 0}$. By construction, the latter bispectrum is a framed bispectrum in the sense of \cite{garkusha2018triangulated}, because $\Cor$ has framed correspondences. Since $\Cor$ also has the $\widehat{\Delta}$-property, the bispectrum $\motive{pt}$ is effective in the sence of \cite[Definition 3.5]{garkusha2018triangulated}.
	And by \cite[Theorem 3.6]{garkusha2018triangulated} this implies that $\mathcal{U}(\mathbbm{1}_{\Cor}) \in \SH^{\eff}(k)$. 
	So we now have for every $U \in \Smk$ that $\mathcal{U}(\motive{U}) \in \SH^{\eff}(k)$.
	
	Due to Lemma \ref{lemUfilteredcolim} we can now apply Lemma \ref{lemcompactrestrict} to get for every $E \in DM^{\eff}_{\Cor}$ that $\mathcal{U}(E) \in SH^{\eff}(k)$ (This argument is similar to an argument used in the proof of \cite[Corollary 5.4]{bachmann2020effectivity}).
	So the functor $\mathcal{U} :  DM_{\Cor} \rightarrow \SH(k)$ restricts to a functor $$\mathcal{U}^{\eff} : DM_{\Cor}^{\eff} \rightarrow  \SH^{\eff}(k).$$
	This shows one direction of the lemma.
	Let us now show the other direction of the lemma.
	According to Lemma \ref{lemmaLeffgenerated} the functor $\mathcal{L} : \SH(k)\rightarrow DM_{\Cor}$ restricts to a functor $$\mathcal{L}^{\eff} : \SH^{\eff}(k)\rightarrow DM_{\Cor}^{\eff}.$$
	The functor $\mathcal{L}^{\eff}$ is left adjoint to $\mathcal{U}^{\eff}$.

	By \cite[Remark 2.1]{voevodsky2002open} the inclusion functors $\iota: DM_{\Cor}^{\eff} \rightarrow DM_{\Cor}$ and $\iota: \SH^{\eff}(k) \rightarrow \SH(k)$ have right adjoints $r_0 : DM_{\Cor} \rightarrow DM_{\Cor}^{\eff}$ and $r_0 : \SH(k)\rightarrow \SH^{\eff}(k)$. 
	
	The following diagrams commute:
	$$\xymatrix{  DM_{\Cor}^{\eff} \ar[d]_{\iota} &  \ar[l]_{\mathcal{L}^{\eff}}  \SH^{\eff}(k) \ar[d]^{\iota} & & DM_{\Cor}^{\eff} \ar[d]_{\iota} \ar[r]^{\mathcal{U}^{\eff}}  &   \SH^{\eff}(k) \ar[d]^{\iota}\\
		DM_{\Cor}  & \ar[l]_{\mathcal{L}} \SH(k) & & 
		DM_{\Cor} \ar[r]^{\mathcal{U}} &  \SH(k) }  $$
	From the commutativity of the left diagram it follows by adjunction that also the following diagram commutes:
	$$\xymatrix{  DM_{\Cor}^{\eff} \ar[r]^{\mathcal{U}^{\eff}} & \SH^{\eff}(k)  \\
		DM_{\Cor} \ar[u]^{r_0}\ar[r]^{\mathcal{U}}&\ar[u]_{r_0} \SH(k)} $$
	Take $X \in DM_{\Cor}$ such that $\mathcal{U}(X) \in \SH^{\eff}(k)$. We need to show that $X \in DM_{\Cor}^{\eff}$. Since $\mathcal{U}(X) \in \SH^{\eff}(k)$ the counit $\epsilon$ of the adjunction $\iota: \SH^{\eff}(k) \rightleftarrows \SH(k) : r_0$ is an isomorphism at $\mathcal{U}(X)$. So
	$$\epsilon_{\mathcal{U}(X)} :  \iota(r_0(\mathcal{U}(X))) \overset{\sim}{\rightarrow} \mathcal{U}(X) $$
	is an isomorphism in $\SH(k)$.
	By the commutativity of the above diagram this implies that the composite
	$$\mathcal{U}(\iota(r_0(X))) = \iota(\mathcal{U}^{\eff}(r_0(X))) \cong \iota(r_0(\mathcal{U}(X))) \overset{\sim}{\rightarrow} \mathcal{U}(X) $$ is an isomorphism in $\SH(k)$. But this composite is equal to $\mathcal{U}(\epsilon_X)$ where 
	$$\epsilon_X: \iota(r_0(X)) \rightarrow X $$ is the counit map of the adjunction $\iota: DM_{\Cor}^{\eff} \rightleftarrows DM_{\Cor} : r_0$.
	Now the forgetful functor $\mathcal{U} : DM_{\Cor} \rightarrow \SH(k)$ is conservative, so if $\mathcal{U}(\epsilon_X)$ is an isomorphism in $\SH(k)$, then also $\epsilon_X$ is an isomorphism in $DM_{\Cor}$.
	But this then implies that $X$ lies in $DM_{\Cor}^{\eff}$, which proves the lemma.
\end{proof}

We have an evaluation functor
$$ev_{\Gm} : \Ch([\Sm,\ShvA]) \rightarrow \SpGm(\Ch(\ShvA)).$$
For $\mathcal{X} \in [\Sm,\Delta^{op}\ShvA]$ we define
$$ev_{\Gm}(\mathcal{X}):= ev_{\Gm}(\Lambda(\mathcal{X})).$$

\begin{lem} \label{evforgetllemma}
	For $\mathcal{X} \in \mathrm{Spc}_{\Cor}[\Sm]$ we have a canonical isomorphism in $\SH(k)$
	$$\mathcal{U}(ev_{\Gm}(\mathcal{X})) \xrightarrow \sim ev_{S^1,\Gm}(\mathcal{X}).$$
\end{lem}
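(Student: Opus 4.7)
The plan is to identify the two bispectra weight-by-weight via the Dold-Kan correspondence, essentially recycling the analysis already carried out in the proof of Theorem~\ref{ROLemma}.

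First I would unfold the left-hand side. By Definition~\ref{LambdaDef} and the definition of $ev_{\Gm}(\mathcal{X})$ just preceding the lemma, $ev_{\Gm}(\mathcal{X})$ is the $\Gm$-spectrum in $\Ch(\ShvA)$ whose weight $n$ is $\Lambda(\mathcal{X})(\Gmn{n})=DK^{-1}(\mathcal{X}(\Gmn{n}))$, a connective chain complex of sheaves with $\Cor$-transfers, and whose bonding maps come from the enriched functoriality of $\mathcal{X}$. Applying the forgetful functor $\mathcal{U}:DM_{\Cor}\to\SH(k)$ amounts, in each $\Gm$-weight, to forgetting the transfer structure and then passing from a connective chain complex of Nisnevich sheaves of abelian groups to its associated Eilenberg--MacLane $S^1$-spectrum of pointed simplicial sheaves. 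This is precisely the description of $\mathcal{U}$ recorded in \cite[Section~7]{bonart2022paper1} via the composite $\Ch(\ShvA)\xrightarrow{\tau_{\geq 0}}\Ch_{\geq 0}(\ShvA)\xrightarrow{DK}\Delta^{op}\ShvA\xrightarrow{U}\cc M$ (extended spectrum-wise), which was used in the last stage of the proof of Theorem~\ref{ROLemma}.

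Second I would unfold the right-hand side. By Definition~\ref{associatedbispectrum}, $ev_{S^1,\Gm}(\mathcal{X})=ev_{S^1,\Gm}(\EM(\mathcal{X}))$ is the bispectrum whose $n$-th $\Gm$-weight is obtained by running the $S^1$-evaluation of the $\Gamma$-space $k_+\mapsto\mathcal{X}(\Gmn{n})^{\oplus k}$. By Lemma~\ref{hatcomparison} this coincides with the $(S^1,\Gm)$-evaluation of the extended functor $\widehat{\mathcal{X}}:f\cc M\to\Delta^{op}\ShvA$. Now the key classical fact is that for a simplicial abelian sheaf $A$, the Eilenberg--MacLane $S^1$-spectrum built from the $\Gamma$-space $k_+\mapsto A^{\oplus k}$ is naturally isomorphic, inside $\SH(k)$, to the $S^1$-spectrum $DK\circ\tau_{\geq 0}$ applied to $DK^{-1}(A)$.

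Third, combining these two steps weight-by-weight gives a natural isomorphism of bispectra in $\SH(k)$; compatibility with the $\Gm$-bonding maps is automatic because both sides inherit them from the enriched-functor structure of $\mathcal{X}$ via the same map $\mathcal{X}(\Gmn{n})\otimes\Gmn{1}\to\mathcal{X}(\Gmn{n+1})$. In fact all three of these identifications are exactly the three vertical isomorphisms on the right column of the big commutative diagram at the end of the proof of Theorem~\ref{ROLemma} (from top to bottom: $ev_{S^1,\Gm}(\Lambda(\mathcal{X})^{\cc M}_0)\to ev_{S^1,\Gm}(DK\circ\widehat{\Lambda(\mathcal{X})})\to ev_{S^1,\Gm}(\widehat{\mathcal{X}})\to ev_{S^1,\Gm}(\mathcal{X})$), so the lemma is obtained for free by observing that the top object in that column is canonically $\mathcal{U}(ev_{\Gm}(\mathcal{X}))$.

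The main obstacle is purely notational: one must carefully distinguish the two uses of Dold--Kan in play---one that moves between $\Ch_{\geq 0}(\ShvA)$ and $\Delta^{op}\ShvA$ degreewise in the simplicial direction, and one that identifies an EM $S^1$-spectrum with a connective chain complex---and make sure the $\Gm$-direction bonding is preserved. This bookkeeping is essentially what \cite[Section~7]{bonart2022paper1} accomplishes, and the pointwise local fibrancy of $\mathcal{X}$ (which ensures $\Lambda(\mathcal{X})$ is $\sim$-fibrant by Proposition~\ref{4properties} and \cite[Proposition 4.13]{bonart2022paper1}) is exactly the hypothesis needed so that the relevant naive evaluations represent the correct objects of $\SH(k)$.
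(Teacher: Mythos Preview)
Your strategy of routing through the column of isomorphisms from Theorem~\ref{ROLemma} is different from the paper's, and it has a genuine gap at the crucial step.

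The paper argues directly: it writes down the $(n,m)$-level of each bispectrum explicitly, namely
\[
ev_{S^1,\Gm}(\mathcal{X})[n](m)\;=\;\bb Z^{S^n}\otimes\mathcal{X}(\Gmn{m}),
\qquad
\mathcal{U}(ev_{\Gm}(\mathcal{X}))[n](m)\;=\;DK\bigl(DK^{-1}(\mathcal{X}(\Gmn{m}))[n]\bigr),
\]
observes that $A[n]\cong A\otimes DK^{-1}(\bb Z^{S^n})$, and then invokes the Eilenberg--Zilber/Alexander--Whitney deformation retraction to obtain a natural levelwise homotopy equivalence. No detour through $\Lambda(\mathcal{X})^{\cc M}$, $\widehat{\Lambda(\mathcal{X})}$, or $\hat{\mathcal{X}}$ is needed.

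The gap in your argument is the assertion that ``the top object in that column is canonically $\mathcal{U}(ev_{\Gm}(\mathcal{X}))$''. The top object is $ev_{S^1,\Gm}(\Lambda(\mathcal{X})^{\cc M}_0)$, i.e.\ the $(S^1,\Gm)$-evaluation of the $0$-th level of the $\Sp_{S^1}(\cc M)$-valued functor $\Lambda(\mathcal{X})^{\cc M}$ on $f\cc M$. What equals $\mathcal{U}(ev_{\Gm}(\mathcal{X}))$ is rather the $\Gm$-evaluation of the \emph{full} $\Sp_{S^1}$-valued functor, whose bilevel $(n,m)$ is $\hat U(\Lambda(\mathcal{X})(\Gmn{m}))_n$. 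By contrast, $ev_{S^1,\Gm}(\Lambda(\mathcal{X})^{\cc M}_0)$ at bilevel $(n,m)$ is $\Lambda(\mathcal{X})^{\cc M}_0(S^n\wedge\Gmn{m})$, a value of the extended functor on a non-representable object of $f\cc M$. Identifying these two is itself a nontrivial comparison (essentially an assembly map for the $S^1$-direction), and establishing it amounts to exactly the Eilenberg--Zilber computation the paper performs directly. Your ``key classical fact'' in step~2 gestures at this, but the phrasing ``the $S^1$-spectrum $DK\circ\tau_{\geq 0}$ applied to $DK^{-1}(A)$'' does not describe an $S^1$-spectrum; what you need is the spectrum with $n$-th level $DK(DK^{-1}(A)[n])$, and comparing that to the Segal $S^1$-spectrum of $k_+\mapsto A^{\oplus k}$ \emph{is} the content of the lemma. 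So the reduction to Theorem~\ref{ROLemma} is circular: the missing identification is precisely what has to be proved, and the paper's short direct computation is the cleanest way to do it.
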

\begin{proof}
	Let $\bb Z^{S^n}$ be the reduced free simplicial abelian group on the pointed simplicial set $S^n$.
	The bispectrum $ev_{S^1,\Gm}(\mathcal{X}) = ev_{S^1,\Gm}(\EM(\mathcal{X}))$ can be computed in the $(n,m)$-th level as
	$$ev_{S^1,\Gm}(\mathcal{X})[n](m) = \bb Z^{S^n} \otimes \mathcal{X}(\Gmn{m})$$
	in $\cc M$.
	The bispectrum $\mathcal{U}(ev_{\Gm}(\mathcal{X})) = \mathcal{U}(ev_{\Gm}(\Lambda(\mathcal{X})))$ can be computed in the $(n,m)$-th level as
	$$\mathcal{U}(ev_{\Gm}(\mathcal{X}))[n](m) = DK(DK^{-1}(\mathcal{X})(\Gmn{m})[m])$$
	in $\cc M$.
	We claim that there is a natural homotopy equivalence
	$$DK(DK^{-1}(\mathcal{X})(\Gmn{m})[m]) \rightarrow \bb Z^{S^n} \otimes \mathcal{X}(\Gmn{m}) $$
	in $\cc M$.
	The chain complex $DK^{-1}(\bb Z^{S^n})$ is $\bb Z$ in degree $n$ and $0$ in all other degrees. It follows for every chain complex $A$ that
	$$A[n] \cong A \otimes DK^{-1}(\bb Z^{S^n}).$$
	According to \cite{nlab:eilenberg-zilber/alexander-whitney_deformation_retraction} the Dold-Kan correspondence preserves tensor products up to homotopy equivalence.
	We then get a homotopy equivalence
	$$DK(\Lambda(\mathcal{X})(\Gmn{m})[m]) \cong DK(DK^{-1}(\mathcal{X})(\Gmn{m})\otimes DK^{-1}(\bb Z^{S^n})) \rightarrow DK(DK^{-1}(\mathcal{X}(\Gmn{m})\otimes\bb Z^{S^n})) \cong$$$$\cong \mathcal{X}(\Gmn{m})\otimes\bb Z^{S^n}. 
	$$
	These maps assemble together into an isomorphism
	$\mathcal{U}(ev_{\Gm}(\mathcal{X})) \xrightarrow \sim ev_{S^1,\Gm}(\mathcal{X})$
	in $SH(k)$.
\end{proof}

Let $\mathrm{Spc}^{\veff}_{\Cor}[\Sm]$ be the full subcategory of $\mathrm{Spc}_{\Cor}[\Sm]$ consisting of the very effective special enriched motivic $\Cor$-spaces. By definition it is then also full subcategory of $\cc D([\Sm,\Delta^{op}\ShvA])$ consisting of the very effective special enriched motivic $\Cor$-spaces.

\begin{thm} \label{veffequivalence}
	Assume that $\Cor$ satisfies the $\widehat{\Delta}$-property in the sense of Definition \ref{framedCorDef}.
	Then the naive $\Gm$-evaluation functor induces an equivalence of categories
	$$ev_{\Gm} : \mathrm{Spc}^{\veff}_{\Cor}[\Sm] \rightarrow DM_{\Cor, \geq 0}^{\eff} .$$
	
\end{thm}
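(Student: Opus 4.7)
The plan is to bootstrap the equivalence from Theorem \ref{geqtheorem} by restricting it along the fully faithful inclusions $\mathrm{Spc}^{\veff}_{\Cor}[\Sm] \hookrightarrow \mathrm{Spc}_{\Cor}[\Sm]$ and $DM^{\eff}_{\Cor, \geq 0} \hookrightarrow DM_{\Cor, \geq 0}$. Since $ev_{\Gm} : \mathrm{Spc}_{\Cor}[\Sm] \to DM_{\Cor, \geq 0}$ is already known to be an equivalence, everything will reduce to matching up the very effective subcategories on either side.

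The key step is to establish the following chain of equivalences for $\mathcal{X} \in \mathrm{Spc}_{\Cor}[\Sm]$:
$$\mathcal{X} \text{ is very effective} \iff ev_{S^1,\Gm}(\mathcal{X}) \in \SH^{\eff}(k) \iff \mathcal{U}(ev_{\Gm}(\mathcal{X})) \in \SH^{\eff}(k) \iff ev_{\Gm}(\mathcal{X}) \in DM^{\eff}_{\Cor}.$$
The first equivalence is Proposition \ref{effcomparsion}, which applies because $\Cor$ has framed correspondences (the running assumption of the section). The second equivalence uses the canonical isomorphism $\mathcal{U}(ev_{\Gm}(\mathcal{X})) \cong ev_{S^1,\Gm}(\mathcal{X})$ in $\SH(k)$ supplied by Lemma \ref{evforgetllemma}. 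The third equivalence is exactly Lemma \ref{effDMSH}; this is the only place where the $\widehat{\Delta}$-property hypothesis of the theorem enters, and it is essential for both directions of that lemma.

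Granted this chain, I would conclude the theorem in two steps. First, the restricted functor $ev_{\Gm} : \mathrm{Spc}^{\veff}_{\Cor}[\Sm] \to DM_{\Cor, \geq 0}$ actually lands in $DM^{\eff}_{\Cor, \geq 0}$, using the forward direction of the chain together with the fact that $ev_{\Gm}(\mathcal{X}) \in DM_{\Cor, \geq 0}$ automatically. Full faithfulness on $\mathrm{Spc}^{\veff}_{\Cor}[\Sm]$ is inherited from Theorem \ref{geqtheorem}. For essential surjectivity, start with any $F \in DM^{\eff}_{\Cor, \geq 0}$; Theorem \ref{geqtheorem} produces $\mathcal{X} \in \mathrm{Spc}_{\Cor}[\Sm]$ with $ev_{\Gm}(\mathcal{X}) \cong F$, and the reverse direction of the chain forces $\mathcal{X}$ to be very effective, so $\mathcal{X} \in \mathrm{Spc}^{\veff}_{\Cor}[\Sm]$, as required.

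The main obstacle is essentially bookkeeping rather than new mathematics: one has to verify that all the hypotheses on $\Cor$ (framed correspondences, $\widehat{\Delta}$-property, $p$ invertible) line up so that Proposition \ref{effcomparsion}, Lemma \ref{evforgetllemma}, and Lemma \ref{effDMSH} can all be invoked simultaneously. No additional constructions beyond those already developed in earlier sections are needed, since all the heavy technical work --- the comparison between very effectivity of $\mathcal{X}$ and of the associated framed bispectrum, and the characterization of $DM^{\eff}_{\Cor}$ via the forgetful functor $\mathcal{U}$ --- has been packaged into the lemmas we are invoking.
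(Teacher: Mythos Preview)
Your proposal is correct and follows essentially the same route as the paper: reduce to Theorem~\ref{geqtheorem} and then show that $\mathcal{X}$ is very effective iff $ev_{\Gm}(\mathcal{X}) \in DM^{\eff}_{\Cor}$ via the chain through $\SH^{\eff}(k)$ using Proposition~\ref{effcomparsion}, Lemma~\ref{evforgetllemma}, and Lemma~\ref{effDMSH}. The only small elision is that Proposition~\ref{effcomparsion} literally characterizes very effectivity of $\mathcal{X}$ in terms of effectivity of $ev_{S^1,\Gm}(\mathcal{X})$ as a \emph{framed} bispectrum (in the sense of \cite[Definition~3.5]{garkusha2018triangulated}), and one needs \cite[Theorem~3.6]{garkusha2018triangulated} to pass from that to membership in $\SH^{\eff}(k)$; the paper makes this intermediate step explicit.
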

\begin{proof}
	By Theorem \ref{geqtheorem} we have an equivalence $$ev_{\Gm} : \mathrm{Spc}_{\Cor}[\Sm] \rightarrow DM_{\Cor, \geq 0} .$$
	So we just need to show for $\mathcal{X} \in \mathrm{Spc}_{\Cor}[\Sm]$ that
	$\mathcal{X} \in \mathrm{Spc}^{\veff}_{\Cor}[\Sm]$ if and only if $ev_{\Gm}(\mathcal{X}) \in DM_{\Cor, \geq 0}^{\eff} $.
	By Proposition \ref{effcomparsion} we know that $\mathcal{X} \in \mathrm{Spc}^{\veff}_{\Cor}[\Sm]$ if and only if $ev_{S^1,\Gm}(\mathcal{X}) \in SH^{fr}_{\nis}(k)$ is effective. By \cite[Theorem 3.6]{garkusha2018triangulated} this is the case if and only if $ev_{S^1,\Gm}(\mathcal{X})$ lies in $\SH^{\eff}(k)$.
	By Lemma \ref{evforgetllemma} we have a canonical isomorphism 
	$$ev_{S^1,\Gm}(\mathcal{X}) \cong \mathcal{U}(ev_{\Gm}(\mathcal{X}))  $$
	in $\SH(k)$.
	So $ev_{S^1,\Gm}(\mathcal{X}) \in \SH^{\eff}(k)$ if and only if $\mathcal{U}(ev_{\Gm}(\mathcal{X})) \in \SH^{\eff}(k)$ and by Lemma \ref{effDMSH} this is the case if and only if $ev_{\Gm}(\mathcal{X}) \in DM_{\Cor}^{\eff}$, which proves the theorem.
\end{proof}

\section{Reconstructing $SH^{\veff}(k)_{\bb Q}$}

In this section we apply the techniques and results from the previous sections to give new models for the stable motivic homotopy category of effective and very effective motivic bispectra with rational coefficients. It also requires the reconstruction theorem by \cite{garkusha2019compositio} and the theory Milnor-Witt correspondences \cite{bachmann2020effectivity,milnorwitt5,CF, deglise2020milnorwitt,milnorwitt4,milnorwitt6}.

Let $\widetilde{\mathrm{Cor}}$ be the category of finite Milnor-Witt correspondences in the sense of \cite{CF}.
Then $\widetilde{\mathrm{Cor}}$ is a strict $V$-category of correspondences satisfying the cancellation property (See \cite{milnorwitt4} for details). Furthermore it has framed correspondences by \cite{deglise2020milnorwitt}. It also satisfies the $\widehat{\Delta}$-property by \cite{bachmann2020effectivity}. 

Denote by $SH(k)_{\bb Q}$ the category of motivic bispectra $E$ whose sheaves of stable motivic homotopy groups $\pi_{*,*}^{\bb A^1}(E)$ are sheaves of rational vector spaces. The category $SH(k)_{\bb Q}$ is also called the \textit{rational stable motivic homotopy category}. It is the homotopy category of a stable model structure in which
weak equivalences are those morphisms of bispectra $f:E\to E'$ for which $\pi_{*,*}^{\bb A^1}(f)\otimes\bb Q$ is an isomorphism.
Let $\SH(k)_{\bb Q, \geq 0}$ be the full subcategory of $SH(k)_{\bb Q}$ on the connective objects. Here a bispectrum object $X \in SH(k)_{\bb Q}$ with rational stable $\bb A^1$-homotopy groups $\underline{\pi}_{p,q}^{\bb A^1}(X)\otimes\bb Q$ is called \textit{connective}, if $$\underline{\pi}_{p,q}^{\bb A^1}(X)\otimes\bb Q \cong 0$$
for all $p < q $.

Throughout this section we assume the base field $k$ to be perfect of characteristic different from 2.
The assumption on the characteristic is typical when working with finite Milnor--Witt correspondences.
A theorem of Garkusha~\cite[Theorem 5.5]{garkusha2019compositio} states that the forgetful functor
	$$\mathcal{U}: DM_{\widetilde{\mathrm{Cor}},\bb Q} \rightarrow \SH(k)_{\bb Q} $$
is an equivalence of categories. This theorem was actually proven under the assumption that $k$ is also infinite.
The latter assumption is redundant due to~\cite[A.27]{DKO} saying that the main result of~\cite{GP4} about strict invariance
for Nisnevich sheaves with framed transfers is also true for finite fields. 

\begin{dfn}
	We define $\mathrm{Spc}_{\widetilde{\mathrm{Cor}}, \bb Q}[\Sm]$, respectively $DM_{\widetilde{\mathrm{Cor}},\bb Q, \geq 0}$ to be the category $\mathrm{Spc}_{\Cor}[\Sm]$, respectively $DM_{\Cor, \geq 0}$, associated to the category of correspondences $\Cor = \widetilde{\mathrm{Cor}} \otimes \bb Q$.
	We call $\mathrm{Spc}_{\widetilde{\mathrm{Cor}}, \bb Q}[\Sm]$ the category of \textit{rational enriched motivic $\widetilde{\mathrm{Cor}}$-spaces}.
\end{dfn}

The following theorem says that the special rational enriched motivic $\widetilde{\mathrm{Cor}}$-spaces recover $\SH(k)_{\bb Q, \geq 0}$. 

\begin{thm}\label{thmSHgeq}
	The $(S^1,\Gm)$-evaluation functor is an equivalence of categories
	$$ev_{S^1,\Gm}: \mathrm{Spc}_{\widetilde{\mathrm{Cor}}, \bb Q}[\Sm] \rightarrow \SH(k)_{\bb Q,\geq 0}.$$
\end{thm}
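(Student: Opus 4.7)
The plan is to factor $ev_{S^1,\Gm}$ as $\mathcal{U}\circ ev_\Gm$ via the natural isomorphism of Lemma~\ref{evforgetllemma}, and then compose two known equivalences. Since every prime is invertible in $\widetilde{\mathrm{Cor}}\otimes\bb Q$, Theorem~\ref{geqtheorem} supplies the first equivalence
$$ev_\Gm:\mathrm{Spc}_{\widetilde{\mathrm{Cor}},\bb Q}[\Sm]\xrightarrow{\sim}DM_{\widetilde{\mathrm{Cor}},\bb Q,\geq 0}.$$
Garkusha's reconstruction theorem~\cite[Theorem~5.5]{garkusha2019compositio} (together with the extension to finite base fields given in~\cite{DKO}) supplies the equivalence $\mathcal{U}:DM_{\widetilde{\mathrm{Cor}},\bb Q}\xrightarrow{\sim}\SH(k)_{\bb Q}$ with inverse $\mathcal{L}$. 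The proof thus reduces to showing that this latter equivalence restricts to one on the connective subcategories.

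To verify the restriction I would check the two containments $\mathcal{U}(DM_{\widetilde{\mathrm{Cor}},\bb Q,\geq 0})\subseteq\SH(k)_{\bb Q,\geq 0}$ and $\mathcal{L}(\SH(k)_{\bb Q,\geq 0})\subseteq DM_{\widetilde{\mathrm{Cor}},\bb Q,\geq 0}$ via corresponding families of generators. By Lemma~\ref{motivecomparison} we have $\mathcal{L}(\Sigma^\infty_{S^1,\Gm}U_+)\cong\motive{U}$ and hence $\mathcal{L}(\Sigma^\infty_{S^1,\Gm}U_+\wedge\Gmn{m}[n])\cong\motive{U}\wedge\Gmn{m}[n]$ for all $U\in\Smk$ and $m,n\geq 0$; the right-hand side is weightwise connective and so lies in $DM_{\widetilde{\mathrm{Cor}},\bb Q,\geq 0}$. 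Symmetrically $\mathcal{U}(\motive{U}\wedge\Gmn{m}[n])\cong\Sigma^\infty_{S^1,\Gm}U_+\wedge\Gmn{m}[n]\in\SH(k)_{\bb Q,\geq 0}$. Both connective subcategories are the nonnegative parts of natural $t$-structures (the homotopy $t$-structure on $\SH(k)_{\bb Q}$ and the weightwise standard $t$-structure on $\Gm$-spectra of chain complexes), and so are closed under small coproducts and extensions. Since $\mathcal{L}$ and $\mathcal{U}$ preserve small coproducts (the latter by Lemma~\ref{lemUfilteredcolim}) and distinguished triangles, and since each connective subcategory is generated under these operations by the family of generators just identified, both containments follow. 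Composing the restricted equivalence with $ev_\Gm$ and invoking Lemma~\ref{evforgetllemma} then yields the theorem.

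The main obstacle is the verification that the two connective subcategories are generated, under small coproducts and extensions, by the explicit families of generators named above. For $\SH(k)_{\bb Q,\geq 0}$ this rests on Morel's connectivity theorem and the standard description of the homotopy $t$-structure. For $DM_{\widetilde{\mathrm{Cor}},\bb Q,\geq 0}$ it reduces to the observation that weightwise connectivity of a $\Gm$-spectrum of chain complexes can be tested on the Nisnevich homology sheaves in each weight, combined with the generation of connective chain complexes of sheaves by representable objects placed in nonnegative degrees. Once these two generation statements are in hand, the rest of the argument is entirely formal.
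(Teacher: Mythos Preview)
Your approach is essentially the same as the paper's: factor $ev_{S^1,\Gm}$ as $\mathcal U\circ ev_{\Gm}$ via Lemma~\ref{evforgetllemma}, invoke Theorem~\ref{geqtheorem} for the first factor, and Garkusha's reconstruction theorem for the second. The paper simply asserts that the equivalence $\mathcal U:DM_{\widetilde{\mathrm{Cor}},\bb Q}\xrightarrow{\sim}\SH(k)_{\bb Q}$ ``implies'' the restricted equivalence on connective parts, without further comment; you have correctly identified this as the only step requiring justification and supplied a reasonable strategy.

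Your generator argument is sound in outline, but it is more work than necessary and the generation statement for $DM_{\widetilde{\mathrm{Cor}},\bb Q,\geq 0}$ by the $\motive{U}\wedge\Gmn{m}[n]$ is not entirely trivial (it is a statement about $\Gm$-spectra, not just about chain complexes in a single weight). A more direct route is available: the forgetful functor $\mathcal U$ is modelled by applying the Eilenberg--Mac Lane construction weightwise, so it takes a $\Gm$-spectrum whose weights are connective chain complexes to an $(S^1,\Gm)$-bispectrum whose $\Gm$-weights are connective $S^1$-spectra; such a bispectrum has $\underline{\pi}^{\bb A^1}_{p,q}=0$ for $p<q$ by direct inspection. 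Conversely, $\mathcal L$ being inverse to $\mathcal U$, the containment $\mathcal L(\SH(k)_{\bb Q,\geq 0})\subseteq DM_{\widetilde{\mathrm{Cor}},\bb Q,\geq 0}$ follows from the first containment together with the fact that $\mathcal U$ detects weightwise connectivity (again because it is computed weightwise). This bypasses the need to establish either generation statement.
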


\begin{proof}
	By Theorem \ref{geqtheorem} the $\Gm$-evalulation functor is an equivalence of categories
	$$ev_{\Gm}: \mathrm{Spc}_{\widetilde{\mathrm{Cor}}, \bb Q}[\Sm] \rightarrow DM_{\widetilde{\mathrm{Cor}},\bb Q, \geq 0} .$$
	By \cite[Theorem 5.5]{garkusha2019compositio} the forgetful functor
	$$\mathcal{U}: DM_{\widetilde{\mathrm{Cor}},\bb Q} \rightarrow \SH(k)_{\bb Q} $$
	is an equivalence of categories, and this implies that the forgetful functor
	$$\mathcal{U}: DM_{\widetilde{\mathrm{Cor}},\bb Q, \geq 0} \rightarrow \SH(k)_{\bb Q, \geq 0} $$
	is an equivalence of categories.
	So by Lemma \ref{evforgetllemma} the $(S^1,\Gm)$-evaluation functor
	$$ev_{S^1,\Gm}: \mathrm{Spc}_{\widetilde{\mathrm{Cor}}, \bb Q}[\Sm] \rightarrow \SH(k)_{\bb Q,\geq 0}$$
	is an equivalence of categories.
\end{proof}
Let $\SH^{\veff}(k)_{\bb Q}$ be the full subcategory of $\SH(k)_{\bb Q}$ on the very effective bispectra. Here an object $X \in SH(k)_{\bb Q}$ is said to be \textit{very effective} if it is both effective and connective:
$$\SH^{\veff}(k)_{\bb Q} = \SH^{\eff}(k)_{\bb Q} \cap \SH(k)_{\bb Q,\geq 0} .$$

\begin{dfn}
	We define $\mathrm{Spc}^{\veff}_{\widetilde{\mathrm{Cor}}, \bb Q}[\Sm]$, respectively $DM_{\widetilde{\mathrm{Cor}},\bb Q,\geq 0}^{\eff}$, to be the category $\mathrm{Spc}^{\veff}_{\Cor}[\Sm]$, respectively $DM_{\Cor,\geq 0}^{\eff}$, associated to the category of correspondences $\Cor = \widetilde{\mathrm{Cor}} \otimes \bb Q$.
	We call $\mathrm{Spc}^{\veff}_{\widetilde{\mathrm{Cor}}, \bb Q}[\Sm]$ the category of \textit{very effective rational enriched motivic $\widetilde{\mathrm{Cor}}$-spaces}.
\end{dfn}

We finish the paper with the following result stating that very effective rational enriched motivic  $\widetilde{\mathrm{Cor}}$-spaces recover $\SH^{\veff}(k)_{\bb Q}$.

\begin{thm}\label{finish}
	The $(S^1,\Gm)$-evaluation functor is an equivalence of categories
	$$ev_{S^1,\Gm}: \mathrm{Spc}^{\veff}_{\widetilde{\mathrm{Cor}}, \bb Q}[\Sm] \rightarrow \SH^{\veff}(k)_{\bb Q}.$$
\end{thm}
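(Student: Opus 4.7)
The plan is to refine the equivalence of Theorem \ref{thmSHgeq} to the very effective subcategories on both sides. Since Theorem \ref{thmSHgeq} already gives an equivalence
$$ev_{S^1,\Gm}: \mathrm{Spc}_{\widetilde{\mathrm{Cor}}, \bb Q}[\Sm] \xrightarrow{\sim} \SH(k)_{\bb Q,\geq 0},$$
and $\mathrm{Spc}^{\veff}_{\widetilde{\mathrm{Cor}}, \bb Q}[\Sm]$ and $\SH^{\veff}(k)_{\bb Q}$ are full subcategories of the source and target respectively, it suffices to show that a special rational enriched motivic $\widetilde{\mathrm{Cor}}$-space $\mathcal{X}$ is very effective in the sense of Definition \ref{suslindef} if and only if the bispectrum $ev_{S^1,\Gm}(\mathcal{X})$ lies in $\SH^{\veff}(k)_{\bb Q}$. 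Since $ev_{S^1,\Gm}(\mathcal{X})$ is automatically connective and $\SH^{\veff}(k)_{\bb Q} = \SH^{\eff}(k)_{\bb Q} \cap \SH(k)_{\bb Q,\geq 0}$, the task reduces to comparing very effectiveness of $\mathcal{X}$ with effectiveness of $ev_{S^1,\Gm}(\mathcal{X})$ inside $\SH(k)_{\bb Q}$.

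The comparison will be handled by Proposition \ref{effcomparsion}. The category $\widetilde{\mathrm{Cor}} \otimes \bb Q$ has framed correspondences and satisfies the $\widehat{\Delta}$-property, both properties being inherited from $\widetilde{\mathrm{Cor}}$ under rationalization since the defining conditions of Definition \ref{framedCorDef} are preserved by tensoring morphism groups with $\bb Q$. Hence Proposition \ref{effcomparsion} applies and gives that $\mathcal{X}$ is very effective if and only if the framed bispectrum $ev_{S^1,\Gm}(\mathcal{X})$ is effective in the sense of \cite[Definition 3.5]{garkusha2018triangulated}. By \cite[Theorem 3.6]{garkusha2018triangulated}, this is equivalent to $ev_{S^1,\Gm}(\mathcal{X}) \in \SH^{\eff}(k)$, which for an object already lying in $\SH(k)_{\bb Q}$ is the same as $ev_{S^1,\Gm}(\mathcal{X}) \in \SH^{\eff}(k)_{\bb Q}$.

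Combining the two steps, the equivalence of Theorem \ref{thmSHgeq} identifies the full subcategory $\mathrm{Spc}^{\veff}_{\widetilde{\mathrm{Cor}}, \bb Q}[\Sm]$ with the full subcategory $\SH^{\veff}(k)_{\bb Q}$, which yields the desired equivalence $ev_{S^1,\Gm}: \mathrm{Spc}^{\veff}_{\widetilde{\mathrm{Cor}}, \bb Q}[\Sm] \to \SH^{\veff}(k)_{\bb Q}$. The main subtle point, and hence the principal potential obstacle, is making sure the Garkusha--Panin characterization of effective framed bispectra from \cite[Theorem 3.6]{garkusha2018triangulated} passes cleanly through the rationalization; this is resolved by noting that the criterion is purely about (stable motivic homotopy sheaves of) framed bispectra, which are compatible with $-\otimes\bb Q$. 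With that in hand, the remainder of the argument is formal bookkeeping built on Theorem \ref{thmSHgeq} and Proposition \ref{effcomparsion}.
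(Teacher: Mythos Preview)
Your proposal is correct and follows essentially the same route as the paper: start from the equivalence of Theorem~\ref{thmSHgeq}, reduce to checking that very effectiveness of $\mathcal{X}$ matches effectiveness of $ev_{S^1,\Gm}(\mathcal{X})$, and settle this via Proposition~\ref{effcomparsion} together with \cite[Theorem~3.6]{garkusha2018triangulated}. Your added remarks on why $\widetilde{\mathrm{Cor}}\otimes\bb Q$ inherits framed correspondences and on the compatibility with rationalization are reasonable elaborations, though the paper's proof does not spell these out (and the $\widehat{\Delta}$-property is not actually needed for Proposition~\ref{effcomparsion}).
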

\begin{proof}
	By Theorem \ref{thmSHgeq} the $(S^1,\Gm)$-evaluation functor is an equivalence of categories
	$$ev_{S^1,\Gm}: \mathrm{Spc}_{\widetilde{\mathrm{Cor}}, \bb Q}[\Sm] \rightarrow \SH(k)_{\bb Q,\geq 0}.$$
	We want to show that it restricts to an equivalence of categories
	$$ev_{S^1,\Gm}: \mathrm{Spc}^{\veff}_{\widetilde{\mathrm{Cor}}, \bb Q}[\Sm] \rightarrow \SH^{\veff}(k)_{\bb Q}.$$
	For this we just need to show that a special enriched motivic $\Cor$-space $\mathcal{X}$ is very effective if and only if $ev_{S^1,\Gm}(\mathcal{X})$ is very effective in $SH(k)$.
	
	According to Proposition \ref{effcomparsion} the special enriched motivic $\Cor$-space $\mathcal{X}$ is very effective if and only if the framed bispectrum $ev_{S^1,\Gm}(\mathcal{X})$ is effective in $SH(k)^{fr}_{\nis}$. By \cite[Theorem 3.6]{garkusha2018triangulated} this is the case if and only if $ev_{S^1,\Gm}(\mathcal{X})$ is effective in $SH(k)$.
	This concludes the proof of the theorem.
\end{proof}

\end{document}